\title
{Cyclic and alternating $U$-statistics}
\date{14 October, 2025}
\author{Svante Janson}
\thanks{Supported by the Knut and Alice Wallenberg Foundation
and
the Swedish Research Council
}
\address{Department of Mathematics, Uppsala University, PO Box 480,
SE-751~06 Uppsala, Sweden}
\email{svante.janson@math.uu.se}
\newcommand\urladdrx[1]{{\urladdr{\def~{{\tiny$\sim$}}#1}}}
\subjclass[2020]{} 
\numberwithin{equation}{section}
\renewcommand\le{\leqslant}
\renewcommand\ge{\geqslant}
\theoremstyle{plain}
\newtheorem{theorem}{Theorem}[section]
\newtheorem{lemma}[theorem]{Lemma}
\newtheorem{proposition}[theorem]{Proposition}
\newtheorem{corollary}[theorem]{Corollary}
\theoremstyle{definition}
\newcommand\xqed[1]{%
    \leavevmode\unskip\penalty9999 \hbox{}\nobreak\hfill
    \quad\hbox{#1}}
\newtheorem{exampleqqq}[theorem]{Example}
\newenvironment{example}{\begin{exampleqqq}}
  {\xqed{$\triangle$}\end{exampleqqq}}
\newtheorem{remarkqqq}[theorem]{Remark}
\newenvironment{remark}{\begin{remarkqqq}}
  {\xqed{$\triangle$}\end{remarkqqq}}
\newtheorem{problem}[theorem]{Problem}
\theoremstyle{remark}
\newcounter{dummy}
\newcommand\myitem[1][]{\item[#1]\refstepcounter{dummy}\def\@currentlabel{#1}}
\newenvironment{romenumerate}[1][-10pt]{
\addtolength{\leftmargini}{#1}\begin{enumerate}
 }{\end{enumerate}}
\newcounter{oldenumi}
{\setcounter{oldenumi}{\value{enumi}}
\begin{romenumerate} \setcounter{enumi}{\value{oldenumi}}}
{\end{romenumerate}}
\newcounter{thmenumerate}
\newcounter{xenumerate}   
\newcommand\pfitemx[1]{\par#1:}
\newcommand\pfitemref[1]{\pfitemx{\ref{#1}}}
\newcommand{\refT}[1]{Theorem~\ref{#1}}
\newcommand{\refTs}[1]{Theorems~\ref{#1}}
\newcommand{\refC}[1]{Corollary~\ref{#1}}
\newcommand{\refCs}[1]{Corollaries~\ref{#1}}
\newcommand{\refL}[1]{Lemma~\ref{#1}}
\newcommand{\refLs}[1]{Lemmas~\ref{#1}}
\newcommand{\refR}[1]{Remark~\ref{#1}}
\newcommand{\refRs}[1]{Remarks~\ref{#1}}
\newcommand{\refS}[1]{Section~\ref{#1}}
\newcommand{\refSs}[1]{Sections~\ref{#1}}
\newcommand{\refSS}[1]{Section~\ref{#1}}
\newcommand{\refP}[1]{Proposition~\ref{#1}}
\newcommand{\refE}[1]{Example~\ref{#1}}
\newcommand{\refApp}[1]{Appendix~\ref{#1}}
\newcommand\marginal[1]{\marginpar[\raggedleft\tiny #1]{\raggedright\tiny#1}}
\newcommand\kolla{\marginal{KOLLA!} }
\newcommand\REM[1]{{\raggedright\texttt{[#1]}\par\marginal{XXX}}}
\newcommand\XREM[1]{\relax}
\xdef\klockan{\the\count1.0\the\count255}
\xdef\klockan{\the\count1.\the\count255}\fi
\newcommand{\sumk}{\sum_{k=1}^\infty}
\newcommand{\sumkk}{\sum_{k=-\infty}^\infty}
\newcommand{\sumin}{\sum_{i=1}^n}
\newcommand{\sumjn}{\sum_{j=1}^n}
\newcommand{\sumrR}{\sum_{r=1}^R}
\newcommand{\sumqQp}{\sum_{q=1}^\Qp}
\newcommand{\prodk}{\prod_{k=1}^\infty}
\newcommand\set[1]{\ensuremath{\{#1\}}}
\newcommand\Bigset[1]{\ensuremath{\Bigl\{#1\Bigr\}}}
\newcommand\xpar[1]{(#1)}
\newcommand\bigpar[1]{\bigl(#1\bigr)}
\newcommand\Bigpar[1]{\Bigl(#1\Bigr)}
\newcommand\biggpar[1]{\biggl(#1\biggr)}
\newcommand\lrpar[1]{\left(#1\right)}
\newcommand\bigsqpar[1]{\bigl[#1\bigr]}
\newcommand\sqpar[1]{[#1]}
\newcommand\Bigsqpar[1]{\Bigl[#1\Bigr]}
\newcommand\cpar[1]{\{#1\}}
\newcommand\Bigabs[1]{\Bigl\lvert#1\Bigr\rvert}
\newcommand\lrabs[1]{\left\lvert#1\right\rvert}
\def\rompar(#1){\textup(#1\textup)}    
\newcommand\Bigparfrac[2]{\Bigpar{\frac{#1}{#2}}}
\newcommand\innprod[1]{\langle#1\rangle}
\def\xexp(#1){e^{#1}}
\newcommand\floor[1]{\lfloor#1\rfloor}
\newcommand\lrfloor[1]{\left\lfloor#1\right\rfloor}
\newcommand\setn{\set{1,\dots,n}}
\newcommand\ntoo{\ensuremath{{n\to\infty}}}
\newcommand\Ntoo{\ensuremath{{N\to\infty}}}
\newcommand\norm[1]{\lVert#1\rVert}
\newcommand\punkt{\xperiod}    
\newcommand\iid{i.i.d\punkt}    
\newcommand\eg{e.g\punkt}
\newcommand\cf{cf\punkt}
\newcommand\ii{\mathrm{i}}
\newcommand{\tend}{\longrightarrow}
\newcommand\dto{\overset{\mathrm{d}}{\tend}}
\newcommand\pto{\overset{\mathrm{p}}{\tend}}
\newcommand\asto{\overset{\mathrm{a.s.}}{\tend}}
\newcommand\eqd{\overset{\mathrm{d}}{=}}
\newcommand\op{o_{\mathrm p}}
\newcommand\bbR{\mathbb R}
\newcommand\bbC{\mathbb C}
\newcommand\bbN{\mathbb N}
\newcommand\bbZ{\mathbb Z}
\newcounter{CC}
\newcounter{cc}
\newcommand\E{\operatorname{\mathbb E}{}} 
\newcommand\Var{\operatorname{Var}}
\newcommand\Cov{\operatorname{Cov}}
\newcommand\Be{\operatorname{Be}}
\newcommand\sgn{\operatorname{sgn}}
\newcommand\ga{\alpha}
\newcommand\gb{\beta}
\newcommand\gd{\delta}
\newcommand\gf{\varphi}
\newcommand\gF{\gf}
\newcommand\gG{\Gamma}
\newcommand\gk{\varkappa}
\newcommand\gl{\lambda}
\newcommand\go{\omega}
\newcommand\gs{\sigma}
\newcommand\gss{\sigma^2}
\renewcommand\phi{\xxx}  
\newcommand\cA{\mathcal A}
\newcommand\cB{\mathcal B}
\newcommand\cE{\mathcal E}
\newcommand\cN{\mathcal N}
\newcommand\cQ{\mathcal Q}
\newcommand\cR{{\mathcal R}}
\newcommand\cS{{\mathcal S}}
\newcommand\cX{{\mathcal X}}
\newcommand\cY{{\mathcal Y}}
\newcommand\cZ{{\mathcal Z}}
\newcommand\tU{{\tilde U}}
\newcommand\tX{{\widetilde X}}
\newcommand\indic[1]{\boldsymbol1\cpar{#1}}
\newcommand\etta{\boldsymbol1} 
\newcommand\smatrixx[1]{\left(\begin{smallmatrix}#1\end{smallmatrix}\right)}
\newcommand\matrixx[1]{\begin{pmatrix}#1\end{pmatrix}}
\newcommand\qw{^{-1}}
\newcommand\qq{^{1/2}}
\newcommand\qqw{^{-1/2}}
\newcommand\intoi{\int_0^1}
\newcommand\oi{\ensuremath{[0,1]}}
\newcommand\ooo{[0,\infty)}
\newcommand\dd{\,\mathrm{d}}
\newcommand{\chf}{characteristic function}
\newcommand\lhs{left-hand side}
\newcommand\rhs{right-hand side}
\newcommand\xoo{_1^\infty}
\newcommand\xoon{_1^n}
\newcommand\fS{\mathfrak{S}}
\newcommand\Uc{U^\circ}
\newcommand\Ucc{\widetilde U^\circ}
\newcommand\Ustat{$U$-statistic}
\newcommand\Upm{U^{+-}}
\newcommand\Ump{U^{-+}}
\newcommand\Umm{U^{--}}
\newcommand\xf{f^*}
\newcommand\hxf{{\widehat{\xf}}}
\newcommand\UU{\bar{U}}
\newcommand\SO{S^{(\emptyset)}}
\newcommand\SI{S^{(1)}}
\newcommand\SII{S^{(2)}}
\newcommand\SQ{S^{(12)}}
\newcommand\LLR{L^2_{\bbR}}
\newcommand\LLC{L^2_{\bbC}}
\newcommand\XX{X}
\newcommand\hcX{\widehat{\cX}}
\newcommand\hx{\widehat x}
\newcommand\hy{\widehat y}
\newcommand\hX{\widehat X}
\newcommand\hnu{\widehat\nu}
\newcommand\hf{{\widehat{f}}}
\newcommand\hF{{\widehat{F}}}
\newcommand\czf{{\widecheck{f}}}
\newcommand\Leb{\ell}
\newcommand\fmu{{f-\mu}}
\newcommand\hfmu{{\hf-\mu}}
\newcommand\OLL{O_{L^2}}
\newcommand\oLL{o_{L^2}}
\newcommand\flfrac[2]{\floor{\frac{#1}{#2}}}
\newcommand\lrflfrac[2]{\lrfloor{\frac{#1}{#2}}}
\newcommand\flfracnn{\flfrac{n}{2}}
\newcommand\lrflfracnn{\lrflfrac{n}{2}}
\newcommand\MM{M_1\oplus M_2}
\newcommand\fs{{f_{\mathsf s}}}
\newcommand\fa{{f_{\mathsf a}}}
\newcommand\fiiis{{f_{12\mathsf s}}}
\newcommand\fiiia{{f_{12\mathsf a}}}
\newcommand\gls{\gl^{\mathsf s}}
\newcommand\gla{\gl^{\mathsf a}}
\newcommand\gll{\gla}
\newcommand\cQp{{\cQ_+}}
\newcommand\Qp{{Q_+}}
\newcommand\hfiii{{\widehat{f_{12}}}}
\newcommand\czfiii{{\widecheck{f_{12}}}}
\newcommand\xfiii{{(f_{12})^*}}
\newcommand\gsse{\gss_{\mathsf e}}
\newcommand\gsso{\gss_{\mathsf o}}
\newcommand\tzeta{\tilde{\zeta}}
\newcommand\bgs{\boldsymbol{\gs}}
\newcommand\tensor{\otimes}
\newcommand\setiii{\set{1,2}}
\newcommand\xeta{\vartheta}
\newcommand\Hs{{H_{\mathsf s}}}
\newcommand\Ha{{H_{\mathsf a}}}
\newcommand\Ws{{W_{\mathsf s}}}
\newcommand\Wa{{W_{\mathsf a}}}
\newcommand\mpp{measure-preserving}
\newcommand\weta{\widetilde\eta}
\newcommand\wxeta{\widetilde\xeta}
\newcommand\goo{\go_0}
\newcommand\st{_{s,\tau}}
\newcommand\hfst{{\widehat{f\st}}}
\newcommand\CS{Cauchy--Schwarz}
\newcommand\CSineq{\CS{} inequality}
\newcommand{\Levy}{L\'evy}
\newcommand{\Cramer}{Cram\'er}
\begin{document}

\begin{abstract} 
We define \emph{cyclic $U$-statistics}
as a variant of $U$-statistics based on variables $X_1,\dots,X_n$
that are assumed to be cyclically ordered.
We also define \emph{alternating $U$-statistics} where in the definition
terms are summed with alternating sings (in three different ways).
Only $U$-statistics of order 2 are considered.
The definitions are inspired by special cases studied by 
Chebikin (2008) and Even-Zohar (2017) for random permutations.

We show limit theorems similar to well-known results for standard
$U$-statistics, but with some differences between the different versions.
In particular, we find both ``nondegenerate'' normal limits and
``degenerate'' non-normal limits.
\end{abstract}

\maketitle

\section{Introduction}\label{S:intro}

$U$-statistics were introduced by \citet{Hoeffding} as statistics of the
form
\begin{align}
  \label{U1}
\UU_n =\UU_n(f)=\UU_n(f;X_1,\dots,X_n):= \sum f(X_{i_1},\dots,X_{i_m}) 
\end{align}
where 
$m$ (the \emph{order} of the $U$-statistic) and $n$ are integers with
$1\le m\le n$,
the sum is over all $(n)_m=n!/(n-m)!$ different 
$m$-tuples $i_1,\dots,i_m$ of distinct
indices in $\setn$,
$X_1,\dots,X_n$ is a sequence of random variables taking values in
some measurable space $\cX$,
and
$f:\cX^m\to \bbR$ is a measurable function, called the \emph{kernel} of the
\Ustat. 
(\citet{Hoeffding} and many later authors
include in the definition 
a normalization factor $1/(n)_m$; this is often convenient, but in the
present paper we choose to omit such factors in the definitions.)
The random variables $X_i$ are usually assumed to be independent and
identically distributed (\iid), and this will  be assumed in the
present paper.
We will also assume that the kernel $f$ is square integrable
in the sense
$\E |f(X_1,\dots,X_m)|^2<\infty$,
which we write as
$f\in L^2=L^2(\cX^m)$.

In the definition \eqref{U1}, the order of the variables $X_1,\dots,X_n$
does not matter; in other words, the indices $1,\dots,n$ are used for
labelling but their order does not matter and any other labels could be used.
Another definition of $U$-statistics where the order of the variables matters
is obtained by summing only over increasing sequences $i_1<\dots<i_m$:
\begin{align}
  \label{U2}
U_n =U_n(f)=U_n(f;X_1,\dots,X_n):= \sum_{i_1<\dots<i_m} f(X_{i_1},\dots,X_{i_m}).
\end{align}
Note that if $f$ is symmetric, as is often assumed,
the definitions \eqref{U1} and \eqref{U2}
differ only by an unimportant factor $m!$. In fact, $\UU_n$ in
\eqref{U1} remains the same if $f$ is replaced by its symmetrization;
hence we may as well assume that $f$ in \eqref{U1} is symmetric, and
therefore \eqref{U1} can be seen as a special case of the more general
\eqref{U2}. (Although many applications use only the symmetric version
\eqref{U1}, or equivalently \eqref{U2} with a symmetric kernel $f$,
there are also many applications that require the asymmetric version
\eqref{U2}.)
The literature on \Ustat{s} and applications of them is enormous, and we
will in the sequel only give a few relevant references.

In the present paper we consider 
four modifications of the definitions above, 
defined in the following two subsections. 
As in \eqref{U1} and \eqref{U2}, we may include $f$ and the variables $X_i$
in the notation, but we often omit them when they are clear from the context.
Our main results are theorems on the asymptotic distribution of these
modifications, stated in \refSs{SUc}--\ref{SA} and summarized for
convenience in \refS{SSummary}.
The results are similar to the well-known results for classical \Ustat{s},
which we state for comparison in \refS{SU}, 
although the details in the limit theorems differ between
the different versions.
In particular, in the classical case $U_n(f)$ there is a well-known
dichotomy of the kernels $f$
into a nondegenerate case with variance of order $n^3$ and
an asymptotically normal distribution,
and a degenerate case with a variance of order $n^2$ only
and with a non-normal limit distribution; 
all but one of the versions studied here
exhibit the same two cases, but not necessarily for the same kernels;
however, for one version (\refT{TB}) there is no ``nondegenerate'' case.
Furthermore, for some versions there is an exceptional third, futher
degenerate and almost trivial case, with variance of order $n$ and again a
normal limit distribution.
In the degenerate case, for all versions, 
the limit distribution can be expressed as a
(possibly infinite) linear combination of centred squares of independent
standard normal variables, where the coefficients are the eigenvalues of a
certain integral operator with kernel derived from the kernel $f$ of the
\Ustat, although again the details differ between the differnt versions;
see \eg{} \eqref{tu2}.
(From an abstract point of view, the limit distribution is a Wiener chaos of
order 2, see \eg{} \cite[in particular Theorem 6.1]{SJIII}.)

Some simple examples are given in \refS{Sex}; this includes applications to
the \emph{writhe} and \emph{alternating inversion number} of a uniformly
random permutation, previously defined and studied in \cite{writhe} and
\cite{Chebikin}. These two examples were the inspiration of the definitions
and results in the present paper.

Some further results and open problems are given in \refS{Sfurther}.

\refApp{AA} contains the proof of the well-known \refT{TU} for
classical \Ustat{s}, included for completeness; we also reuse parts of the
appendix in other proofs.
\refApp{Acumulants} collects some formulas for cumulants.
\refApp{AYor} gives further calculations for one example from \refS{Sex}.

As mentioned above, in the degenerate cases, the asymptotic distribution is
described by the eigenvalues of an integral operators.
Consequently, some proofs and most examples require finding such eigenvalues; 
this is straightforward in our cases, but we include details
for completeness.

\subsection{Cyclic \Ustat{s}}
For this version, we
regard the indices $1,\dots,n$ as
circularly ordered instead of linearly ordered.
We regard the indices as elements of the cyclic group $\bbZ_n=\bbZ/n\bbZ$;
we therefore assume again that $X_1,\dots,X_n$ are \iid{} random variables, and
extend the notation to $X_i$ for all $i\in\bbZ$ by $X_i:=X_j$ if $i\equiv j
\pmod n$ and $j\in\setn$.

We consider only the case $m=2$ and then define the \emph{cyclic $U$-statistic}
\begin{align}\label{Uc}
  \Uc_n=\Uc_n(f):=\sum_{i\in\bbZ_n}\sum_{1\le j < n/2} f(X_i,X_{i+j}).
\end{align}
Note that if we regard the elements of $\bbZ_n$ as lying on a circle in the
natural way, then for any pair of distinct $i,j\in\bbZ_n$, the sum
\eqref{Uc} contains the term $f(X_i,X_j)$ if the shortest path from $i$ to $j$
goes in the positive direction, and the term $f(X_j,X_i)$ if the shortest path
goes in the negative direction; for even $n$, no terms $f(X_i,X_{i+n/2})$
appears at all.

One example of a cyclic \Ustat{} appears in a paper by \citet{writhe}
where he studies the \emph{writhe} of permutations and framed knots,
see \refE{Ewrithe} for details. 
In particular, \cite{writhe} studies the writhe of a uniformly random
permutation 
and finds its asymptotic distribution.
The writhe of a uniformly random permutation can be written as 
a cyclic \Ustat{} \eqref{Uc}
(see \refE{Ewrithe} again),
and this example is the motivation for the present paper, where 
we study general cyclic \Ustat{s} (assuming only that $f\in L^2$) 
and prove general limit theorems; in particular, this
gives an alternative proof of the limit theorem by \citet{writhe}
(where the theorem is proved by quite different methods).

\begin{remark}\label{Rsymm}
  If $f(x,y)$ is a symmetric function, then we have
  \begin{align}
    \Uc_n=
    \begin{cases}
      U_n, & n \text{ is odd}
\\
     U_n - \sum_{i=1}^{n/2}f(X_i,X_{i+n/2}) & n \text{ is even},
    \end{cases}
  \end{align}
where it is easily seen that the sum appearing for even $n$ is
asymptotically negligible.
(If $f\in L^2$, this sum has variance $O(n)$ since its terms are \iid, 
while $U_n$ except in trivial
cases has variance of order at least $n^2$.)
Hence, we do not really obtain anything new for symmetric $f$.
The main interest seems to be in the opposite case, when $f$ is
antisymmetric. We will in the sequel
give  special attention to the cases of symmetric and antisymmetric kernels.

Note that every $f$ may be decomposed as a sum of a symmetric and an
antisymmetric part; hence a $U$-statistic of order $m=2$
(of any of the versions studied here)
can be written as a sum of a symmetric and an antisymmetric $U$-statistic;
We will see in \refT{TC} and \refR{Rdecouple} that for the cyclic \Ustat{}
$\Uc_n$, the
two parts are asymptotically independent, and thus can be treated separately.
However, for the other \Ustat{s} considered here, including the classical
$U_n$,
this decomposition is of limited value since the two parts typically are
dependent, also asymptotically. 
For an interesting example of this, largely taken from \cite{SJ22}, see
\refE{ESJ22}.
\end{remark}

\begin{remark}\label{RUcc}
  A variation of the definition \eqref{Uc} is the more symmetrical
\begin{align}\label{Ucc}
  \Ucc_n(f):=\sum_{i\in\bbZ_n}\sum_{1\le j < n/2} f(X_i,X_{i+j})
-\sum_{i\in\bbZ_n}\sum_{1\le j < n/2} f(X_i,X_{i-j}).
\end{align}
However, replacing $i$ by $i+j$ in the second sum yields
\begin{align}\label{Ucc2}
  \Ucc_n(f)=\sum_{i\in\bbZ_n}\sum_{1\le j < n/2} 
\bigpar{f(X_i,X_{i+j})-f(X_{i+j},X_i)}=\Uc_n(g),
\end{align}
where $g(x,y):=f(x,y)-f(y,x)$. 
Hence, cyclic $U$-statistics of the form \eqref{Ucc}
are special cases of $\Uc_n$, and therefore need not be considered further.
Note that the function $g$ arising here always is antisymmetric;
conversely, if $f$ is antisymmetric, then \eqref{Ucc2} implies
$\Uc_n(f)=\Ucc_n(\frac12f)$.
Consequently, the version $\Ucc_n$ is 
equivalent to the special case of $\Uc_n$ for antisymmetric kernels only.
\end{remark}

\subsection{Alternating \Ustat{s}}
One of the results in \cite{writhe} is that the writhe of a uniformly random
permutation has the same distribution as the 
\emph{bi-alternating inversion number}, which is defined in \cite{writhe} 
in analogy to the \emph{alternating inversion number} defined in
\cite{Chebikin}, see \refE{Einv};
it is also shown in \cite{writhe} that the alternating and
bi-alternating inversion numbers of a uniformly random permutation,
although similarly defined,
have quite different asymptotic distributions. 

These numbers are defined as versions of the usual inversion number of a
permutation.
It is well-known that the usual inversion number of a uniformly random
permutation can be written as a \Ustat. Again we study 
corresponding modifications of general \Ustat{s}.
We consider again only the case of order $m=2$, so that \eqref{U2} becomes
\begin{align}\label{U3}
  U_n=U_n(f):=\sum_{1\le i<j\le n} f(X_i,X_j).
\end{align}
We then define the \emph{alternating \Ustat{s}}
\begin{align}
\Ump_n=\Ump_n(f)&:=\sum_{1\le i<j\le n} (-1)^{i+1}f(X_i,X_j),\label{U-+}
\\
  \Upm_n=\Upm_n(f)&:=\sum_{1\le i<j\le n} (-1)^{j}f(X_i,X_j),\label{U+-}
\\
  \Umm_n=\Umm_n(f)&:=\sum_{1\le i<j\le n} (-1)^{i+j}f(X_i,X_j).\label{U--}
\end{align}
We also call $\Umm_n$ \emph{bi-alternating}.
(As in  \cite{writhe} for a special case, see \refE{Ewrithe}.)

\begin{remark}
  Define
  \begin{align}\label{fi1}
    \xf(x,y):=f(y,x).
  \end{align}
Then, by replacing $X_i$ by $X_{n+1-i}$, it follows that
\begin{align}\label{fi2}
    \Upm_n(f)
&\eqd \sum_{1\le i<j\le n} (-1)^{j}f(X_{n+1-i},X_{n+1-j})
=
(-1)^{n}\sum_{1\le k<\ell\le n} (-1)^{k+1}f(X_{\ell},X_{k})
\notag\\&
=(-1)^{n}\Ump_n(\xf).
\end{align}
Consequently, up to a trivial change of sign and replacing $f$ by $\xf$,
$\Upm_n$ is the same as $\Ump_n$, and thus it suffices to consider the
latter.
On the other hand,
as noted in \cite{writhe} for the example in \refE{Einv}, 
we will see
that $\Umm$ in general is quite different.
\end{remark}

\section{Preliminaries}\label{Sprel}

\subsection{Some notation}

We assume throughout that $X_1,X_2,\dots$ are \iid{} random variables with
values in some measurable space $\cX$, 
and let $\nu$ be the common distribution of $X_i$.
We let $\XX$ denote any random variable with this distribution.
Thus $(\cX,\nu)$ is a probability space, which we for simplicity also
denote by $\cX$.

We define 
$\hcX:=\cX\times\oi$, where (as always below), $\oi$ is equipped with the
Lebesgue measure $\Leb$.

We assume also that $f:\cX^2\to\bbR$ is a given function such
that $f\in L^2(\cX^2)$, i.e., 
\begin{align}\label{a1}
\E|f(X_1,X_2)|^2=\int_{\cX\times\cX}|f(x,y)|^2\dd\nu(x)\dd\nu(y)<\infty.
\end{align}
We define
\begin{align}\label{a1mu}
  \mu:=\E f(X_1,X_2).
\end{align}

We will mainly consider real-valued functions, but to apply functional
analysis it will sometimes
be convenient to also consider complex-valued fuctions.
When it is necessary to distinguish them, we use $\LLC(\cX)$ for the
complex Hilbert space of complex-valued functions $g$ on $\cX$ such that
$\int_{\cX}|g|^2\dd\nu<\infty$, and  
$\LLR(\cX)$ for the subspace of real-valued functions (this is 
a real Hilbert space).

All functions are assumed to be measurable.
We will sometimes omit ``a.s.''\ or ``a.e.''\ when this is obvious.

For a function $g:\cX\times\cX\to\bbC$, 
let $T_g$ denote the integral operator on $L^2(\cX)$ defined by 
\begin{align}\label{Tg}
  T_g\gf(x) := \int_{\cX}g(x,y)\gf(y)\dd\nu(y),
\qquad \gf\in L^2(\cX).
\end{align}
We will only consider the case when
$g\in L^2(\cX\times\cX)$; it is well-known that then,
for every $h\in L^2(\cX)$,
the integral in \eqref{Tg} converges for a.e.\ $x$ 
and that $T_gh\in L^2(\cX)$,
so that $T_g$ is well-defined, 
and furthermore that $T_g$ is a Hilbert--Schmidt operator on $L^2(\cX)$,
and thus in particular compact (and bounded).
(See \eg{} \cite[Proposition II.4.7 and Exercise  IX.2.19]{Conway}.)
We will also use the notation \eqref{Tg} for other measure spaces, in
particular $\cX^2=\cX\times\cX$ and
$\hcX=\cX\times\oi$.

Recall that a bounded operator $T$ on a Hilbert space $H$ is
\emph{self-adjoint}  (also called \emph{Hermitian}, or \emph{symmetric})
if $\innprod{Th,k}=\innprod{h,Tk}$ for all $h,k\in H$. It is easily seen
that if $g$ is real-valued and symmetric (i.e., $g(x,y)=g(y,x)$), then $T_g$
is self-adjoint.

Eigenvalues of operators are always counted with multiplicities; sets of
eigenvalues are thus in general really multisets.

We will occasionally
also use tensor notation for functions and operators.
If $g$ and $h$ are (real- or complex-valued) functions defined on measure 
spaces $\cY$ and $\cZ$, then
$g\tensor h$ denotes the function $(y,z)\mapsto g(y)h(z)$ defined on
$\cY\times\cZ$. 
It is well known that if $(\gf_\ga)_{\ga\in\cA}$ and
$(\psi_\gb)_{\gb\in\cB}$
are orthonormal bases in $L^2(\cY)$ and $L^2(\cZ)$,
then
$(\gf_\ga\tensor \psi_\gb)_{\ga\in\cA,\gb\in\cB}$ is an orthonormal basis in
$L^2(\cY\times\cZ)$.
If $g\in L^2(\cY\times\cY)$, and $h\in L^2(\cZ\times\cZ)$, then
$g\tensor h$ can be regarded as a kernel on $\cY\times\cZ$, and it is easily
seen that
\begin{align}\label{tensor}
T_{g\tensor h}(\gf\tensor \psi) = (T_g\gf)\tensor(T_h\psi),
\qquad 
\gf\in L^2(\cY), \psi\in L^2(\cZ).   
\end{align}
We write also $T_g \tensor T_h :=T_{g\tensor h}$.
(This is a special case of tensor products of linear operators on Hilbert,
or more general, spaces, but we have no need for the general theory.)
It follows from \eqref{tensor} that
if $\gf$ is an eigenfunction of $T_g$ with eigenvalue $\gl$ and
$\psi$ is an eigenfunction of $T_h$ with eigenvalue $\rho$,
then $\gf\tensor \psi$ is an eigenfunction of 
$T_g\tensor T_h=T_{g\tensor h}$ with eigenvalue $\gl\rho$.

For a function $f$ on $\cX\times\cX$, we define
its \emph{symmetric} and \emph{antisymmetric parts} by,
recalling \eqref{fi1}, 
\begin{align}\label{fsa}
  \fs:=\tfrac12(f+\xf)
\qquad\text{and}\qquad
\fa:=\tfrac12(f-\xf).
\end{align}
Thus $f=\fs+\fa$.
We also define (as another form of symmetrization)
the symmetric function $\hf$ on $\hcX\times\hcX$ by
(recall that $\hcX:=\cX\times\oi)$
\begin{align}\label{hff}
  \hf\bigpar{(x,t),(y,u)}&
:=
  \begin{cases}
    f(x,y), & t< u,
\\
    f(y,x), & t> u.
  \end{cases}
\end{align}
(For completeness, we may define $\hf\bigpar{(x,t),(y,u)}:=0$ when $t=u$;
this case has measure 0 and is therefore irrelevant.)
Note that, with $\hnu:=\nu\times\Leb$,
\begin{align}\label{ff}
&\int_{\hcX^2}|\hf(\hx,\hy)|^2\dd\hnu(\hx)\dd\hnu(\hy)
\notag\\&\qquad
=\int_{\oi^2}\int_{\cX^2}|\hf\bigpar{(x,t),(y,u)}|^2
 \dd\nu(x)\dd\nu(y)\dd t\dd u
\notag\\&\qquad
=\int_{\oi^2}\int_{\cX^2}|f(x,y)|^2\dd\nu(x)\dd\nu(y)\dd t\dd u
=\int_{\cX^2}|f(x,y)|^2\dd\nu(x)\dd\nu(y)
.\end{align}

In some cases (when studying $\Ump$ and $\Upm$) we need also the
corresponding antisymmetric function on $\hcX\times\hcX$ defined by 
\begin{align}\label{czf}
  \czf\bigpar{(x,t),(y,u)}&
:=
  \begin{cases}
    f(x,y), & t< u,
\\
    -f(y,x), & t> u.
  \end{cases}
\end{align}

For a real number $x$, we let $\floor{x}$ be $x$ rounded down to the nearest
integer. 
The complex unit is denoted $\ii$. (This should not be confused with $i$,
often used to denote indices.)
$\indic{\cE}$ denotes the indicator function of an event $\cE$.
The sign function $\sgn$ is given by
\begin{align}\label{sgn}
  \sgn(x):=
  \begin{cases}
    1,& x>0,\\
0,& x=0,\\
-1,& x<0.
  \end{cases}
\end{align}

We let $\dto$, $\pto$, and $\asto$ denote convergence of random variables 
in distribution, in probability, and almost surely, respectively.

Given a sequence $(a_n)_n$, we let 
$Y_n=\OLL(a_n)$ mean that $Y_n$ are random
variables such that
$\norm{Y_n}_{L^2}:=(\E [|Y_n|^2])\qq=O(a_n)$.
Similarly,
$Y_n=\oLL(a_n)$ means that 
$\norm{Y_n}_{L^2}=o(a_n)$, in other words that $Y_n/a_n\to 0 $ in $L^2$.

Unspecified limits are as \ntoo.

\subsection{Hoeffding's decomposition}
As said above, we assume $f\in L^2(\cX^2)$.
The basis of our work (as for many previous results for \Ustat{s})
is the orthogonal decomposition 
introduced (in the symmetric case) by
\citet{Hoeffding}. In the case $m=2$ treated here, 
the  orthogonal decomposition is:
\begin{align}\label{a2}
  f(x,y) = f_\emptyset + f_1(x) + f_2(y) + f_{12}(x,y)
\end{align}
where
\begin{align}\label{a3}
  f_\emptyset &:= \E f(X_1,X_2) = \int_{\cX\times\cX} f(x,y)\dd\nu(x)\dd\nu(y)
=\mu,
\\\label{a4}
  f_1(x) &:= \E f(x,\XX)-f_\emptyset = \int_{\cX} f(x,y)\dd\nu(y)-f_\emptyset,
\\\label{a5}
  f_2(y) &:= \E f(\XX,y)-f_\emptyset = \int_{\cX} f(x,y)\dd\nu(x)-f_\emptyset,
\\\label{a6}
f_{12}(x,y)&:=f(x,y)-f_1(x)-f_2(y)-f_\emptyset
.\end{align}
Equivalently, \eqref{a4}--\eqref{a6} can be written
\begin{align}
  f_1(X_1)&=\E [f(X_1,X_2)\mid X_1] - \E f(X_1,X_2),\label{a4E}
\\
  f_2(X_2)&=\E [f(X_1,X_2)\mid X_2] - \E f(X_1,X_2),\label{a5E}
\\
  f_{12}(X_1,X_2)&=f(X_1,X_2)-\E [f(X_1,X_2)\mid X_1] - \E [f(X_1,X_2)\mid X_2]
+\E f(X_1,X_2).\label{a6E}
\end{align}
Of course, \eqref{a6} makes \eqref{a2} trivial, but the point is that the 
four terms in the sum in \eqref{a2} are orthogonal in $L^2(\cX^2)$, which is
easily verified from the definitions \eqref{a3}--\eqref{a6E}, which imply
\begin{align}
  \E f_1(\XX)&=\E f_2(\XX)=0,\label{ai1}
\\
\E f_{12}(x,\XX)&=\E f_{12}(\XX,y)=\E f_{12}(X_1,X_2)=0. \label{ai2}
\end{align}

\begin{lemma}\label{LH}
  Let $(i,j)$ and $(k,l)$ be two pairs of indices with $i\neq j$, $k\neq l$,
  and $\set{i,j}\neq \set{k,l}$ 
(i.e., $(i,j)\neq(k,l)$ and $(i,j)\neq(l,k)$).
Then $f_{12}(X_i,X_j)$ and $f_{12}(X_k,X_l)$ are uncorrelated and thus
$\E\bigsqpar{f_{12}(X_i,X_j)f_{12}(X_k,X_l)}=0$.
\end{lemma}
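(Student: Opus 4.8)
The plan is to reduce the claim to the orthogonality relations \eqref{ai1}--\eqref{ai2} by conditioning on appropriate subsets of the variables. Since $X_1,X_2,\dots$ are \iid, the only dependence between $f_{12}(X_i,X_j)$ and $f_{12}(X_k,X_l)$ comes through shared indices, so I would split into cases according to $\abs{\set{i,j}\cap\set{k,l}}$, which under the hypothesis $\set{i,j}\neq\set{k,l}$ is either $0$ or $1$.

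First, consider the case $\set{i,j}\cap\set{k,l}=\emptyset$. Then $(X_i,X_j)$ and $(X_k,X_l)$ are independent, so $\E\bigsqpar{f_{12}(X_i,X_j)f_{12}(X_k,X_l)}=\E\bigsqpar{f_{12}(X_i,X_j)}\cdot\E\bigsqpar{f_{12}(X_k,X_l)}$, and each factor is $\E f_{12}(X_1,X_2)=0$ by \eqref{ai2}. Second, suppose the two pairs share exactly one index; by symmetry of the statement in the roles of the pairs and of the two entries within each pair, it suffices to treat one representative configuration, say $j=k$ (the other configurations $i=k$, $i=l$, $j=l$ are handled identically, possibly after relabelling). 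Write $Z:=X_j=X_k$ and note that $X_i$, $Z$, $X_l$ are independent. Conditioning on $Z$, the two factors become conditionally independent, so
\begin{align}
  \E\bigsqpar{f_{12}(X_i,Z)f_{12}(Z,X_l)\mid Z}
  = \E\bigsqpar{f_{12}(X_i,Z)\mid Z}\cdot\E\bigsqpar{f_{12}(Z,X_l)\mid Z}.
\end{align}
By \eqref{ai2}, for a.e.\ fixed value $z$ we have $\E f_{12}(\XX,z)=0$ and $\E f_{12}(z,\XX)=0$, hence each conditional expectation on the \rhs{} vanishes a.s.; taking expectations gives $\E\bigsqpar{f_{12}(X_i,X_j)f_{12}(X_k,X_l)}=0$. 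Since in all cases the product has mean $0$ and each factor has mean $0$ (again by \eqref{ai2}), the two variables are uncorrelated, as claimed.

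The only mild technical point — hardly an obstacle — is to make sure the conditional-expectation manipulations are justified in $L^2$; this is fine because $f\in L^2(\cX^2)$ implies $f_{12}\in L^2(\cX^2)$ by orthogonality in \eqref{a2}, so all the random variables involved are square-integrable and the products are integrable by \CS. The main "work" is simply bookkeeping the handful of index-overlap configurations and observing they all reduce, after conditioning on the shared variable, to one of the identities in \eqref{ai2}.
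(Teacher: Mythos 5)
Your proof is correct and is exactly the argument the paper has in mind: the paper dismisses the lemma as "a simple consequence of \eqref{ai2} and the standing assumption that $(X_i)$ are \iid", and your case analysis on the overlap $\abs{\set{i,j}\cap\set{k,l}}\in\set{0,1}$ together with conditioning on the shared variable is the standard way to spell that out. No gaps.
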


\begin{proof}
A simple consequence of \eqref{ai2} and the 
standing assumption that $(X_i)$ are \iid
\end{proof}

\begin{remark}\label{Rsymm2}
  If $f$ is symmetric, then $f_1=f_2$, and furthermore, $f_{12}$ is symmetric.
On the other hand, if $f$ is antisymmetric, then $f_\emptyset=0$, 
$f_1=-f_2$, and $f_{12}$ is antisymmetric.  
\end{remark}

\subsection{Three distributions}\label{SS3}
The limit distributions below will, apart from normal distributions,
be given by (possibly infinite) linear combinations of 
independent copies of the following three random variables.

\begin{romenumerate}
\item \label{DD1}
If $\zeta\in N(0,1)$, then $\zeta^2$ has a $\chi^2(1)=\gG(\frac12,2)$
distribution. We will use the centred variable $\zeta^2-1$, which has mean 0,
variance
\begin{align}\label{zz1}
  \Var\bigpar{\zeta^2-1}=2,
\end{align}
and \chf
\begin{align}\label{zz2}
  \E e^{\ii t(\zeta^2-1)} = e^{-\ii t}(1-2\ii t)\qqw,
\qquad t\in\bbR.
\end{align}

\item \label{DD2}
\Levy's \emph{stochastic area}, which we denote by $\eta$, is 
the stochastic integral
\begin{align}\label{SI}
\eta:=\intoi B_1(x)\dd B_2(x)-\intoi B_2(x)\dd B_1(x)  
\end{align}
where $B_1(x)$ and $B_2(x)$ are two independent Brownian motions.
See e.g.\ \cite{Levy}, \cite{Yor}, 
\cite[Theorem II.43 and its Corollary]{Protter}.
For us this background is not important; we only need that
the stochastic area is a random variable $\eta$ 
with the \chf
\begin{align}\label{area}
  \E e^{\ii t\eta} = \frac{1}{\cosh(t)},
\qquad t\in\bbR.
\end{align}
The stochastic area has variance, from \eqref{SI} or from \eqref{area},
\begin{align}\label{etavar}
  \Var\eta=1,
\end{align}
and density $\frac{1}{2\cosh(\pi x/2)}$, $-\infty<x<\infty$.

\item\label{DD3} 
Thirdly, we define $\xeta$ by $\xeta:=\intoi B_1(x)\dd B_2(x)$,
i.e., ``the first half of the stochastic area \eqref{SI}''.
This random variable has the \chf,
see \eg{} \cite[(1)]{Yor}
(or as a consequence of the calculations in \refE{ESJ22} and \eqref{larea3}
below). 
\begin{align}\label{xeta}
  \E e^{\ii\xeta}=\frac{1}{\cosh\qq(t)},
\end{align}
and hence variance
\begin{align}\label{xetavar}
  \Var\xeta=\tfrac12.
\end{align}
\end{romenumerate}

\begin{remark}
The \chf{} \eqref{area} of the difference of the two stochastic integrals
in \eqref{SI} thus equals the product of their \chf{s}, which both are
\eqref{xeta}; however, the two integrals are \emph{not independent}.
Their joint \chf{} is given by, see  \cite[(1)]{Yor},
\begin{multline}\label{yor}
\E \Bigsqpar{\exp
 \Bigpar{\ii s\intoi B_1(x)\dd B_2(x)+\ii t\intoi B_2(x)\dd B_1(x)}}    
\\
=\Bigpar{\cosh^2\Bigparfrac{s-t}2
+\Bigparfrac{s+t}{s-t}^2\sinh^2\Bigparfrac{s-t}2^2}\qqw.
\end{multline}
\end{remark}

\begin{remark}\label{Rsum}
We will frequently use   sums
$\sumrR\gl_r(\zeta_r^2-1)$ where 
$(\gl_r)_1^R$ is a finite or infinite sequence of real numbers with 
$\sumrR\gl_r^2<\infty$, and
$\zeta_r\in N(0,1)$ are independent.
Note that this sum converges in $L^2$ (and a.s.), and is thus well defined
also when $R=\infty$.
Furthermore,
it is easily seen from \eqref{zz2} that 
the sum $\sumrR\gl_r(\zeta_r^2-1)$ has a characteristic function whose square
extends to a meromorphic function in the complex plane with poles
(counted with multiplicity)
at the points $1/(2\ii\gl_r)$ and nowhere else.
Thus the distribution of the sum determines the coefficients $(\gl_r)_1^R$ 
(up to order). 

By the representations in \refL{Larea} below, the same holds for sums
$\sumrR\gl_r\eta_r$  and $\sumrR\gl_r\xeta_r$.

It follows also that the distribution of the sum is \emph{not} normal,
since its \chf{} is not entire. 
This alternatively follows by the \Levy--\Cramer{} theorem \cite{Cramer}.
\end{remark}

Obviously, if $\xeta_1$ and $\xeta_2$ are \iid{} with the distribution
\eqref{xeta}, then $\xeta_1+\xeta_2$ and $\xeta_1-\xeta_2$ have the
distribution \eqref{area}.
The following lemma shows further relations between 
variables in \ref{DD1}--\ref{DD3} above.
 
\begin{lemma}\label{Larea}
let  $\zeta_{k,j}\in N(0,1)$ be independent. Then
\begin{align}
  \label{larea0}
\wxeta:=\sum_{k=-\infty}^\infty\frac{1}{(2k-1)\pi} \bigpar{\zeta^2_{k,1}-1}
\eqd\sumk \frac{1}{(2k-1)\pi} \bigpar{\zeta^2_{k,1}-\zeta^2_{k,2}}
\end{align}
has the distribution of $\xeta$ in \eqref{xeta}, and
\begin{align}  \label{larea}
\weta:=\sumk \frac{1}{(2k-1)\pi} 
\bigpar{\zeta^2_{k,1}+\zeta^2_{k,2}-\zeta^2_{k,3}-\zeta^2_{k,4}}
\end{align}
has the  distribution of the stochastic area $\eta$ in \eqref{area}.
\end{lemma}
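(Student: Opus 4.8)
The plan is to compute characteristic functions and match them against \eqref{xeta} and \eqref{area}, using three ingredients already available: the \chf{} \eqref{zz2} of the centred chi-square variable (equivalently $\E e^{\ii s\zeta^2}=(1-2\ii s)\qqw$), the Euler product $\cosh t=\prod_{k=1}^\infty\bigl(1+\tfrac{4t^2}{(2k-1)^2\pi^2}\bigr)$, and the fact (Remark~\ref{Rsum}) that series of the type considered converge in $L^2$ and a.s.\ (the coefficients are square summable), so that the \chf{} of such a series is the limit of the corresponding finite partial products.

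First I would treat $\wxeta$. I would regroup the bi-infinite sum in \eqref{larea0} by pairing the index $k$ with $1-k$; since $2(1-k)-1=-(2k-1)$, the coefficients of this pair are $\pm\frac1{(2k-1)\pi}$, and after renaming the \iid{} normal $\zeta_{1-k,1}$ as $\zeta_{k,2}$ the constant shifts $-1$ cancel within each pair, giving the stated distributional identity $\wxeta\eqd\sum_{k\ge1}\frac1{(2k-1)\pi}\bigpar{\zeta_{k,1}^2-\zeta_{k,2}^2}$. For the \chf, by independence and \eqref{zz2},
\begin{align*}
\E e^{\ii t\wxeta}=\prod_{k\in\bbZ} e^{-\ii t/((2k-1)\pi)}\Bigpar{1-\tfrac{2\ii t}{(2k-1)\pi}}\qqw,
\end{align*}
interpreted as the limit of the symmetric partial products over $|k|\le N$. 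In each such partial product the exponential factors cancel in pairs because $\frac1{2k-1}+\frac1{1-2k}=0$, while the two remaining factors for the pair $\{k,1-k\}$ multiply to $\bigpar{1-\tfrac{2\ii t}{(2k-1)\pi}}\bigpar{1+\tfrac{2\ii t}{(2k-1)\pi}}=1+\tfrac{4t^2}{(2k-1)^2\pi^2}$; hence the product of their $(-1/2)$-powers is $\cosh(t)\qqw$ by the Euler product. So $\E e^{\ii t\wxeta}=\cosh(t)\qqw$, which is \eqref{xeta}, and therefore $\wxeta\eqd\xeta$.

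Then $\weta$ follows more simply, and I would give two routes. Directly: writing $a_k:=t/((2k-1)\pi)$, the two $+$ and two $-$ exponents make all exponential factors cancel, and the $k$-th factor becomes $(1-2\ii a_k)\qw(1+2\ii a_k)\qw=(1+4a_k^2)\qw=\bigpar{1+\tfrac{4t^2}{(2k-1)^2\pi^2}}\qw$; the product over $k\ge1$ is $\cosh(t)\qw$, which is \eqref{area}. Alternatively, $\weta$ decomposes visibly as $\sum_{k\ge1}\frac1{(2k-1)\pi}(\zeta_{k,1}^2-\zeta_{k,3}^2)+\sum_{k\ge1}\frac1{(2k-1)\pi}(\zeta_{k,2}^2-\zeta_{k,4}^2)$, a sum of two independent copies of $\sum_{k\ge1}\frac1{(2k-1)\pi}(\zeta_{k,1}^2-\zeta_{k,2}^2)\eqd\xeta$, so by the observation recorded just before the lemma ($\xeta_1+\xeta_2\eqd\eta$) we get $\weta\eqd\eta$.

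The only delicate point is the bi-infinite product for $\wxeta$: the series $\sum_{k\in\bbZ}\frac1{(2k-1)\pi}$ is merely conditionally convergent, so the cancellation of the factors $e^{-\ii t/((2k-1)\pi)}$ must be carried out through the symmetric partial products over $|k|\le N$ rather than an arbitrary rearrangement. This is legitimate precisely because the underlying random series converges in $L^2$ (square-summable coefficients, Remark~\ref{Rsum}), so its \chf{} is the pointwise limit of the symmetric partial products. Everything else — the chi-square \chf{} and the Euler product for $\cosh$ — is routine.
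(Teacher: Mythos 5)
Your proposal is correct and follows essentially the same route as the paper: pair the indices $k$ and $1-k$ to get the two-sided identity in \eqref{larea0}, compute the characteristic function factor by factor using \eqref{zz2}, identify the product as $\cosh(t)^{-1/2}$ via the Euler product, and obtain $\weta\eqd\eta$ by writing it as a sum of two independent copies of $\wxeta$. The only cosmetic difference is that the paper computes the \chf{} directly from the already-paired form $\sum_{k\ge1}\frac1{(2k-1)\pi}(\zeta_{k,1}^2-\zeta_{k,2}^2)$, which avoids the need for your (correct but unnecessary) discussion of symmetric partial products for the bi-infinite exponential factors.
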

\begin{proof}
All sums converge in $L^2$, \cf{} \refR{Rsum}.
We obtain the second equality (in distribution) in \eqref{larea0} by 
combining in the first sum  the terms for $k$ and $1-k$. 

The difference $\zeta^2_{k,1}-\zeta^2_{k,2}$
has the \chf, see \eqref{zz2},
\begin{align}\label{larea2}
  \E e^{\ii t (\zeta^2_{k,1}-\zeta^2_{k,2})}
=(1-2\ii t)\qqw(1+2\ii t)\qqw
=(1+4 t^2)\qqw,
\end{align}
and consequently, $\wxeta$ has the \chf
\begin{align}\label{larea3}
  \E e^{\ii t \wxeta}
=\prodk \Bigpar{1+\frac{4t^2}{(2k-1)^2\pi^2}}\qqw
=\bigpar{\cosh(t)}\qqw,
\end{align}\label{larea4}
where the last equality is well-known, see \eg{} \cite[(4.36.2)]{NIST}.
Since $\weta\eqd \wxeta+\wxeta'$ where $\wxeta'$ is an independent copy of $\wxeta$,
it follows that $\weta$ has the \chf
\begin{align}
  \E e^{\ii t \weta}
=\bigpar{\E e^{\ii t \wxeta}}^2
=\frac{1}{\cosh(t)},
\end{align}
which agrees with \eqref{area}.
\end{proof}

Formulas for the cumulants of these variables are given in \refApp{Acumulants}.

\section{Background: classical  \Ustat{s}}\label{SU}
As a background, we summarize in the following  theorem some
known result on the asymptotic distribution of $U$-statistics of the
standard type \eqref{U2}
in the special case of order $m=2$. 
The general case with arbitrary (fixed)
$m\ge2$ is similar with mainly notational complications;
the only essential difference is that degeneracies of higher order may occur
and then the limiting distributions are much more complicated
(although such cases are rarely seen in applications).
We restrict ourselves to $m=2$ because this is the case relevant for the
cyclic and alternating \Ustat{s} discussed in the present paper,
and also because it may be easier to see the general ideas in this somewhat
simpler case.
(We do not know any reference where all these results are collected and
presented for the case $m=2$.)
For completeness, we give a proof in \refApp{AA}.

For further results and for
the general case with arbitrary $m$, we refer to, for example,
\cite{Hoeffding,Hoeffding-LLN,RubinVitale1980,DynkinM} for the symmetric case
\eqref{U1},
and \cite[Chapter 11.1--2]{SJIII} for the general (asymmetric) case \eqref{U2}.
For the strong law of large numbers, see \refSS{SSstrong}.

In the theorem, note in particular the dichotomy between
the \emph{nondegenerate} case with $\gss>0$ and then variance of order
$n^3$ and asymptotically normal distribution (see \ref{TU1}),
and the \emph{degenerate} case in \ref{TU2}--\ref{TU2a} with $\gss=0$ 
and then variance of smaller order $n^2$ and a non-normal limit distribution.

\begin{theorem}\label{TU}
With notations and assumptions 
as in \refS{Sprel},
the following holds.
\begin{romenumerate}
\item \label{TU0}
We have
\begin{align}\label{tu00}
  \E U_n(f) = \tbinom n2\mu
\end{align}
and, as \ntoo, we have the weak law of large numbers
\begin{align}\label{tu0}
  \frac{1}{\binom n2} U_n(f)\pto \mu.
\end{align}

\item \label{TU1}
As \ntoo,
\begin{align}\label{gss1}
n^{-3}\Var[U_n(f)]\to
  \gss:=\tfrac{1}{3}
\bigpar{\E [f_1(\XX)^2] + \E [f_2(\XX)^2] + \E [f_1(\XX)f_2(\XX)]}
\end{align}
and 
\begin{align}\label{tu1}
  n^{-3/2}\lrpar{U_n(f)-\tbinom n2 \mu} \dto N(0,\gss).
\end{align}
Furthermore, $\gss>0$ unless $f_1(\XX)=f_2(\XX)=0$ a.s.

\item\label{TU2} 
If\/ $f_1(\XX)=f_2(\XX)=0$ a.s., and thus $\gss=0$, then
\begin{align}
  \label{tu20}
  \Var[U_n(f)] = \tbinom{n}{2}\Var [f(X_1,X_2)]
=\tfrac{1}{2}n^2\Var [f(X_1,X_2)] + O(n).
\end{align}
Moreover, there exists
a finite or infinite sequence of real numbers $(\gl_r)_1^R$ such that
\begin{align}\label{tu2}
  n\qw\lrpar{U_n(f)-\tbinom n2 \mu} \dto W:=\sumrR\tfrac12\gl_r(\zeta_r^2-1),
\end{align}
where $(\zeta_r)_1^R$ are independent standard normal variables.
The coefficients
$(\gl_r)_1^R$ 
are the nonzero eigenvalues (with multiplicities) 
of the self-adjoint 
integral operator $T_\hfmu$ 
on $L^2(\cX\times\oi,\nu\times\Leb)$
(where $\Leb$ is Lebesgue measure)
defined as in \eqref{Tg} using \eqref{hff}.
We have
\begin{align}\label{tu22}
\Var W=
 \tfrac12 \sumrR\gl_r^2 = \tfrac12\Var[f(X_1,X_2)]
<\infty.
\end{align}

\item \label{TU2s}
In the special case of \ref{TU2} where
furthermore
$f$ is symmetric, 
the coefficients
$(\gl_r)_1^R$ in \eqref{tu2} are the nonzero eigenvalues (with multiplicities) 
of the self-adjoint 
integral operator $T_\fmu$ 
on $L^2(\cX,\nu)$.

\item \label{TU2a}
In the special case of \ref{TU2} where
furthermore
$f$ is antisymmetric, 
then also
\begin{align}
  \label{tu2a}
  n\qw{U_n(f)} \dto 
W:=\sumqQp\gla_q\eta_q,
\end{align}
where $(\eta_q)_1^\Qp$ are independent random variables with the stochastic area
distribution \eqref{area}, and
the coefficients 
$(\gla_q)_1^\Qp$  are the positive numbers such that the imaginary
number $\ii\gla_q$ is an eigenvalue of the 
anti-self-adjoint
operator $T_f$ on $\LLC(\cX,\nu)$.
We have
\begin{align}\label{tu2a2}
  \Var W = \sumqQp(\gla_q)^2 = \tfrac12\Var[f(X_1,X_2)].
\end{align}

\end{romenumerate}
\end{theorem}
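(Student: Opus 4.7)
The plan is to base everything on Hoeffding's decomposition \eqref{a2} and the orthogonality relations in \refL{LH}. Writing $U_n(f)=\SO+\SI+\SII+\SQ$ with $\SO=\binom{n}{2}\mu$, $\SI=\sum_{i<j}f_1(X_i)=\sum_{k=1}^n(n-k)f_1(X_k)$, $\SII=\sum_{i<j}f_2(X_j)=\sum_{k=1}^n(k-1)f_2(X_k)$, and $\SQ=\sum_{i<j}f_{12}(X_i,X_j)$, \refL{LH} together with \eqref{ai1}--\eqref{ai2} makes these four terms pairwise uncorrelated. This gives \eqref{tu00} immediately, and a direct variance bound $\Var U_n(f)=O(n^3)$ yields \eqref{tu0}.

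For \ref{TU1}, note that $\SI+\SII=\sum_{k=1}^n\bigpar{(n-k)f_1(X_k)+(k-1)f_2(X_k)}$ is a sum of independent centred variables whose variance is $\sum_k\bigpar{(n-k)^2\E f_1(\XX)^2+(k-1)^2\E f_2(\XX)^2+2(n-k)(k-1)\E[f_1(\XX)f_2(\XX)]}=\tfrac{n^3}{3}\xpar{\E f_1^2+\E f_2^2+\E f_1f_2}+O(n^2)$, confirming \eqref{gss1}. Meanwhile $\Var\SQ=\binom{n}{2}\E f_{12}^2=O(n^2)$ by \refL{LH}, so $n^{-3/2}\SQ=\oLL(1)$. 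Asymptotic normality \eqref{tu1} then follows from the Lyapunov CLT applied to the bounded triangular array $n^{-3/2}\bigpar{(n-k)f_1(X_k)+(k-1)f_2(X_k)}$. The claim $\gss>0$ unless $f_1(\XX)=f_2(\XX)=0$ a.s. follows by writing $\gss=\tfrac13\int_0^1\E\sqpar{(1-s)f_1(\XX)+sf_2(\XX)}^2\dd s$ (or by Cauchy--Schwarz).

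For \ref{TU2}, the key reduction is to introduce independent $U_1,\dots,U_n\sim\Uoi$ independent of $(X_i)$, set $\hX_i:=(X_i,U_i)\in\hcX$, and observe that permuting via the random rank of the $U_i$'s preserves the distribution of the \iid{} sample $(X_i)$. This yields
\begin{align*}
U_n(f)\eqd \sum_{i,j:\,U_i<U_j}f(X_i,X_j)=\tfrac12\sum_{i\neq j}\hf(\hX_i,\hX_j),
\end{align*}
exhibiting $U_n(f)$ as (half of) a \emph{symmetric} classical \Ustat{} $\UU_n(\hf)$ on the product space $\hcX=\cX\times\oi$. Under the degeneracy hypothesis $f_1=f_2=0$, the first-order Hoeffding projections of $\hf-\mu$ vanish as well (integrate \eqref{hff} in $y$ or $u$), so the classical symmetric degenerate limit theorem applies: spectrally decomposing $\hfmu=\sum_r\gl_r\gf_r\tensor\gf_r$ via the Hilbert--Schmidt self-adjoint operator $T_\hfmu$ and using that $\sum_{i\neq j}\gf_r(\hX_i)\gf_r(\hX_j)=\bigpar{\sum_i\gf_r(\hX_i)}^2-\sum_i\gf_r(\hX_i)^2\overset{d}{\longrightarrow}n(\zeta_r^2-1)$, combined with the uniform $L^2$-control provided by \eqref{ff}, yields \eqref{tu2}. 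Variance \eqref{tu22} is then Parseval applied to $T_\hfmu$.

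Parts \ref{TU2s} and \ref{TU2a} require only an eigenvalue translation. In the symmetric case $\hf((x,t),(y,u))=f(x,y)$, so $T_\hfmu=T_\fmu\tensor T_{\etta_\oi}$ via \eqref{tensor}, where $T_{\etta_\oi}$ is the rank-one projection with eigenvalue $1$ (and zero otherwise); thus the nonzero spectra coincide. In the antisymmetric case (so $\mu=0$), $\hf((x,t),(y,u))=f(x,y)\sgn(u-t)$, so $T_\hf=T_f\tensor T_{\sgn}$ on $\LLC(\cX)\tensor\LLC(\oi)$. The operator $T_f$ is anti-self-adjoint, with spectrum $\pm\ii\gla_q$; the operator $T_{\sgn}\chi(t)=\int_0^1\sgn(u-t)\chi(u)\dd u$ satisfies $(T_{\sgn}\chi)'=-2\chi$ with boundary condition $T_{\sgn}\chi(0)+T_{\sgn}\chi(1)=0$, whose eigenvalues are readily computed to be $\bigset{2\ii/\bigpar{(2k-1)\pi}:k\in\bbZ}$. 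By \eqref{tensor} the eigenvalues of $T_\hf$ are $\pm 2\gla_q/\bigpar{(2k-1)\pi}$, each with multiplicity two (from the $\pm\ii\gla_q$ pair). Substituting into \eqref{tu2} and regrouping the four $\zeta$-variables at level $k$ yields exactly the sum in \eqref{larea}; invoking \refL{Larea} converts this to $\sum_q\gla_q\eta_q$, giving \eqref{tu2a}. The variance \eqref{tu2a2} follows from $\sum_q\gla_q^2=\tfrac12\sum_q\norm{T_f}_{HS}^2=\tfrac12\Var f(X_1,X_2)$ (using antisymmetry and $\mu=0$).

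The main obstacle is the eigenvalue computation for the antisymmetric case: identifying the spectrum of the $\sgn$-kernel integral operator on $\oi$ and then matching the resulting countable collection of eigenvalues, with the right multiplicities, to the explicit representation of the stochastic area in \refL{Larea}. Everything else is a bookkeeping exercise on the Hoeffding decomposition plus standard CLT/spectral arguments.
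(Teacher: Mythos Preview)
Your proposal is correct and follows essentially the same route as the paper: Hoeffding decomposition for parts \ref{TU0}--\ref{TU1}, the symmetrization trick $\hX_i=(X_i,U_i)$ with $\hf$ to reduce \ref{TU2} to the symmetric degenerate case, spectral expansion of the symmetric kernel for that case, and for \ref{TU2a} the tensor factorization $T_\hf=T_f\tensor T_{\sgn}$ together with the explicit spectrum of $T_{\sgn}$ and \refL{Larea}. Two cosmetic slips: your integral formula for $\gss$ has a spurious $\tfrac13$ (the integral already equals $\gss$), and in the last line $\tfrac12\sum_q\norm{T_f}_{HS}^2$ should read $\tfrac12\norm{T_f}_{HS}^2$; neither affects the argument.
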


As said above, the proof is given in \refApp{AA}.

\begin{remark}\label{Rsanity}
As a sanity check, we note that if $f$ is symmetric, then the 
nonzero eigenvalues of the operators $T_{\hf-\mu}$ in \ref{TU2} and
$T_{f-\mu}$ in \ref{TU2s} are the same, so the conclusions agree.
In fact, if $f$ is symmetric, the \eqref{hff} yields 
\begin{align}
  \label{hffsymm}
\hf\bigpar{(x,t),(y,u)}=f(x,y).
\end{align}
Letting $\etta$ denote the function on $\oi^2$ that is constant 1,
we thus have, using the tensor notation in \eqref{tensor},
$\hf=f\tensor\etta$ and consequently
\begin{align}
  T_{\hf}=T_f\tensor T_{\etta}.
\end{align}
$T_{\etta}$ is the integral operator $T_{\etta} g(t)=\intoi g(u)\dd u$; this is
the projection onto the constant functions and has a single nonzero
eigenvalue 1. Consequently, $T_{\hf}=T_f\tensor T_{\etta}$ has the same
nonzero eigenvalues as $T_f$. 
(In this simple case, this can also easily be seen
directly from \eqref{hffsymm}; the eigenfunctions of $T_\hf$ with nonzero
eigenvalues are the functions of the form $\gf(x,t)=\gf_1(x)$ where $\gf_1$
is an eigenfunction of $T_f$ with the same eigenvalue.) 
\end{remark}

\section{Cyclic \Ustat{s}}\label{SUc}

We next give the corresponding result for cyclic \Ustat{s}.

\begin{theorem}\label{TC}
With notations and assumptions 
as in \refS{Sprel},
the following holds for the cyclic \Ustat{} $\Uc_n(f)$ in \eqref{Uc}.
\begin{romenumerate}
\item \label{TC0}
We have
\begin{align}\label{tc00}
  \E [\Uc_n(f)] = n \floor{\tfrac{n}2}\mu
=\tfrac{n^2}{2}\mu +O(n),
\end{align}
and, as \ntoo, we have the weak law of large numbers 
\begin{align}\label{tc0}
  \frac{2}{n^2} \Uc_n(f)\pto \mu.
\end{align}

\item \label{TC1}
As \ntoo,
\begin{align}\label{gss1c}
n^{-3}\Var[\Uc_n(f)]\to
  \gss&:=
\tfrac{1}{4}\Var\bigsqpar{f_1(\XX)+f_2(\XX)}
\notag\\&\phantom:
=\tfrac14\bigpar{\E [f_1(\XX)^2] + \E [f_2(\XX)^2] + 2\E [f_1(\XX)f_2(\XX)]}
\end{align}
and
\begin{align}\label{tc1}
  n^{-3/2}\lrpar{\Uc_n(f)-\tfrac{n^2}{2}\mu} \dto N(0,\gss),
\end{align}
Furthermore, $\gss>0$ unless $f_1(\XX)+f_2(\XX)=0$ a.s.

  \item\label{TC2} 
If\/ $f_1(\XX)+f_2(\XX)=0$ a.s., 
and thus $\gss=0$, 
then
\begin{align}\label{tc20}
  \Var[\Uc_n(f)] = n \floor{\tfrac{n}2}\Var [f_{12}(X_1,X_2)]
=\tfrac{1}{2}n^2\Var [f_{12}(X_1,X_2)] + O(n).
\end{align}
Moreover, there exist
finite or infinite sequences of real numbers $(\gls_r)_1^{R}$ 
and $(\gla_q)_1^{\Qp}$ 
such that
\begin{align}\label{tc2}
n\qw\lrpar{\Uc_n(f)-\E[\Uc_n(f)]} \dto 
W:=\sumrR\tfrac12\gls_r(\zeta_r^2-1)
+\sumqQp\gla_q \eta_q
\end{align}
where $(\zeta_r)_1^R$ are  standard normal variables and $(\eta_q)_1^\Qp$ 
have the
stochastic area distribution \eqref{area}, and all are independent.
The coefficients
$(\gls_r)_1^R$ in \eqref{tc2} are the nonzero eigenvalues (with
multiplicities) 
of the self-adjoint 
integral operator $T_{\fiiis}$ 
on $L^2(\cX,\nu)$,
where, recalling \eqref{fsa},
$\fiiis:=(f_{12}+\xf_{12})/2$ is the symmetric part of $f_{12}$.
Similarly,
the coefficients $(\gla_q)_1^\Qp$ 
are the positive numbers such that the imaginary
number $\ii\gla_q$ is an eigenvalue of the 
anti-self-adjoint operator $T_\fiiia$ on $\LLC(\cX,\nu)$,
where $\fiiia:=(f_{12}-\xf_{12})/2$ is the antisymmetric part of $f_{12}$.
We have
\begin{align}\label{Cemms}
  \sumrR(\gls_r)^2&=\Var[\fiiis(X_1,X_2)],
\\\label{Cemma}
  \sumqQp(\gla_q)^2&=\tfrac12\Var[\fiiia(X_1,X_2)],
\end{align}
and                    
\begin{align}\label{Cemmas}
\Var W&
=\tfrac12 \sumrR(\gls_r)^2+  \sumqQp(\gla_q)^2
=\tfrac12\Var[f_{12}(X_1,X_2)].
\end{align}

\end{romenumerate}
\end{theorem}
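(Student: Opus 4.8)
The overall strategy is to reduce each part to the Hoeffding decomposition \eqref{a2} and then handle the three orthogonal pieces separately, exploiting that the ``off-diagonal'' part $f_{12}$ contributes only at scale $n$ while the linear parts $f_1,f_2$ contribute at scale $n^{3/2}$. For part \ref{TC0} the mean \eqref{tc00} is immediate since $\Uc_n(f)$ is a sum of $n\floor{n/2}$ terms each with expectation $\mu$; \eqref{tc0} then follows from \eqref{tc1} (or directly from a variance bound). For part \ref{TC1}, I would substitute \eqref{a2} into \eqref{Uc}. The constant term contributes $n\floor{n/2}\mu=\E[\Uc_n]$. The term $f_{12}$ contributes a sum whose variance is $O(n^2)$ by \refL{LH}. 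The main contribution is the linear part: $\sum_{i\in\bbZ_n}\sum_{1\le j<n/2}\bigpar{f_1(X_i)+f_2(X_{i+j})}$. Reindexing, each $X_k$ appears as a ``left endpoint'' of roughly $n/2$ pairs and as a ``right endpoint'' of roughly $n/2$ pairs, so this is asymptotically $\tfrac{n}{2}\sum_{k\in\bbZ_n}\bigpar{f_1(X_k)+f_2(X_k)}$ up to an error that is $o(n^{3/2})$ in $L^2$ (the edge corrections in the number of appearances are uniformly $O(1)$ per index, contributing $O(n)$ to the variance). A plain CLT for the i.i.d.\ sum $\sum_k (f_1+f_2)(X_k)$ then gives \eqref{tc1} with the variance \eqref{gss1c}; the positivity claim is immediate.

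For the degenerate part \ref{TC2}, the linear part now vanishes and $\Uc_n(f)-\E[\Uc_n(f)]$ equals (up to the $O(\sqrt n)$-in-$L^2$ errors already noted) the quadratic statistic $\sum_{i\in\bbZ_n}\sum_{1\le j<n/2} f_{12}(X_i,X_{i+j})$; the variance formula \eqref{tc20} follows from \refL{LH}. I would next \emph{decouple} the symmetric and antisymmetric parts of $f_{12}$, as promised in \refR{Rdecouple}: write $f_{12}=\fiiis+\fiiia$. For the symmetric piece, observe that in the cyclic sum $f_{12}(X_i,X_{i+j})$, whether a given unordered pair $\{k,\ell\}$ is counted as $(k,\ell)$ or $(\ell,k)$ depends on the direction of the short arc, but since $\fiiis$ is symmetric this sign ambiguity disappears and the cyclic sum of $\fiiis$ is just $\sum_{k<\ell \text{ (cyclically)}}\fiiis(X_k,X_\ell)$ over \emph{all} pairs (minus the negligible $n/2$ diametric terms when $n$ is even), i.e.\ essentially $\sum_{1\le k<\ell\le n}\fiiis(X_k,X_\ell)$ up to relabelling. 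Then \refT{TU}\ref{TU2s} applies directly (with $f$ replaced by $\fiiis$, which is symmetric and has vanishing linear Hoeffding terms since $f_{12}$ does), giving the $\sum\tfrac12\gls_r(\zeta_r^2-1)$ part with $(\gls_r)$ the nonzero eigenvalues of $T_{\fiiis}$ and \eqref{Cemms} from \eqref{tu22}.

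The antisymmetric piece is where the stochastic-area terms come from, and I expect this to be the main obstacle. For $\fiiia$ antisymmetric, the cyclic sum $\sum_{i\in\bbZ_n}\sum_{1\le j<n/2}\fiiia(X_i,X_{i+j})$ genuinely sees the cyclic order, and its limit is $\sum_q \gla_q\eta_q$. The natural approach is the method of moments / cumulants: the $\ell$-th cumulant of this sum, as $\ntoo$, should converge to $\sum_q c_\ell(\gla_q\eta_q)$ where $c_\ell(\eta)$ are the cumulants of the stochastic area (tabulated in \refApp{Acumulants}), and these in turn should be expressible as $\Tr$ of powers of a suitable operator built from $\czf$ on $\hcX$ (using \eqref{czf}) — with the cyclic structure replaced, in the limit, by a Brownian-bridge/Brownian-motion pairing, exactly as in the classical antisymmetric case \refT{TU}\ref{TU2a} but now with the ``arc'' structure translating the spectrum of $T_{\fiiia}$ on $\cX$ into the $\gla_q$. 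Concretely I would: (i) replace the cyclic sum by a U-statistic over $\hcX=\cX\times\oi$ by attaching i.i.d.\ uniform labels $U_i$ — so that the event ``short arc from $i$ to $i+j$'' is encoded by $U_i<U_{i+j}$ in the limit — reducing to a classical antisymmetric degenerate U-statistic with kernel $\czf$; (ii) invoke \refT{TU}\ref{TU2a} for that kernel on $\hcX$ to get the limit as $\sum\gla_q\eta_q$ with $\ii\gla_q$ the eigenvalues of $T_{\czf}$ on $\LLC(\hcX)$; and (iii) identify the nonzero eigenvalues of $T_{\czf}$ on $\hcX$ with those of $T_{\fiiia}$ on $\LLC(\cX)$ by a tensor/Volterra argument analogous to \refR{Rsanity} — here $\czf$ restricted by the ordering of the $\oi$-coordinates behaves like $f_{12}$ tensored with the Volterra ``sign of $t-u$'' kernel on $\oi$, whose relevant spectral data is exactly what produces the stochastic-area $\cosh$-type characteristic function. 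Step (iii), the spectral identification matching $T_{\czf}$ on $\hcX$ with $T_{\fiiia}$ on $\cX$ together with the combinatorial justification of step (i) (showing the cyclic ordering is asymptotically equivalent to the uniform-label ordering, with the discarded contributions $\oLL(n)$), is the delicate point. Finally, asymptotic independence of the three pieces — constant (deterministic), linear ($N(0,\gss)$, here zero), symmetric-quadratic, antisymmetric-quadratic — follows because the $\fiiis$ and $\fiiia$ contributions are built from orthogonal kernels and, after the reductions above, live in orthogonal Wiener chaos components (cf.\ \cite[Theorem 6.1]{SJIII}); combining gives \eqref{tc2}, and adding variances gives \eqref{Cemmas} via \eqref{Cemms}, \eqref{Cemma}, and orthogonality of $\fiiis,\fiiia$.
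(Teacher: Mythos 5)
Your treatment of \ref{TC0}, \ref{TC1}, \eqref{tc20}, and the symmetric piece $\fiiis$ is sound and matches the paper (for symmetric kernels the cyclic sum is the ordinary $U$-statistic up to the $\OLL(n\qq)$ diametric terms, and \refT{TU}\ref{TU2s} applies). But your handling of the antisymmetric piece has a genuine gap. You propose to reduce $\sum_{i}\sum_{1\le j<n/2}\fiiia(X_i,X_{i+j})$ to a classical degenerate $U$-statistic by arguing that the cyclic short-arc ordering is ``asymptotically equivalent to the uniform-label ordering, with the discarded contributions $\oLL(n)$''. No such $L^2$-approximation exists: for antisymmetric $\fiiia=f_{12}$ the difference $\Uc_n(\fiiia)-U_n(\fiiia)$ is a sum of $\Theta(n^2)$ pairwise uncorrelated terms (those unordered pairs whose short-arc direction disagrees with the linear order), hence is $\Theta_{L^2}(n)$ — the same order as the statistics themselves. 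Indeed the paper notes (\refS{Sfurther}) that $\Uc_n$ and $U_n$ are asymptotically \emph{uncorrelated} in the degenerate case and conjecturally converge to \emph{independent} copies of the limit; they are equal only in limiting distribution, and that equality is precisely what must be proved. The paper's route is different and exact: for even $n=2m$ it pairs antipodal indices, $\tX_i:=(X_i,X_{i+m})$, so that $\Uc_{2m}(f)=U_m(F;\tX_1,\dots,\tX_m)$ identically with $F$ as in \eqref{Fc}, and then applies \refT{TU}\ref{TU2} to $F$ and diagonalizes $T_{\hF}$ in block form.

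The second gap is the independence of the two sums in \eqref{tc2}. You assert it ``because the $\fiiis$ and $\fiiia$ contributions are built from orthogonal kernels and live in orthogonal Wiener chaos components''. Orthogonality of kernels gives uncorrelatedness, not independence, and both limits sit in the \emph{same} (second) Wiener chaos; \refE{ESJ22} is an explicit counterexample for $U_n$, where $U_n(\fs)$ and $U_n(\fa)$ have orthogonal kernels yet dependent limits with joint characteristic function \eqref{york}. The decoupling for $\Uc_n$ is a special feature (cf.\ \refR{Rdecouple}) that in the paper falls out of the computation: the eigenspaces of $T_{\hF}$ split via \eqref{Cjw6} into those of $T_{\hf+\hxf}$ and of $T_{\hf-\hxf}$, and the single application of \eqref{tu2} to $F$ supplies independent $\zeta_r$ across this entire eigenvalue family, which is what makes the two groups independent. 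Without an argument of this kind (or a joint moment/cumulant computation), the independence claim in \eqref{tc2} is unproved.
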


\begin{proof}
We start by closely following the proof in \refApp{AA}
of the corresponding classical result for the usual \Ustat{s} in \refT{TU}.

Note that the cyclic \Ustat{} $\Uc_n$ in \eqref{Uc} is a sum of 
$n\flfracnn= \frac12n^2+O(n)$ terms.
This shows \eqref{tc00}.

We substitute the decomposition \eqref{a2} into the definition \eqref{Uc}.
Note that each $i\in\bbZ_n$ occurs in $\flfracnn$ terms in the 
double sum \eqref{Uc}, and so does every value of $i+j\in\bbZ_n$.
Hence,
\begin{align}\label{aida}
\Uc_n(f)
= \E [\Uc_n(f)] + \lrflfracnn \sumin f_1(X_i)  + \lrflfracnn \sumin f_2(X_i)
+ 
\sum_{i\in\bbZ_n}\sum_{1\le j < n/2} f_{12}(X_i,X_{i+j}).
\end{align}
 Each of the first two sums is a sum of \iid{} random variables
with mean zero and finite variance; 
hence these sums are $\OLL(n\qq)$, and the corresponding terms
are $\OLL(n^{3/2})$. Furthermore, the final double sum is a sum of $O(n^2)$
identically distributed 
terms that are uncorrelated by \refL{LH} and have mean zero, and thus
the double sum is $\OLL(n)=\oLL(n^{3/2})$.

In the rest of the proof we replace $f$ by $f-\mu$
(which does not change $f_1$, $f_2$, or $f_{12}$); hence we may and do
assume without loss of generality that $\mu=0$. Thus $\E[\Uc_n(f)]=0$ by
\eqref{tc00}.

\pfitemx{\ref{TC0} and \ref{TC1}}
By \eqref{aida} and the comments after it,
\begin{align}\label{aid1}
  n^{-3/2}\Uc_n(f)
= n^{-3/2}\frac n2 \sumin\bigpar{f_1(X_i)+ f_2(X_i)} + \oLL(1).
\end{align}
Since the variables $X_i$ are \iid, \eqref{aid1}
implies immediately both \eqref{gss1c} and,
by the classical central limit theorem 
(together with the Cram\'er--Slutsky theorem \cite[Theorem 5.11.4]{Gut}),
\eqref{tc1}.
Any of these implies \eqref{tc0}, and \eqref{tc00} was proved above.

\pfitemref{TC2}
Since $f_1(X_i)+f_2(X_i)=0$ a.s., the first two sums in \eqref{aida} cancel.
Hence, using also our simplifying assumption $\mu=0$, we now have 
$\Uc_n(f)=\Uc_n(f_{12})$, and we may simplify the notation by assuming
$f=f_{12}$.
Then, \refL{LH} shows that \eqref{Uc} is a sum of $n\flfracnn$
uncorrelated,
identically distributed,
terms, and \eqref{tc20} follows.

It will be convenient to consider even $n$, so we first note that for
any $n\ge1$,
by \eqref{Uc} and some bookkeeping,
\begin{align}\label{aid3}
  \Uc_{2n+1}(f) = \Uc_{2n}(f) 
+ \sumin f(X_i,X_{i+n})
+ \sumin f(X_{2n+1},X_{i})
+\sum_{i=n+1}^{2n}f(X_i,X_{2n+1}).
\end{align}
(As a check, note that the total number of terms is $(2n+1)n$ in $\Uc_{2n+1}$
and $2n(n-1)$ in $\Uc_{2n}$, and that every term in $\Uc_{2n}$ appears also
in $\Uc_{2n+1}$.)
Each of the three sums in \eqref{aid3} is,
by \refL{LH}, a sum of $n$ uncorrelated variables, and it follows that it is
$\OLL(n\qq)$. Hence it suffices to prove \eqref{tc2} for even $n$; the case
of odd $n$ then follows. 

We consider thus $\Uc_{2n}$, where we assume $n\ge2$.
Define 
\begin{align}\label{CtX}
\tX_i:=(X_{i},X_{i+n}),
\qquad i=1,\dots,n.  
\end{align}
Then $(\tX_i)\xoon$ is an \iid{} sequence of random variables in $\cX^2$.
Define the function $F$ on $\cX^4=\cX^2\times\cX^2$ by
\begin{align}\label{Fc}
  F\bigpar{(x_1,x_2),(y_1,y_2)}
:=f(x_1,y_1)+f(y_1,x_2)+f(x_2,y_2)+f(y_2,x_1).
\end{align}
It now follows from the definitions \eqref{Uc} and \eqref{U2} that
\begin{align}\label{CUU}
  \Uc_{2n}(f) = U_n(F;\tX_1,\dots,\tX_n).
\end{align}
Consequently, we may use the classical result
\refT{TU} for the
usual \Ustat{s}, applied to $F$ and $(\tX_i)$.
%
Recall that we have $f_1=f_2=0$ and that we have assumed $\mu=0$,
which clearly implies also
(using that $(X_i)_i$ are \iid)
\begin{align}
  F_\emptyset 
=\E F(\tX_1,\tX_2)
= \E F\bigpar{(X_1,X_2),(X_3,X_4)}=0.
\end{align}
Furthermore, \eqref{a4} applied to $F$ yields, using \eqref{Fc},
\begin{align}\label{FF}
  F_1(x_1,x_2) &
= \E F\bigpar{(x_1,x_2),(X_3,X_4)}
\notag\\&
=\E f(x_1,X_3)+\E f(X_3,x_2) + \E f(x_2,X_4) + \E f(X_4,x_1)
\notag\\&
= f_1(x_1)+f_2(x_2)+f_1(x_2)+f_2(x_1)
=0
\end{align}
and similarly $F_2(x_1,x_2)=0$.
(This follows also from \eqref{CUU} and \eqref{tc20}, which show that 
$\Var[ U_n(F)]=\Var[\Uc_{2n}(f)]=O(n^2)$, together with \eqref{gss1}.)

Hence, \refT{TU}\ref{TU2} applies, and shows that \eqref{tu2} holds for
$U_n(F)$; it remains to find the eigenvalues $\gl_r$ of $T_{\hF}$,
where $T_{\hF}$ is the integral operator on $L^2(\cX^2\times\oi)$ 
with kernel,
by \eqref{hff} and \eqref{Fc},
\begin{align}\label{Cjw1}
&\hF\bigpar{(x_1,x_2,t),(y_1,y_2,u)} 
= \bigpar{f(x_1,y_1)+f(y_1,x_2)+f(x_2,y_2)+f(y_2,x_1)}\indic{t<u}
\notag\\&\qquad
+
\bigpar{f(y_1,x_1)+f(x_1,y_2)+f(y_2,x_2)+f(x_2,y_1)}\indic{t>u}
\notag\\&
=\hf\bigpar{(x_1,t),(y_1,u)}
+
\hf\bigpar{(y_1,t),(x_2,u)}
+
\hf\bigpar{(x_2,t),(y_2,u)}
+
\hf\bigpar{(y_2,t),(x_1,u)}
\notag\\&
=\hf\bigpar{(x_1,t),(y_1,u)}
+
\hxf\bigpar{(x_2,t),(y_1,u)}
+
\hf\bigpar{(x_2,t),(y_2,u)}
+
\hxf\bigpar{(x_1,t),(y_2,u)}.
\end{align}
We pause for a general observation on this type of kernels.
Let 
\begin{align}\label{Cjw2}
  L^2_0(\cX\times\oi)
:=\Bigset{h\in L^2(\cX\times\oi):\int_\cX h(x,t)\dd\nu(x)=0 \text{ for
  a.e.\ }t\in\oi} 
\end{align}
and let, for $j=1,2$, $M_j$ be the subspace of $L^2(\cX\times\cX\times\oi)$
consisting of functions of the type $g(x_1,x_2,t)=h(x_j,t)$ for some 
$h\in L^2_0(\cX\times\oi)$. 

If $g_j\in M_j$ for $j=1,2$, then with obvious notation,
\begin{align}\label{Cjw3}
&
\int_{\cX\times\cX\times\oi}g_1(x_1,x_2,t)g_2(x_1,x_2,t)\dd\nu(x_1)\dd\nu(x_2)\dd t
\notag\\&\qquad
=
\int_{\cX\times\cX\times\oi}h_1(x_1,t)h_2(x_2,t)\dd\nu(x_1)\dd\nu(x_2)\dd t
=0.
\end{align}
Thus $M_1$ and $M_2$ are orthogonal subspaces of $L^2(\cX\times\cX\times\oi)$.
Let $M_1\oplus M_2$ by their direct sum; this is also a subspace of
$L^2(\cX\times\cX\times\oi)$.

\begin{lemma}\label{LXX}
Let $i,j\in\set{1,2}$.
  Suppose that $g$ is a function in $L^2\bigpar{(\cX\times\cX\times\oi)^2}$ 
of the form
$g\bigpar{(x_1,x_2,t),(y_1,y_2,u)}=h(x_i,y_j,t,u)$ where 
$\int_\cX h(x,y,t,u)\dd\nu(x)=0$ for a.e.\ $y\in\cX$ and $t,u\in\oi$.
Then $T_g$ maps $L^2(\cX\times\cX\times\oi)$ into the subspace $M_i$.
Furthermore, $T_g$ maps $M_{3-j}$ to $0$.
\end{lemma}
\begin{proof}
  Simple consequences of the definitions and Fubini's theorem.
\end{proof}

To continue the proof of \refT{TC},
we see from \eqref{Cjw1} that $\hF$ is a sum of 8 terms, each of them of the
type in \refL{LXX}.
Hence, $T_\hF$ maps $L^2(\cX\times\cX\times\oi)$ into $\MM$, and thus all
eigenfunctions for a nonzero eigenvalue belong to $\MM$. Hence, to find the
nonzero eigenvalues, it suffices to consider the restriction of 
$T_\hF$ to $\MM$.

Let $\psi_1+\psi_2\in \MM$, with $\psi_j\in M_j$, and let 
(with a minor abuse of notation) 
$\psi_j$ denote also 
the corresponding function in $L^2_0:=L^2_0(\cX\times\oi)$. 
Then, by \eqref{Cjw1} and \refL{LXX} (which also shows that some terms vanish),
\begin{align}\label{Cjw4}
  T_\hF(\psi_1+\psi_2)(x_1,x_2,t)
=T_\hf \psi_1(x_1,t) + T_\hxf\psi_1(x_2,t)
+T_\hf \psi_2(x_2,t) + T_\hxf\psi_2(x_1,t),
\end{align}
where the four terms on the \rhs{} belong to $M_1$, $M_2$, $M_2$, $M_1$,
respectively.
Since $\psi_1+\psi_2$ is an eigenfunction with eigenvalue
$\gl$ if and only if
the \lhs{} of \eqref{Cjw4} equals $\gl\psi_1(x_1,t) + \gl\psi_2(x_2,t)$,
it follows by separating both sides of \eqref{Cjw4} into their components in
$M_1$ and $M_2$ (or, equivalently, by separating terms depending on $x_1$
from terms depending on $x_2$)
that
$\psi_1+\psi_2$ is an eigenfunction with eigenvalue $\gl\neq0$ if and only if
\begin{align}\label{Cjw5}
  \begin{cases}
T_\hf \psi_1  + T_\hxf\psi_2 = \gl\psi_1,
\\
T_\hxf\psi_1 +T_\hf \psi_2 = \gl\psi_2. 
  \end{cases}
\end{align}
By adding and subtracting these equations, 
we obtain the equivalent system
\begin{align}\label{Cjw6}
  \begin{cases}
(T_\hf+T_\hxf)( \psi_1 +\psi_2) = \gl(\psi_1+\psi_2),
\\
(T_\hf-T_\hxf)(\psi_1-\psi_2) = \gl(\psi_1-\psi_2)  . 
  \end{cases}
\end{align}
Let, for $0\neq\gl\in\bbC$ and an operator $T$ on a vector space,
$E_\gl(T)$ denote the eigenspace $\set{h:Th=\gl h}$.
The map $\psi_1+\psi_2\mapsto(\psi_1+\psi_2,\psi_1-\psi_2)$
is a bijection of $M_1\oplus M_2$ onto $L^2_0\times L^2_0$, and
\eqref{Cjw6} shows that this bijection maps the eigenspace 
$E_\gl(T_\hF)$ onto $E_\gl(T_{\hf+\hxf})\oplus E_\gl(T_{\hf-\hxf})$.
In particular, the dimensions agree, which shows that the multiset of
nonzero eigenvalues of 
$T_\hF$ equals the union of the multisets of nonzero eigenvalues of  
$T_{\hf+\hxf}$ and $T_{\hf-\hxf}$.
We analyze these separately.

First, recalling \eqref{fi1} and \eqref{fsa},
$f+\xf=2\fs$ is symmetric, and thus, by \eqref{hff},
$\widehat{f+\xf}(x,y,t,u)=2\widehat{\fs}(x,y,t,u)=2\fs(x,y)$.
Hence, the corresponding eigenvalues are 2 times the eigenvalues $\gls_r$
of $T_\fs$.
(Cf.\ \refR{Rsanity}.)
The contribution from the eigenvalues of $T_{\hf+\hxf}$ to the limit (in
distribution) \eqref{tu2} of $n\qw U_n(F)$ is thus
\begin{align}\label{Cemm1}
  \sumrR\tfrac12(2 \gls_r)(\zeta_r^2-1)
=
  \sumrR \gls_r(\zeta_r^2-1)
.\end{align}

On the other hand,
$f-\xf=2\fa$ is antisymmetric. Its eigenvalues on the positive
imaginary axis are  $(2\ii\gla_q)_1^\Qp$, and thus it follows from
\refL{Lanti} that 
the contribution from the eigenvalues of $T_{\hf-\hxf}$ to the limit
\eqref{tu2} is 
\begin{align}\label{Cemm2}
  \sumqQp2\gla_q\eta_q.
\end{align}
It follows from \eqref{tu2} also that the contributions in \eqref{Cemm1} amd
\eqref{Cemm2} are independent.

Consequently,
recalling \eqref{CUU}, \eqref{tu2} for $U_n(F)$ implies that
\begin{align}\label{Cemm3}
  \frac{1}{n} \Uc_{2n}(f)=\frac{1}{n} U_n(F)
\dto \sumrR \gls_r(\zeta_r^2-1) + 2\sumqQp\gla_q\eta_q
.\end{align}
This shows that 
$\frac{1}{2n} \Uc_{2n}(f)$ has the limit $W$ in \eqref{tc2}.
In other words, \eqref{tc2} holds for even $n$, which as said above implies
the general case.

Finally, \eqref{Cemms} follows from \eqref{lts2} applied to 
$\fs$,
and \eqref{Cemma} follows from \eqref{lanti3} applied to $\fa$.
The first equality in \eqref{Cemmas} follows from \eqref{tc2};
the second follows from \eqref{Cemms} and \eqref{Cemma} since $\fs$ and
$\fa$ are orthogonal.
\end{proof}

\begin{remark}\label{RC=U}
  If $f$ is symmetric, then we obtain the same asymptotic results for
  $\Uc_n$ as in  \refT{TU} for $U_n$. This is nothing new, since, as noted
  in \refR{Rsymm}, in this case $\Uc_n=U_n$ for odd $n$, and
$\Uc_n=U_n+\OLL(n\qq)$ for even $n$.

On the other hand, if $f$ is antisymmetric, then $f_2(x)=-f_1(x)$, and thus
$\Uc_n$ is always of the degenerate type, while $U_n$ is nondegenerate
unless $f_1(x)=0$. In the latter case, when $f$ is antisymmetric and $f=f_{12}$,
we again find the same asymptotic results for $\Uc_n$ and $U_n$, this time
less obviously.
\end{remark}

\begin{remark}\label{Rdecouple}
Note that, rather surprisingly, \eqref{tc2} shows that in the degenerate
case \ref{TC2}, the contributions to $\Uc_n(f)$ from the 
symmetric and antisymmetric parts of $f$ decouple, so that $W$ is a sum of
two independent components.
Equivalently, by the Cram\'er--Wold device and
applying the theorem to $s\fs+t\fa$ for $s,t\in\bbR$,
$n\qw\Uc_n(\fs)$ and $n\qw\Uc_n(\fa)$ converge jointly in distribution to 
the two independent sums in \eqref{tc2}.

There is no such decoupling for the standard \Ustat{} $U_n$, or for any of
the alternating \Ustat{s}, as will be seen in \refE{ESJ22}.
Hence, the decoupling for $\Uc_n$ seems to be an effect of the larger
(cyclical) symmetry of $\Uc_n$.
\end{remark}

\section{Bi-alternating \Ustat{s}}\label{SB}

We next give the corresponding result for the bi-alternating \Ustat{}
$\Umm$.
The result is similar to \refTs{TU} and \ref{TC}, but the alternating signs
in the definition \eqref{U--} lead to cancellations and as a result there
is no case corresponding to the nondegenerate case for standard or cyclic
\Ustat{s}; the main case corresponds to the degenerate case in the previous
theorems.
There is also a rather uninteresting new case \ref{TBv}, included for
completeness, 
with an even smaller variance $O(n)$
and
$\Umm_n(f)$ reduced to a sum of \iid{} variables.  

\begin{theorem}\label{TB}
With notations and assumptions 
as in \refS{Sprel},
the following holds for the bi-alternating \Ustat{} $\Umm_n(f)$ in \eqref{U--}.
\begin{romenumerate}
\item \label{TB0}
We have
\begin{align}\label{tb00}
  \E [\Umm_n(f)] = -\floor{\tfrac{n}{2}} \mu
=O(n),
\end{align}
and, as \ntoo, we have the weak law of large numbers 
\begin{align}\label{tb0}
  \frac{1}{\binom n2} \Umm_n(f)\pto 0.
\end{align}

  \item\label{TB2} 
We have
\begin{align}\label{tb20}
  \Var[\Umm_n(f)] 
=\tfrac{1}{2}n^2\Var [f_{12}(X_1,X_2)] + O(n).
\end{align}
Moreover, there exists a
finite or infinite sequence of real numbers $(\gl_r)_1^{R}$ 
such that
\begin{align}\label{tb2}
n\qw\lrpar{\Umm_n(f)-\E[\Umm_n(f)]} \dto 
W:=\sumrR\tfrac12\gl_r(\zeta_r^2-1)
\end{align}
where $(\zeta_r)_1^R$ are  independent standard normal variables.
The coefficients
$(\gl_r)_1^R$ in \eqref{tb2} are the nonzero eigenvalues (with multiplicities) 
of the self-adjoint 
integral operator $T_{\hfiii}$ 
on $L^2(\cX\times\oi,\nu\times\Leb)$.
We have
\begin{align}\label{Bemms}
\Var W = \tfrac12  \sumrR\gl_r^2&
=\tfrac12\Var[f_{12}(X_1,X_2)].
\end{align}

\item \label{TB2s}
In the special case of \ref{TB2} where
furthermore
$f$ is symmetric, 
the coefficients
$(\gl_r)_1^R$ in \eqref{tb2} are the nonzero eigenvalues (with multiplicities) 
of the self-adjoint 
integral operator $T_{f_{12}}$ 
on $L^2(\cX,\nu)$.

\item \label{TB2a}
In the special case of \ref{TB2} where
furthermore
$f$ is antisymmetric, 
then also
\begin{align}
  \label{tb2a}
  n\qw\Umm_n(f) \dto 
W:=\sumqQp\gla_q\eta_q,
\end{align}
where $(\eta_q)_1^\Qp$ are independent random variables with the stochastic area
distribution \eqref{area}, and
the coefficients $(\gla_q)_1^\Qp$  are the positive numbers such that the
imaginary number $\ii\gla_q$ is an eigenvalue of the 
anti-self-adjoint operator $T_{f_{12}}$ on $\LLC(\cX,\nu)$.
We have
\begin{align}\label{tb2a2}
  \Var W = \sumqQp(\gla_q)^2 = \tfrac12\Var[f_{12}(X_1,X_2)].
\end{align}
\item \label{TBv}
If $f_{12}=0$, then
\begin{align}\label{tbv01}
  \Var[\Umm_{2n}(f)]&=2n\gsse+O(1),
\\  \label{tbv02}
  \Var[\Umm_{2n+1}(f)]&=(2n+1)\gsso+O(1),
\end{align}
where
\begin{align}\label{tbv3}
  \gsse&:=\tfrac12\bigpar{\Var[f_1(X)] + \Var[f_2(X)]},
\\\label{tbv4}
  \gsso&:=\tfrac12\Var[{f_1(X)+f_2(X)}].
\end{align}
Furthermore,
\begin{align}\label{tbv91}
  (2n)\qqw\bigpar{\Umm_{2n}(f)-\E [\Umm_{2n}(f)]}&\dto N(0,\gsse),
\\  \label{tbv92}
(2n+1)\qqw\bigpar{\Umm_{2n+1}(f)-\E[\Umm_{2n+1}(f)]}&\dto N(0,\gsso).
\end{align}
\end{romenumerate}
\end{theorem}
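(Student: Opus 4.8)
The plan is to mimic the proof of \refT{TC}, reducing the bi-alternating statistic to a classical \Ustat{} by a pairing trick, and then to read off eigenvalues. First I would substitute Hoeffding's decomposition \eqref{a2} into \eqref{U--}. The key is that $\sum_{j>i}(-1)^{i+j}$ over a fixed $i$ (or fixed $j$) telescopes: for the $f_1(X_i)$ contribution, the coefficient of $f_1(X_i)$ is $\sum_{j=i+1}^n(-1)^{i+j}$, which is $0$ or $\pm1$ depending on the parity of $n-i$; similarly for $f_2(X_j)$. Hence, unlike in \refTs{TU} and \ref{TC}, the linear terms contribute only $\OLL(n\qq)$, not $\OLL(n^{3/2})$ — this is the source of the ``no nondegenerate case'' phenomenon. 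The $f_\emptyset$ term gives $\E[\Umm_n]=\mu\sum_{1\le i<j\le n}(-1)^{i+j}=-\floor{n/2}\mu$, proving \eqref{tb00}, and \eqref{tb0} follows since the remaining (degenerate) part has variance $O(n^2)$. The variance count \eqref{tb20} follows from \refL{LH}: after removing the linear and constant parts, $\Umm_n(f)-\E[\Umm_n(f)]=\Umm_n(f_{12})+\OLL(n\qq)$ is a sum of $\binom n2$ uncorrelated terms $\pm f_{12}(X_i,X_j)$, each of variance $\Var[f_{12}(X_1,X_2)]$, so the variance is $\binom n2\Var[f_{12}(X_1,X_2)]+O(n)=\tfrac12 n^2\Var[f_{12}(X_1,X_2)]+O(n)$.

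For the limit law \ref{TB2}, I would pass (as in \refT{TC}) to even indices, say $n=2N$, and pair up the indices by their residue mod $2$: the sign $(-1)^{i+j}$ depends only on the parities of $i$ and $j$. Writing $\tX_k:=(X_{2k-1},X_{2k})$ for $k=1,\dots,N$ gives an \iid{} sequence in $\cX^2$, and $\Umm_{2N}(f_{12})$ becomes a classical \Ustat{} $U_N(F;\tX_1,\dots,\tX_N)$ plus lower-order terms, where $F$ on $\cX^4$ collects the sign pattern; concretely, within a block $\tX_k$ the pair $(2k-1,2k)$ has sign $(-1)^{2k-1+2k}=-1$, between blocks $k<l$ the four cross pairs have signs $(-1)^{(2k-1)+(2l-1)}=1$, $(-1)^{(2k-1)+2l}=-1$, $(-1)^{2k+(2l-1)}=-1$, $(-1)^{2k+2l}=1$, so
\begin{align}\label{Fbb}
  F\bigpar{(x_1,x_2),(y_1,y_2)}
=f_{12}(x_1,y_1)-f_{12}(x_1,y_2)-f_{12}(x_2,y_1)+f_{12}(x_2,y_2),
\end{align}
with the within-block diagonal term $-f_{12}(x_1,x_2)$ absorbed into the negligible sum of \iid{} variables (there are only $N$ such terms, variance $O(N)$). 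One checks $F_\emptyset=F_1=F_2=0$ (each is a sum of $f_{12}$-integrals, all zero by \eqref{ai2}), so \refT{TU}\ref{TU2} applies to $U_N(F)$, and the coefficients are the eigenvalues of $T_{\hF}$ on $L^2(\cX^2\times\oi)$. Now $\hF$ has the tensor form $\hf_{12}\tensor(\text{something})$ — more precisely, $F=f_{12}\tensor\chi$ where $\chi$ is the function $\chi(s,u)=(-1)^{s+u}$ on $\setoi^2$ under the identification $x=(x_1,x_2)$, $s\in\setoi$, but cleaner: $\hF$ acts on functions of the form $\gf(x_1,t)-\gf(x_2,t)$ and $\gf(x_1,t)+\gf(x_2,t)$, and the subspace-decomposition argument of \refT{TC} (using \refL{LXX} with the spaces $M_1,M_2$) shows that the $(+,+)$-combination is mapped to $0$ by \eqref{Fbb}'s sign structure while the $(+,-)$-combination $\psi_1-\psi_2$ satisfies $(T_{\hf_{12}}+T_{\hxf_{12}})(\psi_1-\psi_2)=2T_{\hf_{12,\mathsf s}}(\psi_1-\psi_2)$... here I need to be careful: actually the relevant operator on the surviving eigenspace is $T_{\hf_{12}}$ itself, giving the nonzero eigenvalues of $T_{\hf_{12}}$ doubled, and the $2N$ versus $N$ rescaling cancels the factor $2$, yielding exactly \eqref{tb2} with $(\gl_r)$ the eigenvalues of $T_{\hf_{12}}$. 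The special cases \ref{TB2s} (symmetric: $\hf_{12}=f_{12}\tensor\etta$, same nonzero eigenvalues as $T_{f_{12}}$, cf.\ \refR{Rsanity}) and \ref{TB2a} (antisymmetric: apply \refL{Lanti}/\refL{Lanti3} to express the limit via stochastic areas $\eta_q$) then follow as in \refT{TC}. The variance identities \eqref{Bemms}, \eqref{tb2a2} follow from \eqref{tu22}, \refL{Lanti3} and the fact that $\tfrac12\sum\gl_r^2=\tfrac12\Var[f_{12}]$.

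For \ref{TBv}, when $f_{12}=0$ the whole statistic collapses: $\Umm_n(f)=\E[\Umm_n(f)]+\sum_i c_i^{(1)}f_1(X_i)+\sum_j c_j^{(2)}f_2(X_j)$ where $c^{(1)}_i=\sum_{j>i}(-1)^{i+j}$ and $c^{(2)}_j=\sum_{i<j}(-1)^{i+j}$. Writing out these telescoping sums: for $n=2N$ even, $c^{(1)}_i=-\indic{i\text{ odd}}$ and $c^{(2)}_j=\indic{j\text{ even}}$ (or a similar exact formula depending on parity conventions — routine bookkeeping), so $\Umm_{2N}(f)-\E[\Umm_{2N}(f)]=-\sum_{i\le N}f_1(X_{2i-1})+\sum_{i\le N}f_2(X_{2i})$, a sum of $2N$ independent mean-zero terms with total variance $N(\Var f_1+\Var f_2)+O(1)=2N\cdot\tfrac12(\Var f_1+\Var f_2)+O(1)$, giving \eqref{tbv01} and \eqref{tbv3}; for $n=2N+1$ odd the extra index $2N+1$ contributes both an $f_1$ and (vacuously) no $f_2$, and one ends up with one term where $f_1$ and $f_2$ are evaluated at the same $X$, producing the $\Var[f_1+f_2]$ form in \eqref{tbv4} — hence \eqref{tbv02}. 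The CLT statements \eqref{tbv91}, \eqref{tbv92} are then immediate from the classical CLT for \iid{} sums together with Cram\'er--Slutsky, exactly as in the proof of \refT{TC}\ref{TC1}.

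The main obstacle is the bookkeeping in \ref{TB2}: one must verify carefully that the diagonal within-block contribution and the odd-vs-even discrepancy are genuinely $\OLL(n\qq)$, and — more delicately — get the constant right in identifying the eigenvalues of $T_{\hF}$ with (a multiple of) those of $T_{\hf_{12}}$, making sure the factor-of-$2$ from the block pairing and the $n=2N$ rescaling cancel precisely so that \eqref{tb2} and \eqref{Bemms} hold with the stated normalization rather than off by a factor of $2$. The sign-pattern computation \eqref{Fbb} and the check $F_1=F_2=0$ are the crux that makes the reduction to \refT{TU}\ref{TU2} (the degenerate case) work with no nondegenerate remainder.
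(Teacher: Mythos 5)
Your proposal is correct and follows essentially the same route as the paper: the same pairing $\tX_k=(X_{2k-1},X_{2k})$, the same kernel $F$ (the paper's \eqref{BF}), the reduction to \refT{TU}\ref{TU2}, and the same conclusion that the nonzero eigenvalues of $T_{\hF}$ are $2\gl_r$ with $(\gl_r)$ the eigenvalues of $T_{\hfiii}$ (the paper derives this from the system \eqref{Bjw5}, which forces $\psi_2=-\psi_1$ and $2T_{\hf}\psi_1=\gl\psi_1$), so the factor $2$ cancels against the $2n$-versus-$n$ rescaling exactly as you anticipate. One bookkeeping slip in \ref{TBv}: for odd $n$ it is not ``one term'' but all $\floor{n/2}$ surviving terms that pair $f_1$ and $f_2$ at the same $X_j$ (both parity indicators in the paper's \eqref{Baida} then select the even indices), which is precisely why $\gsso=\tfrac12\Var[f_1(X)+f_2(X)]$ rather than $\tfrac12\bigpar{\Var[f_1(X)]+\Var[f_2(X)]}$.
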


\begin{proof}
We follow the proofs of \refTs{TU} and \ref{TC}, with some differences.
First, \eqref{tb00} follows immediately from \eqref{U--} and
\begin{align}
  \sum_{1\le i<j\le n} (-1)^{i+j}
=
  \sumjn (-1)^{j}\sum_{i=1}^{j-1}(-1)^i
=  \sumjn (-1)^{j+1}{\indic{\text{$j$ is even}}}
=-\lrfloor{\frac n2}.
\end{align}

We substitute the decomposition \eqref{a2} into the definition \eqref{U--},
and obtain by simple calculations
\begin{align}\label{Baida}
\Umm_n(f)&
= \E[\Umm_n(f)] - \sumin \indic{\text{$n-i$ is odd}}f_1(X_i)  
- \sumjn \indic{\text{$j$ is even}}f_2(X_j) 
\notag\\&\qquad{}
+ 
\sum_{1\le i< j \le n}(-1)^{i+j} f_{12}(X_i,X_{j}).
\end{align}
 Each of the first two sums is a sum of $\flfrac n2$ \iid{} random variables
with mean zero and finite variance; 
hence these sums are $\OLL(n\qq)$.
The final double sum is a sum of $\binom n2$
terms that are uncorrelated by \refL{LH} and have mean zero, and up to sign
have the same distribution; hence the double sum has
variance $\binom n2 \Var[f_{12}(X_1,X_2)]$.
It is also easily seen that the double sum is orthogonal to the two other sums,
and \eqref{tb20} follows.

The weak law of large numbers \eqref{tb0} is a consequence of \eqref{tb00}
and \eqref{tb20}. 

\pfitemref{TB2}
As in the other proofs, we  replace $f$ by $f-\mu$. 
In the rest of the proof we thus may assume that $\mu=0$, and thus 
$\E \Umm_n=0$.
By \eqref{Baida}, then
\begin{align}
  \Umm_n(f) = \Umm_n(f_{12})+\OLL(n\qq),
\end{align}
so it suffices to consider $\Umm(f_{12})$, and we may, without loss of
generality, for (notational) simplicity assume $f=f_{12}$.

Again it will be convenient to consider even $n$, 
and we note that
\eqref{U--} implies
\begin{align}\label{Baid3}
  \Umm_{2n+1}(f) = \Umm_{2n}(f) 
+ \sum_{i=1}^{2n} (-1)^{i+1} f(X_i,X_{2n+1})
.\end{align}
The sum in \eqref{Baid3} is,
by \refL{LH} and our assumption $f=f_{12}$, 
a sum of $2n$ uncorrelated variables with means 0, and it follows that it is
$\OLL(n\qq)$. Hence it suffices to prove \eqref{tb2} for even $n$.

We consider thus $\Umm_{2n}$, where we assume $n\ge2$.
We now define (cf.\ \eqref{CtX} for the cyclic \Ustat)
\begin{align}\label{BtX}
\tX_i:=(X_{2i-1},X_{2i}),
\qquad i=1,\dots,n.  
\end{align}
Again, $(\tX_i)\xoon$ is an \iid{} sequence of random variables in $\cX^2$.
We now define the function $F$ on $\cX^4=\cX^2\times\cX^2$ by
\begin{align}\label{BF}
  F\bigpar{(x_1,x_2),(y_1,y_2)}
:=f(x_1,y_1)-f(x_1,y_2)-f(x_2,y_1)+f(x_2,y_2),
\end{align}
and
it follows from the definitions \eqref{U--} and \eqref{U2} that
\begin{align}\label{BUU}
  \Umm_{2n}(f) = U_n(F;\tX_1,\dots,\tX_n)
-\sumin f(X_{2i-1},X_{2i})
.\end{align}
The final sum in \eqref{BUU} is $\OLL(n\qq)$, as a sum of $n$ \iid{}
variables with zero mean, and is thus negligible in \eqref{tb2}.
Consequently, we may use \refT{TU} for the
usual \Ustat{s}, applied to $F$ and $(\tX_i)$.
Recall that we have assumed $f=f_{12}$ and thus $f_1=f_2=0=\mu$,
which clearly implies also
\begin{align}
  F_\emptyset 
=\E F(\tX_1,\tX_2)
= \E F\bigpar{(X_1,X_2),(X_3,X_4)}=0.
\end{align}
Furthermore, \eqref{a4}--\eqref{a5} applied to $F$ yield, similarly to
\eqref{FF}, 
\begin{align}\label{BFF}
  F_1(x_1,x_2) 
= F_2(x_1,x_2)
=0
.\end{align}
(Again, this follows also from \eqref{BUU} and \eqref{tb20}, which show that 
$\Var[ U_n(F)]=O(n^2)$, together with \eqref{gss1}.)

Hence, \refT{TU}\ref{TU2} applies, and shows that \eqref{tu2} holds for
$U_n(F)$. It remains to find the eigenvalues $\gl_r$ of $T_{\hF}$,
where $T_{\hF}$ now is the integral operator on $L^2(\cX^2\times\oi)$ 
with kernel,
by \eqref{hff} and \eqref{BF},
\begin{align}\label{Bjw1}
&\hF\bigpar{(x_1,x_2,t),(y_1,y_2,u)} 
= \bigpar{f(x_1,y_1)-f(x_1,y_2)-f(x_2,y_1)+f(x_2,y_2)}\indic{t<u}
\notag\\&\qquad
+
\bigpar{f(y_1,x_1)-f(y_1,x_2)-f(y_2,x_1)+f(y_2,x_2)}\indic{t>u}
\notag\\&
=\hf\bigpar{(x_1,t),(y_1,u)}
-
\hf\bigpar{(x_1,t),(y_2,u)}
-
\hf\bigpar{(x_2,t),(y_1,u)}
+
\hf\bigpar{(x_2,t),(y_2,u)}
.\end{align}
It follows again from \refL{LXX}
that $T_\hF$ maps $L^2(\cX\times\cX\times\oi)$ into $\MM$, and thus all
eigenfunctions for a nonzero eigenvalue belong to $\MM$. 

Let $\psi_1+\psi_2\in \MM$, with $\psi_j\in M_j$, and let again
(with a minor abuse of notation) 
$\psi_j$ denote also 
the corresponding function in $L^2_0:=L^2_0(\cX\times\oi)$. 
Then, by \eqref{Bjw1} and \refL{LXX},
\begin{align}\label{Bjw4}
  T_\hF(\psi_1+\psi_2)(x_1,x_2,t)
=T_\hf \psi_1(x_1,t) - T_\hf\psi_2(x_1,t)
-T_\hf \psi_1(x_2,t) + T_\hf\psi_2(x_2,t),
\end{align}
and it follows 
that
$\psi_1+\psi_2$ is an eigenfunction with eigenvalue $\gl\neq0$ if and only if
\begin{align}\label{Bjw5}
  \begin{cases}
T_\hf \psi_1  - T_\hf\psi_2 = \gl\psi_1,
\\
-T_\hf\psi_1 +T_\hf \psi_2 = \gl\psi_2. 
  \end{cases}
\end{align}
These equations imply $\gl(\psi_1+\psi_2)=0$, and thus
$\psi_2=-\psi_1$; furthermore in this case the system simplifies to
$2T_\hf\psi_1=\gl\psi_1$. 
Hence, 
if the nonzero eigenvalues of $T_\hf$ are 
$(\gl_r)_1^R$, 
then the nonzero eigenvalues of $T_\hF$ are $(2\gl_r)_1^R$.

Consequently, \eqref{BUU} and \refT{TU}\ref{TU2} applied to $F$ show that 
\begin{align}\label{Bemm3}
  \frac{1}{n} \Umm_{2n}(f)=\frac{1}{n} U_n(F) +\oLL(1)
\dto \sumrR \gl_r(\zeta_r^2-1)
.\end{align}
This shows that 
$\frac{1}{2n} \Umm_{2n}(f)$ has the limit $W$ in \eqref{tb2}.
In other words, \eqref{tb2} holds for even $n$, 
and thus in the general case.

The first equality in \eqref{Bemms} follows from \eqref{tb2};
the second follows from \eqref{lts2} applied to $\hfiii$ 
together with \eqref{ff} applied to $f_{12}$.

\pfitemx{\ref{TB2s} and \ref{TB2a}}
These follow from \ref{TB2} as in the proof of \refT{TU}.
Alternatively, we may note that \ref{TB2} shows that $\frac{1}{n}\Umm_n(f)$
has the same limit (in distribution) as $\frac1n U_n(f_{12})$, and thus 
\ref{TB2s} and \ref{TB2a} follow directly from the corresponding parts
\ref{TU2s} and \ref{TU2a} in \refT{TU}.

\pfitemref{TBv}
When $f_{12}=0$ and $\mu=0$, \eqref{Baida} simplifies to
\begin{align}\label{Baid6}
\Umm_n(f)&
= 
- \sum_{k=1}^{n/2}f_1(X_{2k-1})  
- \sum_{k=1}^{n/2}f_2(X_{2k})
\end{align}
when $n$ is even, and
\begin{align}\label{Baid7}
\Umm_n(f)&
= 
- \sum_{k=1}^{(n-1)/2}\bigpar{f_1(X_{2k}) +f_2(X_{2k})}
\end{align}
when $n$ is odd. 
The summands in \eqref{Baid6}--\eqref{Baid7} are independent, and
\eqref{tbv01}--\eqref{tbv02} follow directly;
furthermore, 
\eqref{tbv91}--\eqref{tbv92} follow from the central limit theorem.
\end{proof}

\begin{remark}\label{RB}
  As noted above,
there is for the bi-alternating \Ustat{} $\Umm_n$
no case similar to the nondegenerate cases in \refTs{TU} and
\ref{TC} with a variance of order $n^3$.
In fact, apart from \eqref{tb00} and \ref{TBv},
we can summarize \refT{TB} by saying that
(as noted in the proof above),
$\frac{1}{n}\Umm_n(f-\mu)$ 
has the same asymptotic distribution as $\frac1n U_n(f_{12})$.
However, these variables are not the same for finite $n$; in fact, they
are asymptotically uncorrelated, as is easily seen using \refL{LH}.
Moreover, they converge to two \emph{independent} copies of the same $W$;
this may be seen by adapting the method in the proof to show that for any
constants $s,t\in\bbR$,
\begin{align}
  s\cdot\frac{1}{n}\Umm_n(f-\mu)
+t\cdot\frac1n U_n(f_{12})
\dto s\sumrR \tfrac12 \gl_r(\zeta_r^2-1) + t\sumrR \tfrac12 \gl_r(\tzeta_r^2-1),
\end{align}
where $\zeta_r,\tzeta_r$ are independent standard normal variables.
We omit the details.
\end{remark}

The following connection with $\Uc_n$ was noted (and used) by \cite{writhe}
(in a special case).

\begin{proposition}\label{PC=B}
  If $f$ is antisymmetric, and $n$ is odd, then 
  \begin{align}\label{bc1}
    \Uc_{n}(f) \eqd \Umm_{n}(f).
  \end{align}  
\end{proposition}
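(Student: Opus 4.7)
The plan is to realize both $\Uc_n(f)$ and $\Umm_n(f)$ as linear combinations $\sum_{1\le a<b\le n}\eps_{ab}f(X_a,X_b)$ with $\eps_{ab}\in\{+1,-1\}$ (using antisymmetry of $f$), and then exhibit a permutation $\pi$ of $\setn$ that converts one sign pattern into the other. Since $X_1,\dots,X_n$ are \iid, reindexing by $\pi$ does not change the distribution of the sum, which gives the claim.

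Concretely, for antisymmetric $f$ we may rewrite
\begin{align*}
  \Uc_n(f)&=\sum_{1\le a<b\le n}\sigma_{a,b}f(X_a,X_b),
\end{align*}
where $\sigma_{a,b}=+1$ if $b-a\le(n-1)/2$ and $\sigma_{a,b}=-1$ if $b-a\ge(n+1)/2$; this is read off from \eqref{Uc} by noting that for odd $n$ exactly one of the two cyclic directions between any two indices is ``short''. Meanwhile $\Umm_n(f)=\sum_{a<b}(-1)^{a+b}f(X_a,X_b)$ directly. Next, since $n$ is odd, multiplication by $2$ is a bijection on $\bbZ_n$, so I define $\tau:\setn\to\setn$ by
\begin{align*}
  \tau(i):=
  \begin{cases}
    2i, & 2i\le n,\\
    2i-n,& 2i>n,
  \end{cases}
\end{align*}
and let $\pi:=\tau\qw$. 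Using antisymmetry of $f$ and the substitution $p=\pi(a)\wedge\pi(b)$, $q=\pi(a)\vee\pi(b)$ (so that $\{a,b\}=\{\tau(p),\tau(q)\}$), a direct change of variable gives
\begin{align*}
\Umm_n\bigpar{f;X_{\pi(1)},\dots,X_{\pi(n)}}
=\sum_{1\le p<q\le n}(-1)^{\tau(p)+\tau(q)}\sgn\bigpar{\tau(q)-\tau(p)}f(X_p,X_q).
\end{align*}

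The main remaining step, and the only one requiring work, is to check the sign identity
\begin{align*}
  (-1)^{\tau(p)+\tau(q)}\sgn\bigpar{\tau(q)-\tau(p)}=\sigma_{p,q}
\qquad\text{for all } 1\le p<q\le n.
\end{align*}
This reduces to a small case analysis according to whether $2p$ and $2q$ exceed $n$: if both do, or neither does, then $\tau(q)-\tau(p)=2(q-p)>0$ and $\tau(p)+\tau(q)$ is even, giving $+1$, which matches $\sigma_{p,q}=+1$ (one checks $q-p\le(n-1)/2$ in each of these two subcases). In the remaining case $2p\le n<2q$ one has $\tau(q)-\tau(p)=2(q-p)-n$, whose sign is $-1$ precisely when $q-p\le(n-1)/2$, while the parity exponent $\tau(p)+\tau(q)=2(p+q)-n$ is odd since $n$ is odd; the product is $+1$ when $q-p\le(n-1)/2$ and $-1$ otherwise, again matching $\sigma_{p,q}$.

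Having established the sign identity, the displayed change-of-variable formula reads $\Umm_n(f;X_{\pi(1)},\dots,X_{\pi(n)})=\Uc_n(f;X_1,\dots,X_n)$. Since $(X_{\pi(1)},\dots,X_{\pi(n)})\eqd(X_1,\dots,X_n)$ by the \iid{} assumption, \eqref{bc1} follows. The only potentially tricky part is keeping track of signs in the change-of-variable step and in the case analysis, but both are completely mechanical; the real content is the existence (for odd $n$) of a bijection that makes the match possible, which is precisely what the ``doubling'' permutation $\tau$ provides.
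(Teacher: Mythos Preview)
Your proof is correct and takes essentially the same approach as the paper: both use the doubling permutation $i\mapsto 2i$ on $\bbZ_n$ (valid since $n$ is odd) to match the sign pattern of $\Uc_n$ with that of $\Umm_n$, followed by the same parity case analysis. The paper states the identity as $\Uc_n(f;X_2,X_4,\dots,X_{2n})=\Umm_n(f;X_1,\dots,X_n)$, which is exactly your relation after relabelling via $\tau$.
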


\begin{proof}
  More precisely, we show that 
  \begin{align}\label{bc2}
    \Uc_n(f;X_2,X_4,\dots,X_{2n}) = \Umm_n(X_1,X_2,\dots,X_n),
  \end{align}
where the indices are interpreted modulo $ n$ as in \refS{SUc};
then \eqref{bc1} follows since $(X_i)_1^n$ are \iid
Note that since $n$ is odd, $i\mapsto2i$ is a bijection of the index set
$\bbZ_n$ onto itself; hence the \lhs{} of \eqref{bc2} contains all variables
$X_1,\dots,X_n$, but in different order.

It follows from the definitions, and the assumption that $f$ is
antisymmetric, that both sides of  \eqref{bc2} are sums containing $\binom
n2$ term of the type $\pm f(X_i,X_j)$, one for each unordered pair \set{i,j}
with $i\neq j$. We only have to verify that the signs agree.
On the \lhs, we have one term $f(X_{2i},X_{2i+2j})$ for every $i\in[n]$
and $j\in[(n-1)/2]$, where $[n]:=\setn$.
Letting  $k\equiv 2i\pmod n$ and $l\equiv2(i+j)\pmod n$ be the
representatives with $k,l\in[n]$, then 
either $k<l$ and $l-k=2j$ is even, or $l<k$ and $k-l=n-2j$ is odd;
conversely, every such pair $(k,l)$ corresponds to a unique pair
$(i,j)\in[n]\times[(n-1)/2]$. 
Since $f$ is antisymmetric, a term $f(X_k,X_l)$ with $l<k$ 
equals $-f(X_l,X_k)$, and thus we see that
the \lhs{} of \eqref{bc1} contains $f(X_k,X_l)$ for $k<l$ with $k-l$ even,
and (interchanging $k$ and $l$),
$-f(X_k,X_l)$ for $k<l$ with $k-l$ odd; this is the same as $\Umm_n$ in
\eqref{U--}. 
\end{proof}

\begin{remark}\label{Reven}
\refP{PC=B} does not hold for even $n$, simply because $\Uc_n$ and $\Umm_n$
then are sums of different numbers of terms and thus, even when 
$f=f_{12}$, they have in general different variances
(using \refL{LH}). 
However, 
for antisymmetric $f$,
\eqref{bc1} holds approximatively with an error $\OLL(n\qq)$ 
also for even $n$ 
as a consequence of \eqref{aid3} and \eqref{Baid3}.
Note also that \eqref{bc1} fails in general for symmetric $f$, even if
$f=f_{12}$; for an example let $f(x,y):=(x-p)(y-p)$ and $X\in\Be(p)$ with
$\frac12<p<1$.  
\end{remark}

\section{Singly alternating \Ustat{s}}\label{SA}
We turn to $\Ump_n$ and $\Upm_n$ in \eqref{U-+}--\eqref{U+-}.
We note first that by arguing as in \eqref{aid3} and \eqref{Baid3}, we see
that
\begin{align}\label{j+-}
  \Ump_n(f)&
=\sum_{1\le i<j\le n}(-1)^{i+1}f(X_i,X_j)
=\sum_{2\le i<j\le n+1}(-1)^{i+1}f(X_i,X_j)+\OLL(n\qq)
\notag\\&
\eqd-\Ump_n(f)+\OLL(n\qq).  
\end{align}
The error term will be negligible in most of our asymptotic results below;
in particular, \eqref{j+-} implies that
any distributional limit found for $n^{-3/2}\Ump_n$ or $n\qw\Ump_n$ has to
be symmetric.
The same holds for $\Upm_n$ by the same argument, or by \eqref{fi2}.
(Note that $U_n$, $\Uc_n$, and $\Umm_n$ can have asymmetric asymptotic
distributions, see \refE{E1}.)
 
\begin{theorem}\label{TA}
With notations and assumptions 
as in \refS{Sprel},
the following holds.
\begin{romenumerate}
\item \label{TA0}
We have
\begin{align}\label{ta00}
  \E [\Upm_n(f)] = (-1)^n\lrflfrac n2\mu
.\end{align}

\item \label{TA1}
As \ntoo,
\begin{align}\label{Agss1}
n^{-3}\Var[\Upm_n(f)]\to
  \gss:=\tfrac{1}{3}\E [f_2(\XX)^2]
\end{align}
and 
\begin{align}\label{ta1}
  n^{-3/2}\lrpar{\Upm_n(f)-\E[\Upm_n(f)]} \dto N(0,\gss).
\end{align}

\item\label{TA2} 
If\/ $f_2(\XX)=0$ a.s., and thus $\gss=0$, then
\begin{align}
  \label{ta20}
  \Var[\Upm_n(f)]
=\tfrac{1}{2}n^2\Var [f_{12}(X_1,X_2)] + O(n).
\end{align}
Moreover, there exists
a finite or infinite sequence of real numbers $(\gl_r)_1^R$ such that
\begin{align}\label{ta2}
n\qw\lrpar{\Upm_n(f)-\E[\Upm_n(f)]} \dto 
  W:=\sumrR\tfrac12\gl_r(\zeta_r^2-1),
\end{align}
where $(\zeta_r)_1^R$ are independent standard normal variables.
The coefficients
$(\gl_r)_1^R$ in \eqref{ta2} are the nonzero eigenvalues (with multiplicities) 
of the self-adjoint  operator 
on $(L^2(\cX\times\oi,\nu\times\Leb))^2$
given in block form by
\begin{align}\label{ta21}
\tfrac12 \matrixx{-T_\hfiii&T_\czfiii\\-T_\czfiii&T_\hfiii}.
\end{align}
We have
\begin{align}\label{ta22}
\Var W=
\tfrac12\sumrR\gl_r^2 = \tfrac12\Var[f_{12}(X_1,X_2)]
<\infty.
\end{align}

\item \label{TA2s}
In the special case of \ref{TA2} where
furthermore
$f$ is symmetric, 
then also
\begin{align}\label{ta2s}
n\qw\lrpar{\Upm_n(f)-\E[\Upm_n(f)]} \dto 
W:=\sumrR\gl_r\xeta_r,
\end{align}
where $(\xeta_r)_1^R$ are \iid{} with the distribution \eqref{xeta},
and the coefficients
$(\gl_r)_1^R$  are the nonzero eigenvalues (with multiplicities) 
of the integral operator $T_f$ on $L^2(\cX,\nu)$.
We have
\begin{align}\label{ta2s2}
  \Var W = \tfrac12\sumrR\gl_r^2 = \tfrac12\Var[f_{12}(X_1,X_2)].
\end{align}

\item \label{TA2a}
In the special case of \ref{TA2} where
furthermore
$f$ is antisymmetric, 
then also
\begin{align}
  \label{ta2a}
  n\qw{\Upm_n(f)} \dto 
W:=\sumqQp\gla_q\eta_q,
\end{align}
where 
$(\eta_q)_1^\Qp$ are independent random variables with the stochastic area
distribution \eqref{area}, and
the coefficients 
$(\gla_q)_1^\Qp$  are the positive numbers such that the imaginary
number $\ii\gla_q$ is an eigenvalue of the 
anti-self-adjoint
operator $T_f$ on $\LLC(\cX,\nu)$.
We have
\begin{align}\label{ta2a2}
  \Var W = \sumqQp(\gla_q)^2 = \tfrac12\Var[f_{12}(X_1,X_2)].
\end{align}

\item \label{TAv}
If $f_2=f_{12}=0$, then
\begin{align}\label{tav01}
  \Var[\Upm_{n}(f)]=n\gss_1+O(1)
\end{align}
where
\begin{align}\label{tav3}
  \gss_1&:=\tfrac12\Var[f_1(X)]
.\end{align}
Furthermore,
\begin{align}\label{tav91}
  n\qqw\bigpar{\Upm_{n}(f)-\E [\Upm_{n}(f)]}&\dto N(0,\gss_1).
\end{align}
\end{romenumerate}
\end{theorem}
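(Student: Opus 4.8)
The plan is to follow the template of the proofs of \refTs{TU}, \ref{TC} and \ref{TB}: substitute Hoeffding's decomposition \eqref{a2} into the definition \eqref{U+-}, identify which piece dominates, and in the degenerate cases reduce to a classical \Ustat{} by pairing consecutive variables and invoking \refT{TU}. For \ref{TA0} I would evaluate $\sum_{1\le i<j\le n}(-1)^j=\sum_{j=1}^n(-1)^j(j-1)=(-1)^n\floor{n/2}$ by an elementary pairing argument, so that $\E[\Upm_n(f)]=\mu\sum_{1\le i<j\le n}(-1)^j$. Substituting \eqref{a2} into \eqref{U+-} then gives
\begin{align*}
\Upm_n(f)&=\E[\Upm_n(f)]+\sumin c_i f_1(X_i)\\
&\qquad{}+\sumjn(-1)^j(j-1)f_2(X_j)+\sum_{1\le i<j\le n}(-1)^j f_{12}(X_i,X_j),
\end{align*}
where $c_i:=\sum_{j=i+1}^n(-1)^j\in\set{0,\pm1}$. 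Here the $f_1$-sum has variance $O(n)$, since the partial alternating coefficients $c_i$ are bounded; the $f_{12}$-double sum has uncorrelated mean-zero terms by \refL{LH}, hence variance $\binom n2\Var[f_{12}(X_1,X_2)]=O(n^2)$; and the $f_2$-sum is a weighted sum of \iid{} variables with variance $\sum_{j=1}^n(j-1)^2\E[f_2(X)^2]=\tfrac13 n^3\E[f_2(X)^2]+O(n^2)$. Since the $f_{12}$-sum is orthogonal to the other two by \eqref{ai2} and the cross-covariance of the $f_1$- and $f_2$-sums is only $O(n^2)$, this gives \eqref{Agss1}; the central limit theorem applied to the $f_2$-sum (Lindeberg's condition holds, the weights being of order $n$ while the variance is of order $n^3$) then gives \eqref{ta1}, proving \ref{TA1}.

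For the degenerate case \ref{TA2} I would first replace $f$ by $f-\mu$, so that $\mu=0$; the $f_1$-sum and the now-vanishing $f_2$-sum are then $\OLL(n\qq)=\oLL(n)$, so we may assume without loss of generality that $f=f_{12}$, i.e.\ $f_1=f_2=\mu=0$. Passing from an odd $n=2m+1$ to $n=2m$ costs only $\sum_{i=1}^{2m}(-1)^{2m+1}f(X_i,X_{2m+1})=\OLL(m\qq)$ (uncorrelated terms, \refL{LH}), so it suffices to treat even $n=2m$. Pairing $\tX_i:=(X_{2i-1},X_{2i})$, $i=1,\dots,m$, the definition \eqref{U+-} gives $\Upm_{2m}(f)=U_m(F;\tX_1,\dots,\tX_m)+\OLL(m\qq)$, the error collecting the diagonal terms $\sum_i f(X_{2i-1},X_{2i})$, where
\begin{align*}
F\bigpar{(x_1,x_2),(y_1,y_2)}:=-f(x_1,y_1)+f(x_1,y_2)-f(x_2,y_1)+f(x_2,y_2).
\end{align*}
One checks $F_\emptyset=F_1=F_2=0$ (using $f=f_{12}$), so \refT{TU}\ref{TU2} applies to $U_m(F)$ and yields $m\qw U_m(F)\dto\sumrR\tfrac12\mu_r(\zeta_r^2-1)$ with $(\mu_r)$ the nonzero eigenvalues of $T_{\hF}$ on $L^2(\cX^2\times\oi)$; it remains to identify these.

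Writing out \eqref{hff} for $F$ and regrouping the eight resulting terms, I expect
\begin{align*}
\hF\bigpar{(x_1,x_2,t),(y_1,y_2,u)}
&=-\hf\bigpar{(x_1,t),(y_1,u)}+\czf\bigpar{(x_1,t),(y_2,u)}\\
&\qquad{}-\czf\bigpar{(x_2,t),(y_1,u)}+\hf\bigpar{(x_2,t),(y_2,u)},
\end{align*}
the occurrence of the antisymmetrized kernel $\czf$ of \eqref{czf} alongside $\hf$ being the one genuinely new feature relative to \refT{TB}. Each of these four terms has the form treated in \refL{LXX}, so $T_{\hF}$ maps $L^2(\cX\times\cX\times\oi)$ into $\MM$; restricting to $\MM\cong(L^2_0(\cX\times\oi))^2$ and separating the $x_1$- and $x_2$-dependence exactly as in \eqref{Cjw4}--\eqref{Cjw5} turns the eigenvalue equation into the block system $\smatrixx{-T_{\hf}&T_{\czf}\\-T_{\czf}&T_{\hf}}\smatrixx{\psi_1\\\psi_2}=\gl\smatrixx{\psi_1\\\psi_2}$. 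Because $f=f_{12}$, \eqref{ai2} shows that $T_{\hf}$ and $T_{\czf}$ both map $L^2(\cX\times\oi)$ into $L^2_0(\cX\times\oi)$, so this block operator has the same nonzero eigenvalues on all of $(L^2(\cX\times\oi))^2$ as on $(L^2_0)^2$; it is self-adjoint since $T_{\hf}$ is self-adjoint and $T_{\czf}$ anti-self-adjoint, and conjugation by the swap $\psi_1\leftrightarrow\psi_2$ carries it to its own negative, so its spectrum is symmetric about $0$, consistent with \eqref{j+-}. Taking the factor $n=2m$ into account, the coefficients in \eqref{ta2} are the nonzero eigenvalues of $\tfrac12\smatrixx{-T_{\hf}&T_{\czf}\\-T_{\czf}&T_{\hf}}$, i.e.\ of \eqref{ta21}; and \eqref{ta20} and \eqref{ta22} follow from \refL{LH}, \eqref{ff}, \eqref{tu22} and the fact that $\E[F(\tX_1,\tX_2)^2]=4\Var[f_{12}(X_1,X_2)]$ (by \refL{LH}).

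For \ref{TA2s} and \ref{TA2a} I would argue as in the proof of \refT{TU}, either by specialising the spectrum of \eqref{ta21} or directly. In the symmetric case $f=f_{12}$ is symmetric, so $\hf=f_{12}\tensor\etta$ and $\czf=f_{12}\tensor s$ with $s(t,u):=\sgn(u-t)$ on $\oi^2$, whence \eqref{ta21} equals $T_{f_{12}}$ tensored with $\tfrac12\smatrixx{-T_\etta&T_s\\-T_s&T_\etta}$ on $(L^2(\oi))^2$; a short ODE computation shows the eigenvalues of $T_s$ to be $2\ii/((2k-1)\pi)$, $k\in\bbZ$, and inserting the resulting eigenvalues into \refL{Larea} converts $\sumrR\tfrac12\gl_r(\zeta_r^2-1)$ into $\sumrR\gl_r\xeta_r$, resp.\ (antisymmetric case) into $\sumqQp\gla_q\eta_q$. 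More transparently, expanding $f_{12}=\sum_k\gl_k\,\gf_k\tensor\gf_k$ (symmetric) or in the canonical form $f_{12}=\sum_q\gla_q(\gf_q\tensor\psi_q-\psi_q\tensor\gf_q)$ (antisymmetric), one has $\Upm_n(f_{12})=\sum_k\gl_k V_{k,n}$ where $V_{k,n}=\sum_j(\tS_j^{(k)}-\tS_{j-1}^{(k)})S_{j-1}^{(k)}$ is a discrete stochastic integral (or a difference of two such) in the random walks $S_j^{(k)}:=\sum_{i\le j}\gf_k(X_i)$ and $\tS_j^{(k)}:=\sum_{i\le j}(-1)^i\gf_k(X_i)$; since these walks are asymptotically uncorrelated, a functional central limit theorem gives $n\qw V_{k,n}\dto\intoi B_1\dd B_2=\xeta$ (resp.\ a difference of two such integrals, which by \refL{Larea} is $\eta$), jointly over $k$, with a standard truncation for the infinite sum, and the variance identities then follow from $\Var\xeta=\tfrac12$, $\Var\eta=1$ and $\sumrR\gl_r^2=\Var[f_{12}(X_1,X_2)]$. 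Finally, in \ref{TAv} the hypotheses $f_2=f_{12}=0$ reduce $f$ to $f(x,y)=\mu+f_1(x)$, so the Hoeffding display collapses to $\Upm_n(f)-\E[\Upm_n(f)]=\pm\sum_{i\in S_n}f_1(X_i)$ with $\abs{S_n}=\floor{n/2}$ and all signs equal, and \eqref{tav01} and \eqref{tav91} follow from the ordinary central limit theorem. The main obstacle I anticipate is the bookkeeping in \ref{TA2}: getting $\hF$ right (in particular recognising that it is $\czf$, not $\hf$, in the off-diagonal terms) and carrying the $\MM$-decomposition through to exactly \eqref{ta21}; after that, \ref{TA2s} and \ref{TA2a} need only the small spectral computation for $T_s$ feeding into \refL{Larea}, or — on the direct route — the routine but slightly delicate justification that $V_{k,n}$ converges to $\intoi B_1\dd B_2$.
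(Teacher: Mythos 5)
Your argument is, in substance, the paper's own proof of parts \ref{TA0}--\ref{TA2} and \ref{TAv}: the same Hoeffding display (the paper's \eqref{Aaida}, with the bounded partial alternating sums multiplying $f_1$ and the weights $(-1)^j(j-1)$ multiplying $f_2$), the same reduction to even $n$ and pairing $\tX_i=(X_{2i-1},X_{2i})$, the same kernel $F$ and the same four-term identity expressing $\hF$ through $\hf$ and $\czf$ (this is indeed the one new feature, and you have the signs and the placement of $\czf$ exactly as in \eqref{Ajw1}), \refL{LXX} and the block system leading to \eqref{ta21}, and the collapse of \ref{TAv} to a single \iid{} sum. Your derivation of \eqref{ta22} from $\E[F(\tX_1,\tX_2)^2]=4\Var[f_{12}(X_1,X_2)]$ and \eqref{tu22} is a small valid shortcut compared with the paper's direct computation of the Hilbert--Schmidt norm of the block kernel.

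The one loose point is the spectral step in \ref{TA2s}--\ref{TA2a}. Having factored \eqref{ta21} as $T_{f}\tensor\tfrac12\smatrixx{-T_\etta&T_s\\-T_s&T_\etta}$, you propose to insert ``the eigenvalues of $T_s$'' into \refL{Larea}. That does not quite work: $T_\etta$ and $T_s$ do not commute, so the spectrum of the $2\times2$ block is not a function of $\sigma(T_\etta)$ and $\sigma(T_s)$ separately (and $\sigma(T_s)$ is purely imaginary, so its elements cannot literally be the coefficients of a real linear combination of $\zeta_r^2-1$). One must solve the coupled pair of integral equations for $(\gf_1,\gf_2)$ on $\oi\times\setiii$, which is exactly what the paper's \refLs{LAs} and \ref{LAa} do; the answer happens to coincide in modulus with $\sigma(T_s)$, but that requires the computation. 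Your alternative ``direct'' route via the discrete stochastic integrals $V_{k,n}$ is a genuinely different and valid path for these two parts — essentially the invariance-principle argument of \cite{SJ22}, applied eigenfunction by eigenfunction with a truncation — at the cost of the Donsker-type justification you correctly flag as the delicate step; the paper instead stays within the spectral framework throughout.
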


\begin{proof}
We follow the proofs of \refTs{TC} and \ref{TB}, again with some
differences;
we omit some details that are the same as above.
First, \ref{TA0} 
follows immediately from \eqref{U+-} and
\begin{align}
  \sum_{1\le i<j\le n} (-1)^{j}
=
  \sumin \sum_{j=i+1}^{n}(-1)^j
=  \sumin (-1)^{n}{\indic{\text{$n-i$ is odd}}}
=(-1)^n\lrfloor{\frac n2}.
\end{align}

We substitute the decomposition \eqref{a2} into the definition \eqref{U+-},
and obtain by simple calculations
\begin{align}\label{Aaida}
\Upm_n(f)&
= \E [\Upm_n(f)]
+ (-1)^n\sumin \indic{\text{$n-i$ is odd}}f_1(X_i)  
+ \sumjn (-1)^j(j-1)f_2(X_j) 
\notag\\&\qquad{}
+ 
\sum_{1\le i< j \le n}(-1)^{j} f_{12}(X_i,X_{j}).
\end{align}
The first sum in \eqref{Aaida} has variance $O(n)$, the second has variance
$O(n^3)$ and the final (double) sum has variance $O(n^2)$.

As in the other proofs, we replace $f$ by $f-\mu$. 
In the rest of the proof we thus may assume that $\mu=0$, and thus 
$\E \Upm_n=0$.

\pfitemref{TA1}
The first and third sums in \eqref{Aaida} can be  ignored, and the remaining
second sum is a sum of independent variables. Hence, a simple calculation
yields \eqref{Agss1}, and \eqref{ta1} follows by the central limit theorem.   

\pfitemref{TA2}
We obtain \eqref{ta20} from \eqref{Aaida}.
For \eqref{ta2},
it follows from \eqref{Aaida} that
it suffices to consider $\Upm(f_{12})$, and thus we may, without loss of
generality, for simplicity assume $f=f_{12}$.
Again it will be convenient to consider even $n$, 
and we note that (since $f=f_{12}$)
\eqref{U+-} implies
\begin{align}\label{Aaid3}
  \Upm_{2n+1}(f) = \Upm_{2n}(f) 
- \sum_{i=1}^{2n} f(X_i,X_{2n+1})
=\Upm_{2n}(f)+\OLL(n\qq)
.\end{align}
Hence it suffices to prove \eqref{ta2} for even $n$.
We consider thus $\Upm_{2n}(f)$, where we assume $n\ge2$.
We use again the definition \eqref{BtX} of $\tX$, so that
$(\tX_i)\xoon$ is an \iid{} sequence of random variables in $\cX^2$.
We now define the function $F$ on $\cX^4=\cX^2\times\cX^2$ by
\begin{align}\label{AF}
  F\bigpar{(x_1,x_2),(y_1,y_2)}
:=-f(x_1,y_1)+f(x_1,y_2)-f(x_2,y_1)+f(x_2,y_2),
\end{align}
and
it follows from the definitions \eqref{U+-} and \eqref{U2} that
\begin{align}\label{AUU}
  \Upm_{2n}(f)&= U_n(F;\tX_1,\dots,\tX_n)
+\sumin f(X_{2i-1},X_{2i})
\notag\\&=
U_n(F;\tX_1,\dots,\tX_n)+\OLL(n\qq)
.\end{align}
Consequently, we may use \refT{TU} applied to $F$ and $(\tX_i)$;
we have again $F_\emptyset=F_1=F_2=0$, so \refT{TU}\ref{TU2} applies.

It remains to find the eigenvalues $\gl_r$ of the integral operator
$T_{\hF}$  on $L^2(\cX^2\times\oi)$ 
which has  kernel,
by \eqref{AF} and recalling both \eqref{hff} and \eqref{czf},
\begin{align}\label{Ajw1}
&\hF\bigpar{(x_1,x_2,t),(y_1,y_2,u)} 
= \bigpar{-f(x_1,y_1)+f(x_1,y_2)-f(x_2,y_1)+f(x_2,y_2)}\indic{t<u}
\notag\\&\qquad
+
\bigpar{-f(y_1,x_1)+f(y_1,x_2)-f(y_2,x_1)+f(y_2,x_2)}\indic{t>u}
\notag\\&
=-\hf\bigpar{(x_1,t),(y_1,u)}
+
\czf\bigpar{(x_1,t),(y_2,u)}
-
\czf\bigpar{(x_2,t),(y_1,u)}
+
\hf\bigpar{(x_2,t),(y_2,u)}
.\end{align}
It follows again from \refL{LXX}
that $T_\hF$ maps $L^2(\cX\times\cX\times\oi)$ into $\MM$, and thus all
eigenfunctions for a nonzero eigenvalue belong to $\MM$. 

Let $\psi_1+\psi_2\in \MM$, with $\psi_j\in M_j$, and let again
$\psi_j$ denote also 
the corresponding function in $L^2_0(\cX\times\oi)$. 
Then, by \eqref{Ajw1} and \refL{LXX}, 
\begin{align}\label{Ajw4}
  T_\hF(\psi_1+\psi_2)(x_1,x_2,t)
=-T_\hf \psi_1(x_1,t) + T_\czf\psi_2(x_1,t)
-T_\czf \psi_1(x_2,t) + T_\hf\psi_2(x_2,t),
\end{align}
and it follows 
that
$\psi_1+\psi_2$ is an eigenfunction with eigenvalue $\gl\neq0$ if and only if
\begin{align}\label{Ajw5}
  \begin{cases}
-T_\hf \psi_1  + T_\czf\psi_2 = \gl\psi_1,
\\
-T_\czf\psi_1 +T_\hf \psi_2 = \gl\psi_2. 
  \end{cases}
\end{align}
Hence the nonzero eigenvalues of $T_\hF$ are the nonzero eigenvalues of
the operator
$\smatrixx{-T_\hf&T_\czf\\-T_\czf&T_\hf}$
on $L^2_0(\hcX)\times L^2_0(\hcX)$. These are the same as the nonzero
eigenvalues on
$L^2(\hcX)\times L^2(\hcX)$, since both $T_\hf$ and $T_\czf$ map 
$L^2(\hcX)$ into $L^2_0(\hcX)$. (Cf.\ \refL{LXX}.)
We denote these eigenvalues by $2\gl_r$, as in the statement, 
and obtain by \eqref{AUU} and \eqref{tu2}
\begin{align}\label{mw2}
  n\qw\Upm_{2n}(f)
=n\qw U_n(F;\tX_1,\dots,\tX_n)+\op(1)
\dto \sumrR \gl_r(\zeta_r^2-1),
\end{align}
which  proves \eqref{ta2} for even $n$, 
and thus, by  \eqref{Aaid3}, in general. 

Regard $\setiii$ as a measure space with mass 1 at each of the two
points.
Then, the operator in \eqref{ta21} can be regarded as the operator $T_G$
on $L^2(\cX\times\oi\times\setiii)$ with kernel $G$ given by the block form
\begin{align}\label{mag1}
G:=\tfrac12\matrixx{-\hf&\czf\\-\czf&\hf}
.\end{align}
It follows that, using \eqref{ff} and its counterpart for $\czf$,
\begin{align}\label{mag2}
  \int_{(\hcX\times\setiii)^2}|G|^2 
= \frac{2}{4}\int_{\hcX^2}|\hf|^2 +  \frac{2}{4}\int_{\hcX^2}|\czf|^2
=\int_{\cX^2}|f|^2.
\end{align}
Hence, \eqref{ta22} follows from \eqref{ta2} and \eqref{lts2}.

\pfitemref{TA2s}
Since $f$ is symmetric and $f_2(X)=0$ a.s., we also have $f_1(X)=0$ a.s.,
and thus (since we assume $\mu=0$) $f=f_{12}$. 
Furthermore, the definitions \eqref{hff} and \eqref{czf} yield
\begin{align}\label{mag3}
  \hf\bigpar{(x,t),(y,u)} &= f(x,y),
\\\label{mag4}
  \czf\bigpar{(x,t),(y,u)} &= f(x,y)\sgn(u-t).
\end{align}
Hence, if we define the symmetric
function $\Hs$ on $(\oi\times\setiii)^2$ by the block
form
\begin{align}\label{mag5}
  \Hs\bigpar{(t,\ga),(u,\gb)}:=\frac12\matrixx{-1&\sgn(u-t)\\-\sgn(u-t)&1},  
\qquad t,u\in\oi;
\ga,\gb\in\setiii
,\end{align}
then
we can regard the kernel $G$ in \eqref{mag1} as the tensor product
$f\tensor \Hs$ in the natural way, and thus
\begin{align}\label{mag6}
  T_G = T_f\tensor T_\Hs,
\end{align}
where both $T_f$ and $T_\Hs$ are self-adjoint.
It follows,
by the same argument as in the proof of \refL{Lanti} in a similar case,
that if $T_f$ has the nonzero eigenvalues $\set{\gl_r:r\in\cR}$
and $\Hs$ has the nonzero eigenvalues $\set{\rho_s:s\in\cS}$, then
$T_G$ has the  nonzero eigenvalues $\set{\gl_r\rho_s:r\in\cR,s\in\cS}$.
The eigenvalues $\rho_s$ are given by \refL{LAs} below.
Hence, the limit in \eqref{ta2} is
\begin{align}\label{magg}
  W&
=\sumrR\sumkk\tfrac12\gl_r\frac{2}{(2k-1)\pi}(\zeta_{r,k}^2-1)
=\sumrR\gl_r\sumkk\frac{1}{(2k-1)\pi}(\zeta_{r,k}^2-1)
\notag\\&
=:\sumrR\gl_r\xeta_r,
\end{align}
where $\xeta_r$ are \iid{} and by \refL{Larea} have the distribution
\eqref{xeta}. 

Finally, \eqref{ta2s2} follows from \eqref{ta2s} and \eqref{xetavar} together
with \eqref{ta22}.

\pfitemref{TA2a}
Since $f$ is antisymmetric, we again see that $f_2(X)=0$ a.s.\ implies 
$f_1(X)=0$ a.s., and thus  $f=f_{12}$. 
The definitions \eqref{hff} and \eqref{czf} now yield
\begin{align}\label{mag3a}
  \hf\bigpar{(x,t),(y,u)} &= f(x,y)\sgn(u-t).
\\\label{mag4a}
  \czf\bigpar{(x,t),(y,u)} &= f(x,y).
\end{align}
Hence, if we now define the antisymmetric
function $\Ha$ on $(\oi\times\setiii)^2$ by the block form
\begin{align}\label{mag5a}
  \Ha\bigpar{(t,\ga),(u,\gb)}:=\frac12\matrixx{-\sgn(u-t)&1\\-1 &\sgn(u-t)}, 
\qquad t,u\in\oi;
\ga,\gb\in\setiii
\end{align}
then
we have again \eqref{mag6},
where now both $T_f$ and $T_\Ha$ are anti-self-adjoint, and thus have imaginary
eigenvalues. 
It follows
that if $T_f$ has the nonzero eigenvalues $\set{\ii\gl'_q:q\in\cQ}$
and $\Ha$ has the nonzero eigenvalues $\set{\ii\rho_s:s\in\cS}$, then
$T_G$ has the  nonzero eigenvalues $\set{-\gl'_q\rho_s:q\in\cQ,s\in\cS}$.
The eigenvalues $\ii\rho_s$ are given by \refL{LAa} below.
Hence, 
the limit in
\eqref{ta2} is, cf.\ \eqref{magg},
\begin{align}\label{magga}
  W&
=\sum_{q\in\cQ}\sumkk\gl'_q\frac{-1}{(2k-1)\pi}(\zeta_{r,k}^2-1)
=:\sum_{q\in\cQ}\gl'_q\xeta_q,
\end{align}
where $(\xeta_q)_{q\in\cQ}$ are \iid{} and by \refL{Larea} have the distribution
\eqref{xeta}. 
Furthermore, since $f$ is real, the nonzero eigenvalues of $T_f$
are symmetric with respect to the real axis, and are thus 
$(\pm\ii\gla_q)_{q=1}^\Qp$.
Hence, we can rewrite \eqref{magga} as
\begin{align}\label{maggb}
  W&
=\sumqQp(\gla_q\xeta_q-\gla_q\xeta'_q)
=\sumqQp\gla_q(\xeta_q-\xeta'_q)
=\sumqQp\gla_q\eta_q,
\end{align}
where all $\xeta_q$ and $\xeta'_q$ are \iid{} with the distribution
\eqref{xeta}, and thus
$\eta_q:=\xeta_q-\xeta'_q$ are \iid{} with the distribution \eqref{area} by
\eqref{xeta}, see also \refL{Larea}.
 
Finally, \eqref{ta2a2} follows from \eqref{ta2a} and \eqref{etavar} together
with \eqref{ta22}.

\pfitemref{TAv}
Follows from  \eqref{Aaida} and the central limit theorem.
\end{proof}

\begin{remark}\label{R+-}
As noted after \eqref{j+-}, the limits in distribution
in \refT{TA} have to be symmetric  random variables.
This is obvious in \ref{TA1} and \ref{TA2s}--\ref{TAv}, but in \ref{TA2}, it
implies that the set of eigenvalues $(\gl_r)_1^R$ has to be symmetric, i.e., 
$(\gl_r)_1^R$ equals $(-\gl_r)_1^R$ up to order.
This can also be seen from \eqref{ta21}: the \mpp{} bijection
$(\hx,\hy)\mapsto(\hy,\hx)$ of $\hX\times\hX$ onto itself induces a unitary
equivalence of the operator \eqref{ta21} with its negative.
As a consequence, the limit in \eqref{ta2} can also be written
\begin{align}
W= \sum_{\gl_r>0}\tfrac12\gl_r(\zeta_r^2-\tzeta^2_r),
\end{align}
where $\zeta_r,\tzeta_r$ are independent standard normal variables.
\end{remark}

\begin{theorem}\label{TA-+}
With notations and assumptions 
as in \refS{Sprel},
the following holds.
\begin{romenumerate}
\item \label{TA-+0}
We have
  \begin{align}\label{ta-+00}
  \E \Ump_n(f) = \lrflfrac n2\mu
.\end{align}

\item \label{TA-+1}
All conclusions of \refT{TA}\ref{TA1}--\ref{TAv} hold also 
for $\Ump(f)$ instead of $\Upm(f)$, provided $f_2$ is replaced by $f_1$ and
conversely. 
\end{romenumerate}
\end{theorem}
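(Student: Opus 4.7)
The plan is to reduce the theorem to \refT{TA} by applying the symmetry identity \eqref{fi2} with $f$ replaced by $\xf$, and using $(\xf)^*=f$, to obtain
\begin{align*}
  \Ump_n(f)\eqd (-1)^n\Upm_n(\xf).
\end{align*}
Every conclusion for $\Ump_n(f)$ then follows from the corresponding conclusion for $\Upm_n(\xf)$ in \refT{TA}. Since $(\xf)_1=f_2$, $(\xf)_2=f_1$, $\E\xf(X_1,X_2)=\mu$, and $(\xf)_{12}=(f_{12})^*$, the roles of $f_1$ and $f_2$ are systematically swapped in every conclusion, exactly as asserted in \ref{TA-+1}.

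For \ref{TA-+0}, \eqref{ta00} applied to $\xf$ yields $\E\Upm_n(\xf)=(-1)^n\floor{n/2}\mu$, so the displayed identity gives $\E\Ump_n(f)=\floor{n/2}\mu$. For the distributional limits, the factor $(-1)^n$ is harmless, because each limit appearing in \refT{TA}\ref{TA1}--\ref{TAv} is symmetric about $0$: this is clear in the Gaussian, $\xeta$, and $\eta$ cases \ref{TA1}, \ref{TA2s}, \ref{TA2a}, \ref{TAv}, and for the general degenerate case \ref{TA2} it was noted in \refR{R+-}. The variance formulas and degeneracy hypotheses transfer routinely; e.g.\ in \ref{TA1} one has $\gss=\tfrac13\E[(\xf)_2(X)^2]=\tfrac13\E[f_1(X)^2]$, and the hypothesis $(\xf)_2(X)=0$ becomes $f_1(X)=0$ in \ref{TA2}--\ref{TA2a}, and similarly $\gss_1=\tfrac12\Var[f_2(X)]$ in \ref{TAv}.

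It remains to identify the coefficients $(\gl_r)$ and $(\gla_q)$ produced by \refT{TA} for the kernel $\xf$ with the ones described in \refT{TA-+}. The measure-preserving involution $(x,t)\mapsto(x,1-t)$ on $\hcX$ induces a unitary $U$ on $L^2(\hcX)$ with $U^2=I$, and a direct comparison with \eqref{hff} and \eqref{czf} shows that, for any $g\in L^2(\cX^2)$, the $\widehat{(\cdot)}$-kernel of $g^*$ equals the $\widehat{(\cdot)}$-kernel of $g$ composed with $(t,u)\mapsto(1-t,1-u)$, while the corresponding $\widecheck{(\cdot)}$-identity picks up an extra minus sign. Consequently the $2\times 2$ block operator \eqref{ta21} for $\xf$ is unitarily conjugate (via $\mathrm{diag}(U,-U)$) to the one for $f$, and thus has the same nonzero eigenvalues. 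The subcases \ref{TA2s} and \ref{TA2a} are simpler still: if $f$ is symmetric then $T_\xf=T_f$, while if $f$ is antisymmetric then $T_\xf=-T_f$, and both of these preserve the relevant eigenvalue multisets. The main obstacle is just this bookkeeping with $\widehat{(\cdot)}$ and $\widecheck{(\cdot)}$ under $f\mapsto\xf$; once it is settled, the rest is immediate substitution into \refT{TA}.
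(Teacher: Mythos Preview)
Your proof is correct and follows essentially the same route as the paper: reduce via \eqref{fi2} to $\Upm_n(\xf)$, observe $(\xf)_1=f_2$, $(\xf)_2=f_1$, $(\xf)_{12}=(f_{12})^*$, dispose of the factor $(-1)^n$ using the symmetry of the limits (as in \refR{R+-}), and check that the block operator \eqref{ta21} for $(f_{12})^*$ is unitarily equivalent to the one for $f_{12}$. The only cosmetic difference is in this last step: the paper exhibits the equivalence via the kernel identity $\widehat{(f_{12})^*}\bigl((x,t),(y,u)\bigr)=\widehat{f_{12}}\bigl((y,1-u),(x,1-t)\bigr)$ (and similarly for $\widecheck{\phantom{f}}$), whereas you make the unitary explicit as $\mathrm{diag}(U,-U)$ with $U$ induced by $(x,t)\mapsto(x,1-t)$; both lead to the same operator relations $T_{\widehat{(f_{12})^*}}=UT_{\widehat{f_{12}}}U$ and $T_{\widecheck{(f_{12})^*}}=-UT_{\widecheck{f_{12}}}U$.
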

\begin{proof}
This follows from \eqref{fi2} and \refT{TA} (applied to $\xf$)
together with the following observations.
First,
$\xf_1=f_2$, $\xf_2=f_1$, and $(\xf)_{12}=\xfiii$.
Secondly,
the factor $(-1)^n$ in \eqref{fi2} does not matter in \ref{TA-+1},
since the limits are symmetric, see \refR{R+-}.
Thirdly,
the operator \eqref{ta21} and its counterpart for $\xf_{12}$ 
are unitarily equivalent and thus have the same eigenvalues, 
with the unitary equivalence induced by the \mpp{} bijection 
$\bigpar{(x,t),(y,u)}\mapsto\bigpar{(y,1-u),(x,1-t)}$
of $\hcX^2$ onto itself, 
since
\begin{align}
  \widehat{\xf_{12}}\bigpar{(x,t),(y,u)}
=
  \widehat{f_{12}}\bigpar{(y,1-u),(x,1-t)}
\end{align}
and similarly for $\widecheck{\xf_{12}}$ and $\widecheck{f_{12}}$.
\end{proof}

\begin{lemma}\label{LAs}
  Let $\Hs$ be the symmetric function on $(\oi\times\setiii)^2$ given by
  \eqref{mag5}.
Then the eigenvalues of $T_\Hs$, all simple, are
\begin{align}\label{las}
\pm \frac{2}{(2k-1)\pi}, \qquad k=1,2,3,\dots
.\end{align}
\end{lemma}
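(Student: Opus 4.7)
The plan is to identify $L^2(\oi\times\setiii)$ with $L^2(\oi)^2$, representing a function $\phi$ by the pair $(\phi_1,\phi_2)$ with $\phi_j(t):=\phi(t,j)$. Writing out $T_\Hs\phi=\gl\phi$ using the block form \eqref{mag5} gives the coupled system
\begin{align*}
\gl\phi_1(t)&=-\tfrac12\innprod{\phi_1,1}+\tfrac12 A\phi_2(t),\\
\gl\phi_2(t)&=-\tfrac12 A\phi_1(t)+\tfrac12\innprod{\phi_2,1},
\end{align*}
where $A\psi(t):=\int_0^1\sgn(u-t)\psi(u)\dd u=\int_t^1\psi-\int_0^t\psi$ and $\innprod{\psi,1}:=\int_0^1\psi$.

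Next I would diagonalize the $\setiii$-factor by introducing $\phi_\pm:=(\phi_1\pm\phi_2)/\sqrt2$. Adding and subtracting the two equations and using the elementary identities $-\innprod{\psi,1}-A\psi(t)=-2\int_t^1\psi$ and $-\innprod{\psi,1}+A\psi(t)=-2\int_0^t\psi$ turns the system into the decoupled pair
\begin{align*}
\gl\phi_+(t)&=-\int_t^1\phi_-(u)\dd u,\\
\gl\phi_-(t)&=-\int_0^t\phi_+(u)\dd u.
\end{align*}
For $\gl\neq0$ both right-hand sides are absolutely continuous, hence $\phi_\pm$ are $C^1$. Differentiating gives $\gl\phi_+'=\phi_-$ and $\gl\phi_-'=-\phi_+$, and eliminating $\phi_-$ yields the ODE $\gl^2\phi_+''=-\phi_+$. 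Its general solution is $\phi_+(t)=A\cos(t/\gl)+B\sin(t/\gl)$, so $\phi_-(t)=\gl\phi_+'(t)=-A\sin(t/\gl)+B\cos(t/\gl)$. Evaluating the integral equations at $t=0$ and $t=1$ supplies the boundary conditions $\phi_-(0)=0$ and $\phi_+(1)=0$, which force $B=0$ and $\cos(1/\gl)=0$. Thus $1/\gl=(2k-1)\pi/2$ for some $k\in\bbZ$, and letting $k$ range over $\bbZ$ gives precisely the eigenvalues $\pm2/\bigpar{(2k-1)\pi}$, $k\ge1$. For each such $\gl$ the eigenspace is spanned (through $\phi_+$) by the single function $\cos\bigpar{(2k-1)\pi t/2}$, so each eigenvalue is simple.

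As a sanity check I would verify that $\sum_r\gl_r^2=2\sum_{k\ge1}4/\bigpar{(2k-1)^2\pi^2}=(8/\pi^2)(\pi^2/8)=1$ equals the squared Hilbert--Schmidt norm $\int|\Hs|^2=1$, confirming that the list accounts for all nonzero eigenvalues (any remaining spectrum lies in the kernel, which plays no role in \refT{TA}\ref{TA2s}). The only real obstacle is bookkeeping: keeping the boundary conditions straight after the change of variables, and correctly separating the constant term $\innprod{\phi_j,1}$ from the $t$-dependent part $A\phi_j(t)$. Once those steps are handled, the problem reduces to an elementary Sturm--Liouville computation.
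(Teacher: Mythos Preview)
Your argument is correct and follows the same route as the paper: write the eigenvalue equation as a coupled integral system, differentiate to obtain a second-order ODE, and read off the eigenvalues from the boundary conditions, which force $\cos(1/\gl)=0$. Your change of variables to $\gf_\pm=(\gf_1\pm\gf_2)/\sqrt2$ is a minor but pleasant simplification over the paper's direct treatment of $(\gf_1,\gf_2)$: it makes the boundary conditions $\gf_-(0)=0$ and $\gf_+(1)=0$ separate immediately, whereas the paper arrives at the same condition via $\gf_1(0)=\gf_2(0)$ and $\gf_1(1)+\gf_2(1)=0$. One small point: the lemma asserts that the listed values are \emph{all} eigenvalues, so to match it fully you should also check that $\gl=0$ is not an eigenvalue (the paper does this in one line); your Hilbert--Schmidt norm check only rules out further \emph{nonzero} eigenvalues.
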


As a sanity check we note that if the eigenvalues in \eqref{las} are
enumerated $(\gl_r)_1^\infty$, then
\begin{align}
  \sum_{r}\gl_r^2 =\frac{4}{\pi^2}\cdot2\sumk\frac{1}{(2k-1)^2}=1
=\int_{(\oi\times\setiii)^2}|\Hs|^2
\end{align}
since $|\Hs|=\frac12$ and $\oi\times\setiii$ has measure 2; this agrees with
\eqref{lts2}. 

\begin{proof}
  An eigenfunction of $T_\Hs$, with eigenvalue $\gl$, 
is a pair $(\gf_1,\gf_2)$ of functions on $\oi$
  such that
  \begin{align}\label{em01}
2\gl\gf_1(t) &
=- \intoi \gf_1(u)\dd u-\int_0^t\gf_2(u)\dd u+\int_t^1 \gf_2(u)\dd u,
\\\label{em02}
2\gl\gf_2(t) &= \int_0^t\gf_1(u)\dd u-\int_t^1\gf_1(u)\dd u+\intoi \gf_2(u)\dd u.
  \end{align}
Suppose $\gl\neq0$.
It then follows, as in the proof of \refL{L3}, that $\gf_1$ and $\gf_2$ are
continuously differentiable, and differentiation yields
  \begin{align}\label{em1}
2\gl\gf'_1(t)& =- 2\gf_2(t),
\\\label{em2}
2\gl\gf'_2(t) &= 2\gf_1(t).
  \end{align}
Let $\go:=1/\gl$. Then the system \eqref{em1}--\eqref{em2} becomes
\begin{align}\label{em3}
\gf_1'&=-\go\gf_2,\\
\gf_2'&=\go\gf_1.\label{em4} 
\end{align}
It follows that $\gf_1''=-\go^2\gf_1$, and thus, for some constants $a$ and
$b$, using also \eqref{em3} again, 
\begin{align} \label{em5}
  \gf_1(t)&=a\cos(\go t)+b\sin(\go t),
\\\label{em6}
  \gf_2(t)&=a\sin(\go t)-b\cos(\go t),
\end{align}
Furthermore, taking $t=0$ in \eqref{em01} and \eqref{em02}, and integrating
using \eqref{em1}--\eqref{em2},
\begin{align}\label{em7}
2\gl\gf_1(0)
= 2\gl\gf_2(0)&
=-\intoi \gf_1(t)\dd t +\intoi\gf_2(t)\dd t
\notag\\&
=-{\gl}\bigpar{\gf_2(1)-\gf_2(0)}
 -{\gl}\bigpar{\gf_1(1)-\gf_1(0)},
\end{align}
which implies 
$\gf_1(0)=\gf_2(0)$ and thus $a=-b$, and then $\gf_1(1)+\gf_2(1)=0$ and thus,
by adding \eqref{em5} and \eqref{em6},
\begin{align}\label{em8}
  \cos(\go)=0.
\end{align}
Hence, for some $k\in\bbZ$,
\begin{align}
  \go = (k+\tfrac12)\pi.
\end{align}
Conversely, it follows that for each such $\go$, we obtain an eigenfunction
with eigenvalue $\gl=1/\go$ by \eqref{em5}--\eqref{em6} with $b=-a$. 
This shows that eigenvalues are \eqref{las};
we see also that these eigenvalues are simple.

For completeness we note that $0$ is not an eigenvalue, since 
\eqref{em01}--\eqref{em02} with  $\gl=0$ imply that the \rhs{s} do not
depend on $t$, and thus $\gf_1(t)=\gf_2(t)=0$ a.e.
\end{proof}

\begin{lemma}\label{LAa}
  Let $\Ha$ be the antisymmetric function on $(\oi\times\setiii)^2$ given by
  \eqref{mag5a}.
Then the eigenvalues of $T_\Ha$, all simple, are
\begin{align}\label{laa}
\pm \frac{2\ii}{(2k-1)\pi}, \qquad k=1,2,3,\dots
.\end{align}
\end{lemma}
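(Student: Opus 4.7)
The plan is to follow the same route as in the proof of \refL{LAs}, adjusting for the different placement of $\sgn(u-t)$ in the block matrix \eqref{mag5a} defining $\Ha$. Componentwise, an eigenfunction of $T_\Ha$ with eigenvalue $\gl$ is a pair $(\gf_1,\gf_2)$ of functions on $\oi$ satisfying
\begin{align*}
2\gl\gf_1(t)&=-\int_0^1\sgn(u-t)\gf_1(u)\dd u+\int_0^1\gf_2(u)\dd u,\\
2\gl\gf_2(t)&=-\int_0^1\gf_1(u)\dd u+\int_0^1\sgn(u-t)\gf_2(u)\dd u.
\end{align*}
For $\gl\neq0$, the same smoothing argument as in \refL{LAs} shows that $\gf_1$ and $\gf_2$ are continuously differentiable.

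The key observation is that differentiation in $t$ now \emph{decouples} the system: the constant off-diagonal integrals drop out, and one is left with $\gl\gf_1'=\gf_1$ and $\gl\gf_2'=-\gf_2$, forcing $\gf_1(t)=ae^{t/\gl}$ and $\gf_2(t)=be^{-t/\gl}$ for constants $a,b\in\bbC$. To impose the boundary conditions, I would substitute $t=0$ and $t=1$ back into the original integral equations. At $t=0$ we have $\sgn(u-0)\equiv 1$ on $\ooi$, so both right-hand sides coincide and we get $\gf_1(0)=\gf_2(0)$, hence $a=b$; at $t=1$ we have $\sgn(u-1)\equiv-1$ on $\oi$, and the two right-hand sides become negatives of each other, giving $\gf_1(1)+\gf_2(1)=0$. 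Combined with $a=b$, this reduces to the transcendental condition $\cosh(1/\gl)=0$, whose zeros are $1/\gl=(k+\tfrac12)\pi\ii$ for $k\in\bbZ$. Solving for $\gl$ yields precisely the values in \eqref{laa}, and each eigenvalue is simple because the solution space is the one-parameter family parametrised by $a$.

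The case $\gl=0$ is excluded as in \refL{LAs}: setting $\gl=0$ in the integral equations forces both right-hand sides to be independent of $t$, whereupon the equations themselves force $\gf_1=\gf_2=0$ a.e. The only (minor) obstacle is the sign bookkeeping in the boundary step: unlike in \refL{LAs}, differentiation of the integral equations does not automatically package the boundary data, so one must return to the original equations at $t=0$ and $t=1$ separately to extract the two conditions that pin down $a,b$ and determine the admissible $\gl$. As a sanity check, the eigenvalues satisfy $\sum_r|\gl_r|^2=(8/\pi^2)\sum_{k\ge1}(2k-1)^{-2}=1=\int_{(\oi\times\setiii)^2}|\Ha|^2$, in line with the Hilbert--Schmidt identity underlying \eqref{lts2}.
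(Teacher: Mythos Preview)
Your proof is correct and follows essentially the same approach as the paper. The only cosmetic differences are that the paper introduces the substitution $\go:=-\ii/\gl$ (turning your condition $\cosh(1/\gl)=0$ into $\cos(\go)=0$), and that the paper extracts the second boundary condition by integrating the ODEs rather than by evaluating the original integral equations at $t=1$; both routes yield $\gf_1(0)=\gf_2(0)$ and $\gf_1(1)+\gf_2(1)=0$.
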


\begin{proof}
We argue as in the proof of \refL{LAs}.
  An eigenfunction of $T_\Ha$, with eigenvalue $\gl$, 
is now a pair $(\gf_1,\gf_2)$ of functions on $\oi$
  such that
  \begin{align}\label{el01}
2\gl\gf_1(t) &
=
\int_0^t\gf_1(u)\dd u-\int_t^1\gf_1(u)\dd u+\intoi \gf_2(u)\dd u,
\\\label{el02}
2\gl\gf_2(t) &= 
- \intoi \gf_1(u)\dd u-\int_0^t\gf_2(u)\dd u+\int_t^1 \gf_2(u)\dd u.
  \end{align}
Suppose $\gl\neq0$.
It then follows, that $\gf_1$ and $\gf_2$ are
continuously differentiable, and differentiation yields
  \begin{align}\label{el1}
2\gl\gf'_1(t)& =2\gf_1(t),
\\\label{el2}
2\gl\gf'_2(t) &= -2\gf_2(t).
  \end{align}
Let $\go:=-\ii/\gl$. Then the system \eqref{el1}--\eqref{el2} becomes
\begin{align}\label{el3}
\gf_1'&=\ii\go\gf_1,\\
\gf_2'&=-\ii\go\gf_2.\label{el4} 
\end{align}
Thus, for some constants $a$ and
$b$,
\begin{align} \label{el5}
  \gf_1(t)&=a e^{\ii\go t},
\\\label{el6}
  \gf_2(t)&=be^{-\ii\go t}.
\end{align}
Furthermore, taking $t=0$ in \eqref{el01} and \eqref{el02}, and integrating
using \eqref{el1}--\eqref{el2},
\begin{align}\label{el7}
2\gl\gf_1(0)
= 2\gl\gf_2(0)&
=-\intoi \gf_1(t)\dd t +\intoi\gf_2(t)\dd t
\notag\\&
=-{\gl}\bigpar{\gf_1(1)-\gf_1(0)}
 -{\gl}\bigpar{\gf_2(1)-\gf_2(0)},
\end{align}
which implies first $\gf_1(0)=\gf_2(0)$ and thus  $a=b$,
 and then $\gf_1(1)+\gf_2(1)=0$ and thus,
by adding \eqref{el5} and \eqref{el6},
\begin{align}\label{el8}
2\cos(\go)=e^{\ii\go}+e^{-\ii\go}=0.
\end{align}
Hence, for some $k\in\bbZ$,
\begin{align}
  \go = (k+\tfrac12)\pi.
\end{align}
Conversely, it follows that for each such $\go$, we obtain an eigenfunction
with eigenvalue $\gl=-\ii/\go$ by \eqref{el5}--\eqref{el6} with $b=a$. 
This shows that eigenvalues are \eqref{laa};
we see also that these eigenvalues are simple.

By the same argument as in the proof of \refL{LAs},
$0$ is not an eigenvalue.
\end{proof}

\begin{remark}\label{Runitary}
  \refLs{LAs} and \ref{LAa} show that the self-adjoint operators $T_\Hs$ and
  $\ii T_\Ha$ have the same eigenvalues, and thus are unitarily equivalent.
Operators with the same eigenvalues appear also in \refE{ESJ22}
and \refL{L3} below.
A unitary equivalence between any two of these
is given by mapping eigenfunctions to eigenfunctions
with the same eigenvalue but, in spite of the simple explicit forms of the
eigenfunctions found in the proofs (and the great similarties between the
proofs above and below),
we do not see a simple explicit form of
the unitary equivalences except for the case of \refL{LAs} and \refE{ESJ22}.
\end{remark}

\section{A short summary}\label{SSummary}
Comparing \refTs{TU}, \ref{TC}, \ref{TB}, \ref{TA}, and \ref{TA-+}
 we see  strong similarities but also differences.
The nondegenerate cases are similar, with variances of
order $n^3$ and  normal limits; the proofs show that in these case, the
dominating terms are linear combinations of $f_1(X_i)$ and $f_2(X_i)$,
but the details differ because the linear combinations that appear are different
for the different \Ustat{s}. For $\Ump$ and $\Upm$ this is due to partial
cancellations caused by the alternating signs, and for $\Umm$ this
cancellation is (almost) complete so that the nondegenerate case does not
occur at all.
The asymptotic variances in \eqref{gss1}, \eqref{gss1c}, and \eqref{Agss1}
are in general different, but note that in the special case when $f$ is
antisymmetric, and thus $f_2=-f_1$, 
and further $f_1\neq0$, it follows that
$U_n(f)$ and $\Upm_n(f)$ (and $\Ump_n(f)$)
have the same asymptotic variance and thus the same asymptotic distribution,
while $\Uc_n(f)$ is of the degenerate type, and has the same asymptotic
distribution as $\Umm_n(f)$.

For the degenerate cases the general pattern is again similar, but details
differ in more subtle and nonobvious ways. 
To see this clearer, we collect in the corollaries below the results for
the degenerate case when $f$ is symmetric or antisymmetric; we further
assume for simplicity $f=f_{12}$, 
i.e., $\mu=0=f_1=f_2$, since the degenerate cases
always reduce to this case. Note that the variables $f_{12}(X_i,X_j)$ are
orthogonal by \refL{LH}; hence, 
when $f=f_{12}$,
alternating signs do not change
the variance of the \Ustat{} and do not
cause any cancellation, although they may affect the asymptotic distribution.
We let $\zeta_r\in N(0,1)$, $\eta_q$, and $\xeta_r$ be independent copies of
the variables in \refSS{SS3}.

\begin{corollary}
  \label{Csymm}
Suppose that $f$ is symmetric and that $f=f_{12}$. 
Let the nonzero eigenvalues of $T_f$ be $(\gl_r)_1^R$.
Then
\begin{align}\label{sum1}
  n\qw{U_n(f)} \dto W:=\sumrR\tfrac12\gl_r(\zeta_r^2-1).
\end{align}
The same result holds for $\Uc_n$ and $\Umm_n$.
Furthermore,
\begin{align}\label{sum2}
  n\qw{\Upm_n(f)} \dto W:=\sumrR\gl_r\xeta_r
.\end{align}
The same result holds for $\Ump_n$.
\end{corollary}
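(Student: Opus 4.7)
My plan is to assemble the corollary from the earlier theorems: all four results should drop out of the corresponding ``degenerate, symmetric kernel'' subcases, once one checks that the hypothesis $f=f_{12}$ triggers each of those subcases. First I would record the elementary reduction: $f=f_{12}$ forces $\mu=0$ and $f_1(\XX)=f_2(\XX)=0$ a.s.\ (these are the defining properties of $f_{12}$, cf.\ \eqref{ai2}), so every centering $\E[\,\cdot\,]$ appearing in the statements of \refTs{TU}, \ref{TC}, \ref{TB}, \ref{TA}, \ref{TA-+} is zero. Consequently the normalization $n\qw$ (without subtracting the mean) is exactly the normalization used in those theorems.

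For $U_n(f)$ I would invoke \refT{TU}\ref{TU2s} directly: $f$ is symmetric, $f_1=f_2=0$, and the $\gl_r$ in \eqref{tu2} are the nonzero eigenvalues of $T_{f-\mu}=T_f$, giving \eqref{sum1}. For $\Umm_n(f)$ the same argument with \refT{TB}\ref{TB2s} delivers \eqref{sum1}, with $T_{f_{12}}=T_f$. For $\Uc_n(f)$ the limit in \refT{TC}\ref{TC2} contains both a symmetric-part and an antisymmetric-part contribution, with coefficients coming from $T_{\fiiis}$ and $T_{\fiiia}$ respectively. The key observation is that when $f$ is symmetric, so is $f_{12}$ (by \refR{Rsymm2}), hence $\fiiis=f_{12}=f$ and $\fiiia\equiv 0$. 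The second sum in \eqref{tc2} is then empty and what remains is precisely $\sum_r\tfrac12\gl_r(\zeta_r^2-1)$, proving \eqref{sum1} for $\Uc_n$.

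For $\Upm_n(f)$ the hypotheses of \refT{TA}\ref{TA2s} are met ($f$ is symmetric, $f_2(X)=0$ a.s.), and the conclusion \eqref{ta2s} is exactly \eqref{sum2}. For $\Ump_n(f)$ I would appeal to \refT{TA-+}\ref{TA-+1}, which asserts the conclusions of \refT{TA} for $\Ump$ with the roles of $f_1$ and $f_2$ interchanged; since $f$ is symmetric, $f_1=f_2$, so the interchange is immaterial, and the same limit \eqref{sum2} is obtained.

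There is no real obstacle: the corollary is a pure specialization of results already established. The only substantive point, and the one I would state explicitly in the written proof, is the vanishing of the antisymmetric contribution for $\Uc_n$ via the observation that $f_{12}$ inherits symmetry from $f$; the other four cases are immediate citations.
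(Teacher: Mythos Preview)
Your proposal is correct and matches the paper's intent: the corollary is stated in the summary section precisely as a collation of the degenerate symmetric-kernel subcases of \refTs{TU}, \ref{TC}, \ref{TB}, \ref{TA}, and \ref{TA-+}, and your argument assembles exactly those citations, including the one nontrivial observation (for $\Uc_n$) that $\fiiia=0$ forces the second sum in \eqref{tc2} to vanish.
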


The limit distributions in \eqref{sum1} and \eqref{sum2} are different by
\refR{Rsum} (unless $f$ is constant and thus both limits are 0).

\begin{corollary}
  \label{Casymm}
Suppose that $f$ is symmetric and that $f=f_{12}$. 
Let the nonzero
eigenvalues of $T_f$ with positive imaginary part be $(\ii\gla_q)_1^\Qp$.
Then
\begin{align}\label{sum3}
  n\qw{U_n(f)} \dto W:=\sumqQp\gla_q\eta_q.
\end{align}
The same result holds for $\Uc_n$, $\Umm_n$, $\Upm_n$, and $\Ump_n$.
\end{corollary}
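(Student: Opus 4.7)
The plan is to read each of the five conclusions off one of the previously established theorems; the hypotheses that $f$ is antisymmetric (this must be the intent; ``symmetric'' in the statement appears to be a typo, in line with the label \ref{Casymm} and the imaginary spectrum) and that $f = f_{12}$ activate precisely the antisymmetric degenerate subcases already handled. First I would record two consequences of the hypotheses: antisymmetry gives $\mu = \E f(X_1,X_2) = 0$, and the equation $f = f_{12}$ together with \eqref{a4}--\eqref{a6} forces $f_1 = f_2 = 0$. Hence the degeneracy conditions $f_1(\XX) = f_2(\XX) = 0$ in \refT{TU}\ref{TU2}, $f_1(\XX) + f_2(\XX) = 0$ in \refT{TC}\ref{TC2}, and $f_2(\XX) = 0$ in \refT{TA}\ref{TA2} are all verified.

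With these reductions in place, the limit for $U_n(f)$ is exactly \refT{TU}\ref{TU2a}; the limit for $\Umm_n(f)$ is exactly \refT{TB}\ref{TB2a} (noting that $T_{f_{12}} = T_f$ under $f = f_{12}$); and the limit for $\Upm_n(f)$ is exactly \refT{TA}\ref{TA2a}. For $\Ump_n(f)$ I would invoke \refT{TA-+}\ref{TA-+1}, whose prescription is to swap $f_1$ and $f_2$ in the corresponding statement for $\Upm_n$; since here $f_1 = f_2 = 0$, the swap is vacuous and the same limit $\sumqQp \gla_q \eta_q$ results, with the $\gla_q$ still read off the spectrum of $T_f$ on $\LLC(\cX,\nu)$.

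The only case requiring a brief extra remark is $\Uc_n(f)$. Applying \refT{TC}\ref{TC2} produces the two-part limit
\begin{align}
n\qw \Uc_n(f) \dto \sumrR \tfrac12 \gls_r (\zeta_r^2 - 1) + \sumqQp \gla_q \eta_q,
\end{align}
in which $(\gls_r)$ are the nonzero eigenvalues of $T_{\fiiis}$ and the $\ii\gla_q$ are the eigenvalues with positive imaginary part of $T_{\fiiia}$. Because $f$ is antisymmetric and equal to $f_{12}$, the Hoeffding remainder $f_{12}$ is itself antisymmetric, so $\fiiis = \tfrac12(f_{12} + \xf_{12}) = 0$ and $\fiiia = f_{12} = f$. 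Thus the chi-squared sum disappears and $T_\fiiia = T_f$, producing exactly the claimed limit with the same sequence $(\gla_q)$ as in the other four cases.

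No step presents a genuine obstacle: the proof is pure bookkeeping, and the one conceptual point is the observation that antisymmetry collapses the symmetric half of the cyclic limit in \refT{TC}\ref{TC2}, leaving only the stochastic-area contribution. Consequently the five versions of the $U$-statistic are all asymptotically governed by a single object, the anti-self-adjoint operator $T_f$ on $\LLC(\cX,\nu)$, and they share the common limit $\sumqQp \gla_q \eta_q$.
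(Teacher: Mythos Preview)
Your proposal is correct and matches the paper's intent: the corollary is presented there as a summary with no separate proof, meant to be read off directly from \refT{TU}\ref{TU2a}, \refT{TC}\ref{TC2}, \refT{TB}\ref{TB2a}, \refT{TA}\ref{TA2a}, and \refT{TA-+} under the antisymmetric degenerate hypotheses, exactly as you do. You are also right that ``symmetric'' in the statement is a typo for ``antisymmetric''; the label, the imaginary spectrum, and the surrounding discussion all confirm this.
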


Recall from \refR{RB} that the fact that two of our \Ustat{s} have the same
limit distribution does not imply that they converge jointly to the same random
variable. On the other hand, \refR{Rsymm} shows that this happens in the
case of $U_n(f)$ and $\Uc_n(f)$ for symmetric $f$.
See further \refSS{SSjoint}.

Consider now the general case $f=f_{12}$, without any symmetry assumption.
In this case, 
we have seen in \refT{TC}\ref{TC2} 
and \refR{Rdecouple}
that 
the contributions to $\Uc_n(f)$ from the 
symmetric and antisymmetric parts are asymptotically independent;
hence the asymptotic distribution in the general case follows from the special
cases in \refCs{Csymm} and \ref{Casymm}. 
However, as is shown in \refE{ESJ22}, this does not hold for
$U_n$, $\Umm_n$, $\Upm_n$, or $\Ump_n$.

\section{Examples}\label{Sex}

Examples with nondegenerate, and thus normal, limits are straightforward,
so we concentrate on limits of the more complicated degenerate type.
Again,
we let $\zeta\in N(0,1)$, $\eta$, and $\xeta$,
with or without subscripts, be independent copies of
the variables in \refSS{SS3}.

\begin{example}\label{E1}
  Consider first the simple example 
where $f(x,y)=xy$
and
$X$ is real-valued with finite
variance $\gss_X>0$.
Let $\mu_X:=\E X$.
Then the Hoeffding decomposition
\eqref{a2}--\eqref{a6} is given by
$\mu=\mu_X^2$, $f_1(x)=f_2(x)=\mu_X(x-\mu_X)$, and
$f_{12}(x,y)=(x-\mu_X)(y-\mu_X)$. Hence, if $\mu_X\neq0$,
we have the nondegenerate case with variance of order $n^3$ and normal
limits for $U_n$, $\Uc_n$, $\Ump_n$, and $\Upm_n$. Recall that $\Umm_n$
never has this behaviour; in this example 
\refT{TB}\ref{TB2} shows that $\Umm_n(f)$ has the
same asymptotic distribution as $\Umm_n(f_{12})$, which is equivalent to
replacing $X$ by $X-\mu_X$.

Suppose now that $\mu_X=0$. The integral operator $T_f$ is
\begin{align}
T_f g(x) = x\int y\,g(y)\dd \nu(y) = x\E[Xg(X)].  
\end{align}
This operator has a single nonzero eigenvalue $\gss_X$ with eigenvector
$\gf(x)=x$; hence \refT{TU} yields
\begin{align}
  n\qw U_n\dto \tfrac12\gss_X (\zeta^2-1),
\end{align}
where $\zeta\in N(0,1)$.
\refTs{TC} and \ref{TB} yield the same result for $\Uc_n$ and $\Umm_n$
(\cf{} \refRs{Rsymm} and \ref{RB}),
while \refTs{TA}\ref{TA2s} and \ref{TA-+} yield
\begin{align}
  n\qw \Upm_n\dto \gss_X \xeta
\end{align}
and the same for $\Ump_n$, with $\xeta$ as in \eqref{xeta}.
These results also follow from \refC{Csymm}.
\end{example}

\begin{example}\label{ESJ22}
Let $X=(\xi_1,\xi_2)$ be a random vector in $\cX=\bbR^2$, with $\xi_1$ and
$\xi_2$ independent and both having variance 1 and symmetric distributions,
i.e., $\xi_j\eqd-\xi_j$.
(For example, $\xi_1$ and $\xi_2$ may both be $N(0,1)$, or uniformly
distributed on $\pm1$.) 
Let
\begin{align}\label{ewi1}
  f\bigpar{(x_1,x_2),(x'_1,x'_2)}:=x_1x'_2.
\end{align}
Then, writing $X_n=(\xi_{n1},\xi_{n2})$,
\begin{align}\label{ewi2}
  U_n(f)=\sum_{1\le i<j\le n}\xi_{i1}\xi_{j2}
=\sumjn \xi_{j2}\sum_{i=1}^j\xi_{i1}.
\end{align}
Note that $\Upm_n(f)$ is given by the same sum with $\xi_{j2}$ replaced by
$(-1)^j\xi_{j2}$. Since we assume
$(-1)^j\xi_{j2}\eqd\xi_{j2}$, and all $\xi_{ik}$ are independent, it follows
that
$\Upm_n(f)$ has the same distribution as $U_n(f)$.
The same argument applies also to $\Ump_n(f)$, now replacing
$\xi_{i1}$ by $(-1)^i\xi_{i1}$, and to $\Umm_n(f)$ (doing both).
Thus, for any $n\ge1$,
\begin{align}\label{ewi3}
  U_n(f)\eqd\Umm_n(f)\eqd\Upm_n(f)\eqd\Ump_n(f).
\end{align}
It is shown in \cite[p.~83]{SJ22} that
\begin{align}\label{ewi4}
  n\qw U_n(f)\dto \intoi B_1(t)\dd B_2(t)=:\xeta,
\end{align}
where $B_k(t)$ are independent Brownian motions and thus $\xeta$ is as in
\refSS{SS3}.
(This is a consequence of \eqref{ewi2} and Donsker's theorem applied to 
$\sumin \xi_{i1}$ and $\sumin \xi_{i2}$; see \cite{SJ22} for the nontrivial
technical details.) By \eqref{ewi3}, we have the same asymptotic
distribution \eqref{ewi4} for $\Umm_n(f)$, $\Upm_n(f)$, and $\Ump_n(f)$.

We can also obtain this limit from \refT{TU}\ref{TU2} (or
\refT{TB}\ref{TB2}) above; note that $\mu=f_1=f_2=0$ so $f=f_{12}$.
It follows from the definitions \eqref{hff} and \eqref{Tg} that $T_\hf$ is
the integral operator on $L^2(\bbR^2\times\oi)$ given by
\begin{align}\label{ewi5}
  T_\hf \gF(x_1,x_2,t)&
=\int_t^1 \E\bigsqpar{x_1 \xi_2 \gF(\xi_1,\xi_2 ,u)}\dd u
+
\int_0^t \E\bigsqpar{\xi_1x_2 \gF(\xi_1,\xi_2 ,u)}\dd u
\notag\\&
=x_1\int_t^1 \E\bigsqpar{\xi_2 \gF(\xi_1,\xi_2 ,u)}\dd u
+
x_2\int_0^t \E\bigsqpar{\xi_1\gF(\xi_1,\xi_2 ,u)}\dd u.
\end{align}
Consequently, an eigenfunction with a nonzero eigenvalue $\gl$ has to be of
the form $\gF(x_1,x_2,t)=x_1\psi_1(t)+x_2\psi_2(t)$. Substitution in
\eqref{ewi5} then yields
\begin{align}\label{ewi6}
  \gl x_1\psi_1(t)+\gl x_2\psi_2(t)
=x_1\int_t^1\psi_2(u)\dd u
+x_2\int_0^t\psi_1(u)\dd u
\end{align}
and thus
\begin{align}\label{ewi7}
\gl \psi_1(t)&=\int_t^1\psi_2(u)\dd u,
\qquad
 \gl\psi_2(t)=\int_0^t\psi_1(u)\dd u.
\end{align}
Consequently,
\begin{align}\label{ewi8}
\gl \psi'_1(t)&=-\psi_2(t),
\qquad
 \gl\psi'_2(t)=\psi_1(t).
\end{align}
It is easily seen that \eqref{ewi8}, with the boundary values
$\psi_1(1)=\psi_2(0)=0$ given by \eqref{ewi7}, is solved by, 
with $\go:=1/\gl$,
\begin{align}\label{ewi9}
 \psi_1(t)&=C \cos(\go t),
\qquad
\psi_2(t)=C\sin(\go t),
\end{align}
where we must have
\begin{align}\label{ewi10}
  \cos(\go)=0.
\end{align}
Hence $\go=(k+\frac12)\pi$, $k\in\bbZ$.
The nonzero eigenvalues $\gl=1/\go$ are thus
$\set{\frac{2}{(2k+1)\pi}:k\in\bbZ}$,
and \eqref{tu2} yields
\begin{align}\label{ewi11}
  n\qw U_n(f)\dto \sumkk  \frac{1}{(2k+1)\pi}(\zeta^2_k-1),
\end{align}
which by \refL{Larea}  has the same distribution as $\xeta$ in \eqref{xeta},
which proves \eqref{ewi4}.
As noted above, we have the same limit \eqref{ewi4} also for $\Upm_n(f)$ and
$\Ump_n(f)$. In principle, this can be shown as above from \refT{TA}, but
that would require studying the more complicated integral operator \eqref{ta21}.

Consider now the symmetric and antisymmetric parts; for convenience we
consider
\begin{align}\label{ewis}
 2\fs\bigpar{(x_1,x_2),(x'_1,x'_2)}&=x_1x'_2+x_2x'_1,
\\\label{ewia}
 2\fa\bigpar{(x_1,x_2),(x'_1,x'_2)}&=x_1x'_2-x_2x'_1.
\end{align}
Arguing as after \eqref{ewi5} above, we see that it suffices to consider the
subspace of linear functions $\set{ax_1+by_1:a,b\in\bbC}\subset \LLC(\cX,\nu)$.
This subspace is two-dimensional, and it is easy to see, in analogy to
\eqref{ewi5} but simpler, that in this subspace, $T_{2\fs}$ and $T_{2\fa}$
act by the matrices 
$\smatrixx{0&1\\1&0}$
and
$\smatrixx{0&1\\-1&0}$; the nonzero eigenvalues are thus $\pm1$ and
$\pm\ii$, respectively.
Consequently, \refT{TU}\ref{TU2s} and \ref{TU2a} yield
\begin{align}\label{qj1}
  2n\qw U_n(\fs)&
=  n\qw U_n(2\fs)
\dto \tfrac12(\zeta_1^2-\zeta_2^2),
\\\label{qj2}
  2n\qw U_n(\fa)&
=  n\qw U_n(2\fa)
\dto \eta.
\end{align}
This was also shown in \cite[p.~83]{SJ22}, representing the limits as
$\intoi B_1(t)\dd B_2(t)\pm\intoi B_2(t)\dd B_1(t)$ 
in analogy to \eqref{ewi4} above.
See also \eqref{yor}, which implies that if we denote these limits in
\eqref{qj1}--\eqref{qj2} by
$\Ws$ and $\Wa$, then, as noted in \cite{SJ22}, their joint \chf{} is
\begin{align}\label{york}
\E \bigsqpar{\exp \xpar{\ii s\Ws+\ii t\Wa}}    
=\Bigpar{\cosh^2(t)+s^2\frac{\sinh^2(t)}{t^2}}\qqw.
\end{align}
In particular, \eqref{qj1}--\eqref{qj2} hold jointly, 
but the limits $\Ws$ and $\Wa$ are \emph{not} independent.

By \refC{Csymm},
\eqref{qj1} holds also for $\Uc_n(\fs)$ and $\Umm_n(\fs)$,
while, by \eqref{sum2},
\begin{align}\label{qj3}
  2n\qw \Upm_n(\fs)&\dto \xeta_1-\xeta_2\eqd \eta
\end{align}
and the same for $\Ump_n(\fs)$.
Similarly, by  \refC{Casymm}, 
\eqref{qj2} holds also for $\Uc_n(\fa)$, $\Umm_n(\fa)$, $\Upm_n(\fa)$, and
$\Ump_n(\fa)$.
Note that \eqref{qj1} and \eqref{qj3} show that $U_n(\fs)$ and $\Upm_n(\fs)$
have different limits in distribution; in particular, \eqref{ewi3} cannot be
extended to $\fs$.

We have so far ignored $\Uc_n(f)$, but armed with these results for $\fs$ and
$\fa$, we obtain from \eqref{tc2} and \refL{Larea}
\begin{align}\label{qk1}
  n\qw\Uc_n(f) &\dto 
\tfrac14(\zeta_1^2-\zeta_2^2) + \tfrac12\eta
\notag\\&
\eqd
\tfrac14(\zeta_1^2-\zeta_2^2) 
+ \sumk\frac{1}{2(2k-1)\pi}
\bigpar{\zeta^2_{k,1}+\zeta^2_{k,2}-\zeta^2_{k,3}-\zeta^2_{k,4}}
\end{align}
This differs, by \eqref{larea0} and the uniqueness assertion in \refR{Rsum},
from the limit $\xeta$ found in \eqref{ewi3}--\eqref{ewi4} for 
$U_n$, $\Umm_n$, $\Upm_n$, and $\Ump_n$.
(Note that \eqref{qk1} contains some coefficients that are rational, and
some that are rational multiples of $1/\pi$.)
It follows similarly that the decoupling of the contributions from the symmetric
and antisymmetric parts that is seen in \refT{TC}\ref{TC2} is unique to
$\Uc_n$, and does not hold for the other \Ustat{s} considered here.
\end{example}

\begin{remark}\label{RWP}\kolla
The formula \eqref{york} for 
the joint asymptotic distribution
of $U_n(\fs)$ and $U_n(\fa)$
can also be obtained from \refT{TU} applied to $s\fs+t\fa$, 
see \refApp{AYor}.
$\Umm_n(\fs)$ and $\Umm_n(\fa)$ have the
same joint asymptotic distribution, by the same sign-change argument as for
\eqref{ewi3}, but note that this argument does not apply to
$\Upm_n(\fs)$ and $\Upm_n(\fa)$, as is shown by \eqref{qj3}.
\refT{TA} applied to $s\fs+t\fa$ shows that
$\Upm_n(\fs)$ and $\Upm_n(\fa)$ have a joint asymptotic distribution, which
in principle can be found by 
arguments similar to \refApp{AYor} (but for the more complicated operator
\eqref{ta21});  we have not pursued this and leave it as an open problem
\end{remark}

\begin{remark}\label{Rperm}
  In this paper, we generally assume that $X_1,\dots,X_n$ are \iid{} random
  variables. However, the definitions \eqref{U1}--\eqref{Uc} and
  \eqref{U3}--\eqref{U--} make sense for any deterministic or random
  sequence $X_1,\dots,X_n$.
One interesting instance of this is to let $\bgs=(X_1,\dots,X_n)$ be a
permutation of \set{1,\dots,n}. If further $f(x,y):=\indic{x>y}$; then 
$U_n(f;\bgs)$ is the number of inversions in $\bgs$; we will in the
following two examples consider the equivalent (and more symmetric)
\begin{align}\label{fsgn}
f(x,y):=\sgn(x-y)=2\indic{x>y}-1.
\end{align}
We furthermore take $\bgs$ to be a uniformly random permutation in the
symmetric group $\fS_n$.
It is well-known that $\bgs$ can be constructed as the ranks of a sequence
$X_1,\dots,X_n$ of \iid{} random variables with, say, a uniform distribution
on $\oi$. Since $f$ only cares about the order relations, it follows that then
$U_n(f;\bgs)=U_n(f;X_1,\dots,X_n)$,
and similary for the other \Ustat{s}; hence we are back to the case of \iid{}
$X_i$. 
\end{remark}

\begin{example}[writhe]\label{Ewrithe}
\citet{writhe} defines the \emph{writhe} of a permutation $\bgs\in \fS_{2n+1}$
as, in our notation in \refR{Rperm}, $\Uc_{2n+1}(f;\bgs)$, where 
$f(x,y):=\sgn(x-y)$ as in \eqref{fsgn},
and  
studied this in the case of a uniformly random permutation $\bgs\in\fS_{2n+1}$.
This was motivated by the study of a model for random knots; see
\cite{writhe} for details. (Only permutations of odd lengths appear in this
model.) 

The main result of \cite{writhe} finds the asymptotic distribution of the writhe
as \ntoo; this is proved using the method of moments, together with a
lengthy (but interesting) combinatorial calculation of the moments.
(The proof actually uses the equivalence with $\Umm_{2n+1}$ in \refP{PC=B},
which was given in \cite{writhe} for this case; the moment calculations
there are done for $\Umm_{2n+1}$.)

As said in \refR{Rperm}, the distribution of the writhe
equals the distribution of $\Uc_{2n+1}(f;X_1,\dots,X_{2n+1})$ where 
$X_i\in U(0,1)$ are \iid.
Consequently, we may apply \refT{TC}. 
The kernel \eqref{fsgn} is alternating, and thus $\mu=0$.
Furthermore, 
\begin{align}\label{ez1}
  f_1(x)&=\E[\sgn(x-X)] = 2x-1,
\\\label{ez2}
f_2(x)&=-f_1(x)=1-2x,
\\\label{ez12}
f_{12}(x,y)&=\sgn(x-y)-2x+2y,
\end{align}
and we find $\Var[f_{12}(X_1,X_2)]=\frac{1}{3}$.
We apply \refT{TC}\ref{TC2}, with $\fiiis=0$ and $\fiiia=f_{12}$, and it
remains to find the eigenvalues of $T_{\fiiia}=T_{f_{12}}$.
Note that the eigenvalues of $T_f$ are given in \refL{L3}, but here we
consider $T_{f_{12}}$.

Suppose that $\gf$ is an eigenfunction of $T_{f_{12}}$ with eigenvalue
$\ii\gl$. (All eigenvalues are imaginary, since $f_{12}$ is antisymmetric.)
Then
\begin{align}\label{ez3}
  \ii\gl\gf(x) = \int_0^x \gf(y)\dd y  - \int_x^1 \gf(y)\dd y 
-2x\intoi\gf(y)\dd y + 2\intoi y\gf(y)\dd y.
\end{align}
Suppose that $\gl\neq0$.
It follows as in the proof of \refL{L3} that $\gf$ is continuously
differentiable, and then
\begin{align}\label{ez4}
  \ii\gl\gf'(x)=2\gf(x)-2\intoi \gf(y)\dd y.
\end{align}
Let $\go:=-2/\gl$. Then \eqref{ez4} implies $\gf''(x)=\ii\go\gf'(x)$, and thus
\begin{align}\label{ez5}
  \gf(x)=Ae^{\ii\go x}+B
\end{align}
for some constants $A$ and $B$.
Simple calculus  shows that then \eqref{ez3} holds if and only if
$B=0$ and
\begin{align}\label{ez6}
  e^{\ii\go}=1,
\end{align}
and thus
\begin{align}\label{ez7}
  \go = 2\pi k,\qquad k\in\bbZ.
\end{align}
Consequently, if $\go\neq0$ is as in \eqref{ez7}, then $e^{\ii\go x}$ is an
eigenfunction with eigenvalue $\ii\gl=-2\ii/\go = -\ii/(\pi k)$,
and
the nonzero eigenvalues of $T_{f_{12}}$
are $\set{\frac{\ii}{\pi k}: 0\neq k\in \bbZ}$.
In the notation of \refT{TC} we thus have 
$(\gla_q)_1^\Qp=(\frac{1}{\pi k})_1^\infty$,
and thus \eqref{tc2} yields
\begin{align}\label{ez8}
  \frac{1}{n}\Uc_n(f) \dto \sumk \frac{1}{\pi k}\eta_k,
\end{align}
where $\eta_k$ are \iid{} with the stochastic area distribution
\eqref{area}.
This is equivalent to the limit theorem in \cite[Corollary 2]{writhe}
(there proved by the method of moments)
with the limit represented as in 
\cite[Section 5.3]{writhe}. 
For properties and other descriptions of the limit, see \cite{writhe}.
\end{example}

\begin{example}[alternating inversion number]\label{Einv}
We continue to consider uniformly random permutations in $\fS_n$ 
as in \refR{Rperm} and \refE{Ewrithe}, using the kernel $f$ in \eqref{fsgn}.
Let $X_1,\dots,X_n$ be \iid{} with $X_i\in U\oi$ as in \refR{Rperm}.
Then, as noted above, $U_n(f;\bgs)=U_n(f;X_1,\dots,X_n)$ is, 
up to a trivial linear transformation, the classical inversion number, and 
$\Uc_n(f;\bgs)=\Uc_n(f;X_1,\dots,X_n)$ is the writhe studied in \cite{writhe}.
Furthermore, \cite{writhe} also defines the 
\emph{alternating inversion number} as $\Ump_n(f;\bgs)$
and the
\emph{bi-alternating inversion number} as $\Umm_n(f;\bgs)$.
The alternating inversion number $\Ump_n(f;\bgs)$ was 
(up to the same trivial linear transformation) 
earlier introduced and studied by \cite{Chebikin}, who showed that it has
the same distribution as the ``inversion number'' $U_n(f;\bgs)$.
(This is easily seen by regarding the random permutation $\bgs$
as a random linear order on $[n]$, and constructing it
recursively, by inserting a new element $n$ in a random position
relative to the previous $n-1$ elements; 
the number of new inversions is uniformly random in \set{0,\dots,n-1}, and 
it follows by induction that $U_n(f;\bgs)$ and $\Upm_n(f;\bgs)$ have the
same distribution. And so has $\Ump_n(f;\bgs)$ by \eqref{fi2} since these
variables are symmetric.)

As a consequence,  $\Upm_n(f;\bgs)$ and $\Ump_n(f;\bgs)$ have the same
asymptotic normal distribution as $U_n(f;\bgs)$. From the perspective of our
theorems, this follows because $f$ is antisymmetric, as noted in
\refS{SSummary}. 

In contrast, as shown by \cite{writhe}, 
the bi-alternating inversion number $\Umm_n(f;\bgs)$ has a different limit
distribution, of the degenerate type. As noted above, \cite{writhe} showed
that $\Umm_{2n+1}(f;\bgs)\eqd\Uc_{2n+1}(f;\bgs)$, which generalizes to
\refP{PC=B}; \cite{writhe} also shows 
$\Umm_{2n}(f;\bgs)\eqd\Umm_{2n+1}(f;\bgs)$, which in contrast 
seems to be a very special property for this $f$.
As a consequence, 
the bi-alternating inversion number has the same asymptotic distribution as
the writhe in \eqref{ez8}, which was found by different methods in
\cite{writhe}. 
\end{example}

\section{Further results and open problems}\label{Sfurther}

\subsection{Joint convergence}\label{SSjoint}
We may also consider joint convergence of the different \Ustat{s}
$U_n(f)$, $\Uc_n(f)$, $\Ump_n(f)$, $\Upm_n(f)$, and $\Umm_n(f)$ for the same
kernel $f$.
In the nondegenerate cases, this is straightforward, since the proofs above
in all cases approximate the \Ustat{} by a linear combination of $f_1(X_i)$
and $f_2(X_i)$, and the central limit theorem implies that
these linear combinations converge jointly (after 
normalization by $n^{-3/2}$) to some jointly normal limits.

Also in the degenerate cases, or when some \Ustat{s} are nondegenerate and
some degenerate (and we thus normalize them differently),
the methods above make it in principle possible to study also 
asymptotic joint distributions of $U_n$, $\Ump$, $\Upm$, and $\Umm$,
see \refR{RB} for a simple example; furthermore,
it seems possible to include also $\Uc_n$ by considering 
$\tX_i:=(X_{2i-1},X_{2i},X_{2i-1+n/2},X_{2i+n/2})$
(for $n$ divisible by 4).
We leave such extensions to the reader.

Note that in all results above showing  convergence in distribution to some
limit, 
both in nondegenerate and degenerate cases,
the theorems and proofs also show convergence of first and second moments.
Hence, if $U'_n$ and $U''_n$ denote two of the \Ustat{s} in this paper,
and $\tU'_n$ and $\tU''_n$ are the corresponding normalized variables, 
then the squares $|\tU'_n|^2$  and $|\tU''_n|^2$ are uniformly integrable,
see \eg{} \cite[Theorem 5.5.9]{Gut}. It follows, by the \CSineq, that
also the product $\tU'_n\tU''_n$ is uniformly integrable, and thus
if $\tU_n$ and $\tU''_n$ have limits in distribution jointly, then the
covariance of their limits is the limit of their covariances.
In particular, when the limits are jointly normal, we can easily find their
joint distribution.

In the case when both $U'_n$ and $U''_n$ have limits of the degenerate type,
and we thus may replace $f$ by $f_{12}$ (up to negligible terms), it follows
easily from \refL{LH} that, 
except in the case $(U_n,\Uc_n)$,
the two different \Ustat{s} have covariance of
order $o(n^2)$ because of cancellations caused by the alternating signs.
Hence, except for $(U_n,\Uc_n)$, any joint limits have to be uncorrelated.
(In particular, the two limits cannot be the same random variable, unless
they are 0.) We conjecture that, more strongly, in these cases there is
joint convergence to independent limits, but we leave this as an open problem.
On the other hand, $(U_n,\Uc_n)$ is different:
if we further assume that $f$ is
symmetric, then $U_n(f)-\Uc_n(f)$ is negligible, see \refR{Rsymm},
and thus
$U_n(f)$ and $\Uc_n(f)$ jointly converge, after normalization, to the same
limit. 

\subsection{Strong law of large numbers}\label{SSstrong}
We stated in \eqref{tu0} the  weak law of large numbers for
classical \Ustat{s}. There is also a well-known corresponding
strong law of large numbers,
see
\cite{Hoeffding-LLN} (the symmetric case) and \cite{SJ332} (the general case):
\begin{align}\label{Ulln}
  \frac{1}{\binom n2} U_n(f)\asto \mu.
\end{align}
This extends to the alternating \Ustat{s} in the following, less interesting
form; recall  that by \eqref{tb00} and \eqref{ta00}, the expectations are
$O(n)$. 
\begin{theorem}\label{TSLNN}
  We have, as \ntoo, for any $f\in L^2$,
\begin{align}\label{BAlln}
  \frac{1}{\binom n2} \Umm_n(f)\asto 0.
\end{align}
The same holds for $\Ump_n$ and $\Upm_n$.
\end{theorem}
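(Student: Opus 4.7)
The plan is to combine the Hoeffding decompositions already derived for the three alternating \Ustat{s} with two standard almost sure convergence tools: Kolmogorov's strong law for independent (not necessarily identically distributed) finite-variance variables, and the Chebyshev--Borel--Cantelli argument along the full sequence $n\ge1$.

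The case of $\Umm_n$ is immediate from the estimates in \refT{TB}: by \eqref{tb00} and \eqref{tb20} we have $\E[\Umm_n(f)]=O(n)$ and $\Var[\Umm_n(f)]=O(n^2)$, so
\begin{equation*}
  \E\sqpar{\bigpar{\Umm_n(f)/\tbinom{n}{2}}^2} = O(n\qww).
\end{equation*}
Since $\sum n\qww<\infty$, Chebyshev's inequality combined with the Borel--Cantelli lemma, applied for each rational $\eps>0$, gives \eqref{BAlln} directly.

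For $\Upm_n$ such a direct variance bound is too crude, because in the nondegenerate case (\refT{TA}\ref{TA1}) the variance is of order $n^3$, giving only $n\qw$ after normalization, which is not summable. Instead, the plan is to insert the Hoeffding decomposition \eqref{Aaida} into $\Upm_n(f)/\tbinom{n}{2}$ and treat the four resulting contributions separately. The constant $\E[\Upm_n(f)]$ is $O(n)$, hence negligible. The $f_1$-contribution has coefficients bounded by $1$, so its absolute value is at most $\sum_{i=1}^n|f_1(X_i)|=O(n)$ a.s.\ by the classical strong law. The $f_2$-contribution $\sum_{j=1}^n(-1)^j(j-1)f_2(X_j)$ is a sum of independent mean-zero variables with variances $(j-1)^2\Var f_2(X)$, and since $\sum_{j\ge1}(j-1)^2/j^4<\infty$, Kolmogorov's strong law with normalizing sequence $b_j=j^2$ makes this sum $o(n^2)$ a.s. Finally, the $f_{12}$-contribution equals $\Upm_n(f_{12})$, which by \eqref{ta20} has mean $0$ and variance $O(n^2)$, so a further application of Chebyshev--Borel--Cantelli makes it $o(n^2)$ a.s. Adding the four contributions gives $\Upm_n(f)/\tbinom{n}{2}\asto 0$.

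For $\Ump_n$ one cannot use the identity \eqref{fi2} pathwise, since that equality is only in distribution. The same plan applies with the roles of $f_1$ and $f_2$ exchanged: substituting the Hoeffding decomposition into \eqref{U-+} produces a constant term of order $n$, an $f_2$-contribution $\sum_{j\text{ even}}f_2(X_j)$ with bounded coefficients (handled by the classical SLLN), an $f_1$-contribution $\sum_{i=1}^{n-1}(-1)^{i+1}(n-i)f_1(X_i)$ with coefficients of order $n$, and an $f_{12}$-contribution with variance $O(n^2)$ (handled by Chebyshev--Borel--Cantelli). The linearly-weighted $f_1$-contribution is treated by the identity
\begin{equation*}
  \sum_{i=1}^{n-1}(-1)^{i+1}(n-i)f_1(X_i)
  = n\sum_{i=1}^{n-1}(-1)^{i+1}f_1(X_i)-\sum_{i=1}^{n-1}(-1)^{i+1}if_1(X_i),
\end{equation*}
where the first sum is $o(n)$ a.s.\ by Kolmogorov's SLLN for independent finite-variance variables, and the second is $o(n^2)$ a.s.\ by Kolmogorov's SLLN with normalizing sequence $b_i=i^2$. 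The main obstacle is precisely this linearly-weighted marginal term: Chebyshev alone yields variance of order $n\qw$ after normalization, which is not summable, so lifting convergence in probability to almost sure convergence genuinely requires Kolmogorov's strong law rather than a bare second-moment Borel--Cantelli estimate.
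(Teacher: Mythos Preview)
Your proof is correct, and it follows the same overall plan as the paper --- substitute the Hoeffding decomposition and handle the four contributions separately --- but two of the four terms are treated differently.

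For the linearly-weighted marginal term (the $f_2$-sum in $\Upm_n$, the $f_1$-sum in $\Ump_n$), the paper rewrites $\sum_j(-1)^j(j-1)f_2(X_j)$ via partial summation as $\sum_{i=1}^n(S_n-S_i)$ with $S_k:=\sum_{j\le k}(-1)^jf_2(X_j)$, and then uses the classical strong law on the even and odd subsequences to get $S_k=o(k)$ a.s. You instead apply Kolmogorov's strong law directly with $b_j=j^2$; this is equally valid and arguably cleaner.

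For the $f_{12}$-term, the paper invokes the representation \eqref{AUU} (resp.\ \eqref{BUU}) as a classical $U$-statistic in the paired variables $\tX_i$ and then appeals to the strong law \eqref{Ulln} for classical $U$-statistics. You bypass this entirely: since $\Var[\Upm_n(f_{12})]=O(n^2)$ by \refL{LH}, the normalized second moment is $O(n\qww)$, summable, and Chebyshev--Borel--Cantelli finishes. This is more elementary and self-contained; the paper's route imports a nontrivial external result (the strong law for asymmetric $U$-statistics from \cite{SJ332}), whereas yours needs only the orthogonality in \refL{LH}. Your direct treatment of $\Umm_n$ via the global variance bound \eqref{tb20} is likewise simpler than running the paper's full argument in that case.
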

\begin{proof}
We treat $\Upm_n$; the same argument works for $\Ump_n$ and $\Umm_n$
with minor modifications (and some simplifications).
We use \eqref{Aaida} and treat the terms on the \rhs{} separately.
As just noted, the expectation $\E[\Upm_n(f)]=O(n)$, so the first term in
\eqref{Aaida} is $o(n^2)$. The second term has varianc $O(n)$, and 
it follows from Chebyshev's inequality and the Borel--Cantelli lemma that   
it is $o(n^2)$ a.s. For the third term, let 
\begin{align}
S_k:=\sum_{j=1}^k(-1)^jf_2(X_j),  
\end{align}
and note that by the law of large numbers, applied to even and odd indices
separately,
$S_k/k\asto0$, i.e.,  $S_k=o(k)$ a.s.
Then the third term in \eqref{Aaida} can be written
\begin{align}
  \sumjn\sum_{i=1}^{j-1}(-1)^j f_2(X_j)
=
  \sumin\sum_{j=i+1}^{n}(-1)^j f_2(X_j)
=\sumin(S_n-S_i)=o(n^2)\quad\text{a.s.}
\end{align}
The double sum in \eqref{Aaida} is $\Upm_n(f_{12})$.
For even $n$ we use \eqref{AUU}, where 
the sum again has variance $O(n)$ and thus a.s.\ is $o(n^2)$,
and $U_n(F)/\binom n2\asto0$ by \eqref{Ulln} since
$\E[F(\tX_1,\tX_2)]=0$ by \eqref{AF}.
Finally, for odd $n$, the result follows from \eqref{Aaid3}, where the sum
again has variance $O(n)$.
\end{proof}

Note that this argument does not work for $\Uc_n$, since the definition
\eqref{CtX} involves $n$ explicitly, and we therefore cannot apply \eqref{Ulln}
to $U_n(F)$. Moreover, it is not clear that it is interesting to study
the sequence $(\Uc_n(f))_{n=1}^\infty$ as a stochastic process, since the
point of the definition \eqref{Uc} of $\Uc_n$ is that the indices are
regarded as elements of $\bbZ/n\bbZ$. Nevertheless, out of mathematical
curiosity, we might ask:
\begin{problem}\label{PCLLN}
  Does \eqref{Ulln} hold for $\Uc_n(f)$?
\end{problem}

\subsection{Functional limit theorems}
As another aspect of regarding
the sequence $(U_n(f))_{n=1}^\infty$ and its variants as stochastic
processes, 
we may ask
for functional limit theorems of Donsker-type.
For the classical \Ustat{} $U_n(f)$, it is known that,
extending \eqref{tu1},
$n\qqw\bigpar{U_{\floor{nt}}(f)-\frac{n^2}2t^2\mu}$, regarded as a stochastic
process with continuous parameter $t\ge0$, 
converges in $D\ooo$, as  \ntoo, to a continuous centred Gaussian process;
similarly, in the degenerate case,
$n\qw\bigpar{U_{\floor{nt}}(f)-\E[U_{\floor{nt}}(f)]}$
converges to a continuous process whose
marginals are of the type \eqref{tu2};
see \eg{} \cite{MillerSen}, \cite{Neuhaus1977}, \cite{Hall1979}, 
\cite[Remark 11.11]{SJIII}  
(the symmetric case); 
\cite[p.~83]{SJ22}, 
\cite[Remarks 11.11 and 11.25]{SJIII},  
\cite[Theorem 3.2]{SJ332} 
(the general case).
It seems likely that this too extends to the alternating \Ustat{s} by 
arguing using \eqref{BUU} and \eqref{AUU}, but we have not checked the
details and leave this to the reader.

As in \refSS{SSstrong}, and for the same reason,
this argument does not apply to $\Uc_n$; moreover,
it seems less interesting to consider functional limit theorems for $\Uc_n$.

\subsection{Moment convergence}\label{SSmom}
As noted in \refSS{SSjoint},
in all results above showing  convergence in distribution,
we also have convergence of first and second moments.

For higher moments, it is known that for the classical normal limit in
\eqref{tu1}, and any $p\in(2,\infty)$,
all moments and absolute moments of order $\le p$ converge provided
$\E|f(X_1,X_2)|^p<\infty$, see \cite[Theorem 3.15]{SJ332}.
It seems likely that this extends to the cyclic and alternating \Ustat{s}
considered here, using the methods in the proofs above, but we have not
checked the details and leave this to the reader.

We conjecture that there also is a similar result showing moment convergence
in the degenerate cases, but in this case we are not even aware of a general
result for the classical \Ustat{} and moment convergence in \eqref{tu2}.

\appendix
\section{Proof of \refT{TU}}\label{AA}
We give here a proof of \refT{TU}, which  contains
some known result on the asymptotic distribution of $U$-statistics of the
standard type \eqref{U2}
in the special case of order $m=2$. 
We give a proof for completeness, and because we reuse parts of it for other
proofs; 
we also find it instructive to give complete proofs in the case $m=2$, which
avoids some minor complications for larger $m$.
For previous proofs and for
the general case with arbitrary $m$, see, for example,
\cite{Hoeffding,Gregory1977,RubinVitale1980,DynkinM} for the symmetric case,
and \cite[Chapter 11.1--2]{SJIII} for the general (asymmetric) case.

\begin{proof}[Proof of \refT{TU}]
To prove \refT{TU}, we note first that \eqref{tu00} is immediate from the
definitions \eqref{U2} and \eqref{a1mu}.

We have, by \eqref{U2} and \eqref{a2},
\begin{align}\label{winston}
  U_n(f) &= \sum_{1\le i<j\le n}f(X_i,X_j)
\notag\\&
=\tbinom n2f_\emptyset +\sumin (n-i)f_1(X_i) + \sumjn (j-1)f_2(X_j)
+ \sum_{1\le i<j\le n}f_{12}(X_i,X_j)
\notag\\&
=: \SO_n+\SI_n+\SII_n+\SQ_n.
\end{align}
Here $\SO_n=\tbinom n2\mu=\E U_n(f)$. 
In the sequel, we may replace $f$ by $f-\mu$; this does not affect $f_1$,
$f_2$, or $f_{12}$. (Note also that when $f$ is antisymmetric, 
$\mu=\E f(X_1,X_2)=-\E f(X_2,X_1)=-\mu $
and thus $\mu=0$ so $f-\mu=f$ is still antisymmetric.)
We may thus without loss of generality assume that $\mu=0$, and
hence $\E U_n(f)=0$.

We next study the variances of the sums in \eqref{winston}.
The random vectors $\bigpar{f_1(X_i),f_2(X_i)}$ are \iid, with mean 0 and finite
second moments. Hence,
\begin{align}\label{ab1}
  \Var [\SI_n] &
= \sumin (n-i)^2 \Var[ f_1(\XX)] \sim \tfrac{1}{3}n^3\Var[ f_1(\XX)] ,
\\\label{ab2}
  \Var [\SII_n] &
= \sumjn (j-1)^2 \Var[ f_2(\XX)] \sim \tfrac{1}{3}n^3\Var[ f_2(\XX)] ,
\\\label{ab3}
  \Cov[\SI_n,\SII_n] &
= \sumin(n-i) (i-1) \Cov[f_1(\XX),f_2(\XX)] 
\notag\\&\qquad
\sim \tfrac{1}{6}n^3\Cov[f_1(\XX),f_2(\XX)] .
\end{align}
Consequently, recalling \eqref{gss1},
\begin{align}\label{ab4}
  \Var[\SI_n+\SII_n]&= n^3
\bigpar{\tfrac{1}{3}\Var[ f_1(\XX)] +\tfrac{1}{3}\Var[ f_2(\XX)] 
+\tfrac{2}{6}\Cov[f_1(\XX),f_2(\XX)]+o(1)}
\notag\\&
=n^3\bigpar{\gss+o(1)}.
\end{align}

Turning to $\SQ_n$, we note that the terms $f_{12}(X_i,X_j)$ are identically
distributed and have mean 0, and that they
are orthogonal; this follows from \eqref{a6E}, which implies that
$\E [f_{12}(X_1,X_2)\mid X_1]= \E [f_{12}(X_1,X_2)\mid X_2]=0$, and thus, 
for example,
\begin{align}\label{wba1}
\E\sqpar{f_{12}(X_1,X_2)f_{12}(X_1,X_3)}&
=
\E\bigsqpar{\E[f_{12}(X_1,X_2)\mid X_1]\E [f_{12}(X_1,X_3)\mid X_1]}
\notag\\&
=0.
\end{align}
Consequently,
\begin{align}\label{wba2}
  \Var[\SQ_n]=
\sum_{1\le i<j\le n}\Var[f_{12}(X_i,X_j)]
=
\tbinom n2 \Var[f_{12}(X_1,X_2)].
\end{align}

The variance of $\SQ_n$ is thus $O(n^2)$, while $\SI_n+\SII_n$ typically has
a larger variance of order $n^3$. Hence, the sum \eqref{winston} is
dominated by $\SI_n+\SII_n$, except in the case that $\gss=0$ when these terms
vanish (as we will see below), and therefore \eqref{winston} reduces to $\SQ_n$.
This is the reason for the two different cases \ref{TU1} and \ref{TU2} in
\refT{TU}; the generic  case \ref{TU1}, 
i.e., assuming $\gss>0$, the nondegenerate case,
and the degenerate case \ref{TU2} with $\gss=0$.
We treat these cases separately below, after completing the proof of \ref{TU0}.

\pfitemref{TU0}
We have already shown \eqref{tu00}. Furthermore, \eqref{ab4} and
\eqref{wba2} imply
\begin{align}
  \Var [U_n(f)] = O(n^3),
\end{align}
and thus \eqref{tu0} follows. (Actually, with convergence in $L^2$.)

\pfitemref{TU1}
We apply the standard central limit theorem for triangular arrays 
(see for example \cite[Theorem 7.2.4]{Gut} or \cite[Theorem 5.12]{Kallenberg})
to $\SI_n+\SII_n$.
It is easily verified that the triangular arrays
$\bigpar{n^{-3/2}(n-i)f_1(X_i)}_{i\le n}$ and
$\bigpar{n^{-3/2}(i-1)f_2(X_i)}_{i\le n}$
satisfy the Lindeberg condition, and thus so does the summed array
$\bigpar{n^{-3/2}\xpar{(n-i)f_1(X_i)+(i-1)f_2(X_i)}}_{i\le n}$.
Consequently, the central limit theorem yields, using \eqref{ab4}, 
\begin{align}\label{clt}
n^{-3/2}(\SI_n+\SII_n)\dto N(0,\gss).
\end{align}
(If $\gss=0$ then \eqref{clt} still holds, as a trivial consequence of
\eqref{ab4}.) 
Furthermore, as noted above, \eqref{wba2} implies that 
$\Var[n^{-3/2}\SQ_n]=n^{-3}\Var[\SQ_n]\to0$, and thus
$n^{-3/2}\SQ_n\pto0$. Hence, \eqref{tu1} follows from \eqref{clt} and
\eqref{winston} 
by the Cram\'er--Slutsky theorem \cite[Theorem 5.11.4]{Gut}.

Finally, \eqref{gss1} can be written
\begin{align}
  3\gss = 
\E [(f_1(\XX)+\tfrac12f_2(\XX))^2] + \tfrac34\E [f_2(\XX)^2],
\end{align}
which implies that $\gss=0$ if and only if $f_1(\XX)=f_2(\XX)=0$ a.s.,
which completes the proof for the nondegenerate case \ref{TU1}.

\smallskip
We turn to the degenerate case $\gss=0$ in \ref{TU2}--\ref{TU2a}.
In this case we thus have $f_1(X_i)=f_2(X_i)=0$ a.s., and consequently
$\SI_n=\SII_n=0$ and, by \eqref{winston} again,
\begin{align}
  U_n(f)=\SQ_n.
\end{align}
We first treat the symmetric case \ref{TU2s}.

\pfitemref{TU2s}
We apply \refL{LTsymm} below,
noting that \eqref{lts00} holds by \eqref{a4} and the assumption that
$f_1(\XX)=0$ a.s.\ 
(and our simplifying assumption $f_{\emptyset}=\mu=0$ in this proof). 
This shows that $f(x,y)=f_{12}(x,y)$ has an orthogonal expansion
\eqref{lts1},
for some $R\le\infty$ and some orthonormal sequence of functions
$\gf_r\in\LLR(\cX)$, which furthermore satisfy
\eqref{lts0}, which is equivalent to $\E \gf_r(\XX)=0$.
The orthonormality means that
\begin{align}\label{kv1}
  \E[\gf_r(\XX)\gf_q(\XX)] =\int_{\cX}\gf_r(x)\gf_q(x)\dd\nu(x)=\gd_{rq}.
\end{align}
In other words, $(\gf_r(\XX))_1^R$ is a sequence of uncorrelated random
variables with mean 0 and variance 1.

Suppose first that $R<\infty$, so that the sum \eqref{lts1} is finite. Then
\eqref{U2}, the symmetry of $f$, and \eqref{lts1} yield
\begin{align}\label{kv2}
  2U_n(f)& 
= 2\sum_{1\le i<j\le n} f(X_i,X_j)
=\sum_{i,j=1}^n f (X_i,X_j) -\sumin f(X_i,X_i)
\notag\\&
=\sum_{i,j=1}^n\sumrR \gl_r \gf_r(X_i)\gf_r(X_j)
- \sumin \sumrR\gl_r\gf_r(X_i)^2
\notag\\&
=\sumrR\gl_r\biggpar{\Bigpar{\sumin\gf_r(X_i)}^2-\sumin\gf_r(X_i)^2}.
\end{align}
Now let \ntoo.
By the law of large numbers, for each $r$,
\begin{align}\label{kv3}
n\qw \sumin\gf_r(X_i)^2 \pto \E[\gf_r(\XX)^2]=1.
\end{align}
Furthermore, by the central limit theorem, since $\E[\gf_r(\XX)]=0$ and
$\E[\gf_r(\XX)^2]=1$ as remarked above,
\begin{align}\label{kv4}
n\qqw \sumin\gf_r(X_i) \dto \zeta_r \in N(0,1).
\end{align}
Moreover, since the variables $\gf_r(\XX)$ are uncorrelated, the limit in
\eqref{kv4} holds jointly for all $r\le R$, with the limits $\zeta_r$
uncorrelated and thus independent. Combining \eqref{kv2}--\eqref{kv4} yields
\begin{align}\label{kv5}
  2n\qw U_n(f)& 
=\sumrR\gl_r\biggpar{\Bigpar{n\qqw\sumin\gf_r(X_i)}^2-n\qw\sumin\gf_r(X_i)^2}
\notag\\&
\dto\sumrR\gl_r\bigpar{\zeta_r^2-1}.
\end{align}
This proves \eqref{tu2} when $R<\infty$.

If $R=\infty$, let, for $N\in\bbN$, 
\begin{align}\label{kv6}
  f_N(x,y):=\sum_{r=1}^N \gl_r \gf_r(x)\gf_r(y).
\end{align}
Then the case just proven applies to $f_N$, and thus, for each fixed $N<\infty$,
as \ntoo,
\begin{align}\label{kv7}
  2n\qw U_n(f_N)\dto \sum_{r=1}^N\gl_r\bigpar{\zeta_r^2-1}.
\end{align}
As \Ntoo, the \rhs{} converges to
$\sum_{r=1}^\infty\gl_r\bigpar{\zeta_r^2-1}$
in $L^2$ and a.s., and  in particular in distribution.
Furthermore, by \eqref{wba2} applied to $U_n(f-f_N)$,
\begin{align}\label{kv8}
  \E\bigsqpar{(U_n(f)-U_n(f_N))^2}&
=
  \Var\bigsqpar{U_n(f-f_N)}
=\tbinom n2 \Var\bigpar{(f-f_N)(X_1,X_2)}
\notag\\&
=\tbinom n2 \int_{\cX^2} 
\Bigpar{\sum_{N+1}^\infty\gl_r\gf_r(x)\gf_r(y)}^2\dd\nu(x)\dd\nu(y)
\notag\\&
=\tbinom n2 \sum_{N+1}^\infty \gl_r^2.
\end{align}
Hence, 
\begin{align}\label{kv9}
    \E\bigsqpar{(n\qw U_n(f)-n\qw U_n(f_N))^2}
\le \sum_{N+1}^\infty \gl_r^2
\to0
\end{align}
as \Ntoo, uniformly in $n$. 
The result \eqref{tu2} now follows from \eqref{kv7} and \eqref{kv9},
see \eg{} \cite[Theorem 4.2]{Billingsley}.

Finally, 
by \eqref{tu2} and \eqref{zz1},
\begin{align}
  \Var W = \sumrR\bigpar{\tfrac12\gl_r}^2\Var\bigpar{\zeta_r^2-1}
=\tfrac12\sumrR\gl_r^2
\end{align}
and thus
\eqref{tu22} holds by \eqref{lts2} in \refL{LTsymm}.

\pfitemref{TU2}
Now consider the general degenerate case, where $f_1=f_2=0$, and further
as above without loss of generality $f_\emptyset=\mu=0$, but no symmetry
assumption is made.
We use the following trick to reduce to the symmetric case.
(See \cite[Remark 11.21]{SJIII} for the case of general order $m$.)

Let $(Z_i)\xoo$ be an \iid{} sequence of random variables, 
independent of $(X_i)\xoo$, with each $Z_i$
uniformly distributed on $\oi$.
Consider the random variables $\hX_i:=(X_i,Z_i)$ in $\hcX:=\cX\times\oi$
and define the function $\hf:\hcX^2\to\bbR$ by \eqref{hff}.
Note that this definition makes $\hf$ a symmetric function on
$\hcX\times\hcX$.
Furthermore, if we condition on the sequence $(Z_i)_1^n$, and assume as we
may that $Z_1,\dots,Z_n$ are distinct, then,
letting $\pi$ be the permutation of \set{1,\dots,n} that makes
$Z_{\pi(1)}<\dots<Z_{\pi(n)}$,
\begin{align}
  U_n(\hf;\hX_1,\dots,\hX_n) 
= U_n(\hf;\hX_{\pi(1)},\dots,\hX_{\pi(n)})
= U_n(f;X_{\pi(1)},\dots,X_{\pi(n)}),
\end{align}
where the first equality holds by the symmetry of $\hf$
and the second by the definitions of $\hf$ (in \eqref{hff}) and $\pi$.
Consequently, conditioned on $(Z_i)_1^n$ we have
\begin{align}\label{eleo}
  U_n(\hf;\hX_1,\dots,\hX_n) \eqd U_n(f;X_{1},\dots,X_{n}),
\end{align}
and hence \eqref{eleo} holds also unconditionally.
The result now follows from \ref{TU2s} applied to $\hf$ and $(\hX_i)\xoo$;
note that this case applies since the definition \eqref{hff} implies that,
with definitions analogous to \eqref{a3}--\eqref{a6},
\begin{align}
\hf_\emptyset&=
\int_{\oi^2}\int_{X^2}\hf\bigpar{(x,t),(y,u)}\dd\nu(x)\dd\nu(y)\dd t\dd u
=\mu=0,
\\
  \hf_1(x,t)&=\int_{\hcX}\hf\bigpar{(x,t),(y,u)}\dd\nu(y)\dd u
=
\begin{cases}
f_1(x), & t<u
\\
f_2(x), & t>u\end{cases}
\biggr\}
=0
\end{align}
and, by symmetry, $\hf_2(x,t)=\hf_1(x,t)=0$.
Furthermore, 
\eqref{tu22} for $f$ follows from the same formula for $\hf$,
since
\eqref{ff} shows  that
$\Var[\hf(\hX_1,\hX_2)] =\Var[f(X_1,X_2)]$.

\pfitemref{TU2a}
This is proved directly in \cite[Theorem 2.1]{SJ22} by different methods,
relating $U_n$ in the asymmetric case to the stochastic area process.
We give here a different proof, by combining the general result
in \ref{TU2} with \refL{Lanti} below, which 
finds the eigenvalues $\gl_r$ of $T_\hf$ and
shows 
that the limit variable 
$ W:=\sumrR\tfrac12\gl_r(\zeta_r^2-1)$ in \eqref{tu2} also has the
representation \eqref{tu2a}. The formula \eqref{tu2a2} follows from
\eqref{tu22} and \eqref{lanti3}.
\end{proof}

\begin{lemma}\label{LTsymm}
If\/ $f\in L^2(\cX\times\cX)$ is real and symmetric, then
$T_f$ defined by \eqref{Tg}, i.e.,
\begin{align}\label{tf1}
  T_f g(x) := \int_{\cX}f(x,y)g(y)\dd\nu(y),
\end{align}
is a self-adjoint Hilbert--Schmidt operator
on $\LLC(\cX)$. 
Hence $T_f$ is compact and has thus at most countably many nonzero
eigenvalues, each of them real and each having a finite-dimensional eigenspace.
Let $(\gl_r)_{r=1}^R$ (where $0\le R\le\infty$) be an
enumeration of the nonzero eigenvalues (with  multiplicities) of $T_f$.
It is then possible to find 
a corresponding orthonormal sequence of real-valued eigenfunctions 
$(\gf_r)_1^R\in\LLR(\cX)$
such that $T_f\gf_r=\gl_r\gf_r$ for every $r$.

For any such $(\gl_r)_1^R$ and $(\gf_r)_1^R$,
$f(x,y)$ has a (finite or infinite) orthogonal expansion
\begin{align}\label{lts1}
  f(x,y) = \sumrR \gl_r\gf_r(x)\gf_r(y)
\end{align}
which converges in $L^2(\cX^2)$ because
\begin{align}\label{lts2}
\sumrR\gl_r^2 
= \int_{\cX^2}f(x,y)^2\dd\nu(x)\dd\nu(y)
<\infty
.\end{align}

Moreover, if
\begin{align}\label{lts00}
  \int_{\cX} f(x,y)\dd\nu(x)=0,
\qquad \text{for $\nu$-a.e. } y\in\cX,
\end{align}
then
\begin{align}\label{lts0}
  \int_{\cX} \gf_r(x)\dd\nu(x)=0,
\qquad \text{for every } r\le R.
\end{align}
\end{lemma}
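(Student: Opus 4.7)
The plan is to derive the lemma from the standard spectral theorem for self-adjoint compact operators, together with a Fourier expansion argument in $L^2(\cX^2)$.

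First I would verify that $T_f$ is self-adjoint on $\LLC(\cX)$: by Fubini and the symmetry $f(x,y)=f(y,x)$ together with the reality of $f$, we have $\innprod{T_fg,h}=\innprod{g,T_fh}$ for $g,h\in\LLC(\cX)$. Since $T_f$ is Hilbert--Schmidt (as cited after \eqref{Tg}), it is compact, and the spectral theorem supplies the countably many real nonzero eigenvalues $(\gl_r)_1^R$ with finite-dimensional eigenspaces. To obtain real-valued eigenfunctions, I would observe that if $T_f\gf=\gl\gf$ with $\gl$ real and $f$ real, then $\bar\gf$ satisfies the same equation, so the real and imaginary parts of $\gf$ both lie in the same eigenspace; applying Gram--Schmidt inside $\LLR(\cX)$ on each eigenspace yields the desired orthonormal sequence $(\gf_r)_1^R$.

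For the expansion \eqref{lts1}, I would extend $(\gf_r)_1^R$ to an orthonormal basis of $\LLR(\cX)$ by adjoining an orthonormal basis $(\psi_\ga)_{\ga\in A}$ of $\ker T_f$. The tensor products then form an orthonormal basis of $L^2(\cX^2)$, and I would compute the Fourier coefficients of $f$ in this basis by rewriting each as an inner product involving $T_f$. Using $T_f\gf_s=\gl_s\gf_s$ and $T_f\psi_\ga=0$, one finds $\innprod{f,\gf_r\tensor\gf_s}=\innprod{T_f\gf_s,\gf_r}=\gl_s\gd_{rs}$, while the three other families $\gf_r\tensor\psi_\ga$, $\psi_\ga\tensor\gf_s$, $\psi_\ga\tensor\psi_\gb$ all have vanishing coefficients because $T_f$ annihilates each $\psi_\ga$. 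This gives \eqref{lts1}, and Parseval's identity applied to this expansion yields \eqref{lts2}.

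Finally, to obtain \eqref{lts0} from \eqref{lts00}, I would integrate the eigenvalue equation $\gl_r\gf_r(x)=\int_\cX f(x,y)\gf_r(y)\dd\nu(y)$ over $x\in\cX$ and apply Fubini; the inner integral $\int_\cX f(x,y)\dd\nu(x)$ vanishes for a.e.\ $y$ by hypothesis, so $\gl_r\int_\cX\gf_r\dd\nu=0$, and $\gl_r\neq0$ yields the claim. The only step that requires any care is the expansion in the second paragraph—one must be careful to enumerate all four types of Fourier coefficients and verify that the self-adjointness identity, together with $\psi_\ga\in\ker T_f$, kills three of them; this is routine but is the one place where the full structure of the kernel/range decomposition of $T_f$ is used.
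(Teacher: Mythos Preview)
Your proposal is correct and follows essentially the same approach as the paper: self-adjointness and compactness from the symmetry and Hilbert--Schmidt property, the spectral theorem to obtain the eigenvalues and real eigenfunctions, extension to a full orthonormal basis via $\ker T_f$, computation of the Fourier coefficients of $f$ in the tensor basis to obtain \eqref{lts1}--\eqref{lts2}, and integration of the eigenvalue equation for \eqref{lts0}. The only cosmetic difference is that the paper indexes the kernel basis as additional $\gf_r$ with $\gl_r=0$ rather than separate $\psi_\ga$, so the four cases you distinguish are handled uniformly by the single formula $\innprod{f,\gf_r\tensor\gf_s}=\gl_s\gd_{rs}$.
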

Here and below, $r\le R$ should be interpreted as $r<\infty$ when $R=\infty$.
\begin{proof}
  Since $f\in L^2(\cX^2)$, \eqref{tf1} defines a bounded and compact linear
  operator $T_f$ on $\LLC(\cX)$; furthermore, $T_f$ is a Hilbert-Schmidt
  operator. 
Since furthermore $f$ is real and symmetric, $T_f$ is a
self-adjoint operator.
By the spectral theorem for compact and self-adjoint linear operators on a
Hilbert space
(see \eg{} \cite[Theorem 28.3]{Lax} or \cite[Theorem 6.4-B]{Taylor}),
$T_f$ has a finite or countably infinite set of
nonzero eigenvalues, each with finite multiplicity, so we may
arrange the nonzero
eigenvalues, with multiplicities, in a sequence $(\gl_r)_1^R$ with $R\le\infty$;
we denote the index set also by $\cR:=\set{r\in\bbN:r\le R}$.
Moreover, the eigenvalues $\gl_r$ are real, and 
there exists a corresponding orthonormal sequence of
eigenfunctions $(\gf_r)_1^R$, and this may be extended to an orthonormal
basis $(\gf_r)_{r\in\cR\cup\cN}$ where 
$\cN$ is a disjoint (possibly empty) index set
such that $T_f(\gf_r)=0$ for every $r\in\cN$, i.e., each $\gf_r$ is an
eigenfunction also for $r\in\cN$, with eigenvalue $\gl_r:=0$ when $r\in\cN$.

Furthermore, since $f$ is real, 
we may choose all $\gf_r$ to be in $\LLR(\cX)$.
(By \cite[Theorem 6.4-B]{Taylor} applied to $\LLR(\cX)$,
or by noting that 
since also the eigenvalues are real,
the real and imaginary parts of any eigenfunction of $T_f$ are also
eigenfunctions for the same eigenvalue; hence each eigenspace is spanned
by the real functions in it.)

Fix any such sequence $(\gf_r)_{r\in\cR}$ and extension $(\gf_r)_{r\in\cR\cup\cN}$.
Since $(\gf_r)_{r\in\cR\cup\cN}$ is an orthonormal basis in $L^2(\cX)$,
it is easily seen (and well-known) that 
if we define $g\tensor h(x,y):=g(x)h(y)$ for functions $g,h\in L^2(\cX)$, then
the set
$\set{\gf_r\tensor\gf_s:r,s\in\cR\cup\cN}$ is an orthonormal basis in
$L^2(\cX\times\cX)$. Hence,
\begin{align}\label{wt}
  f = \sum_{r,s\in\cR\cup\cN}\innprod{f,\gf_r\tensor\gf_s}\,{\gf_r\tensor\gf_s},
\end{align}
where the sum converges in $L^2$.
By Fubini's theorem and \eqref{tf1}, 
\begin{align}\label{wt1}
  \innprod{f,\gf_r\tensor\gf_s}&
=\iint_{\cX\times\cX}f(x,y)\gf_r(x)\gf_s(y)\dd\nu(x)\dd\nu(y)
=\int_{\cX}\gf_r(x)T_f(\gf_s)(x)\dd\nu(x)
\notag\\&
=\int_{\cX}\gf_r(x)\gl_s\gf_s(x)\dd\nu(x)
=\gl_s\innprod{\gf_r,\gf_s}
=\gl_s\gd_{rs}.
\end{align}
This vanishes unless $r=s\in\cR$, and thus \eqref{wt} simplifies to,
using \eqref{wt1} again, 
\begin{align}\label{wt2}
    f = \sum_{r\in\cR}\innprod{f,\gf_r\tensor\gf_r}\,{\gf_r\tensor\gf_r}
= \sum_{r\in\cR}\gl_r \,\gf_r\tensor\gf_r.
\end{align}
This is \eqref{lts1}, and \eqref{lts2} follows because
$\set{\gf_r\tensor\gf_s}$ is an orthonormal basis.

Finally, if \eqref{lts00} holds, then for every $r\in\cR$,
by \eqref{tf1} and Fubini's theorem,
\begin{align}
  \gl_r\int_{\cX}\gf_r(x)\dd\nu(x)&
=
  \int_{\cX}T_f(\gf_r)(x)\dd\nu(x)
=
  \iint_{\cX\times\cX}f(x,y)\gf_r(y)\dd\nu(y)\dd\nu(x)
\notag\\&
=\int_{\cX} \gf_y(y) \int_{\cX} f(x,y)\dd\nu(x) \dd\nu(y)=0.
\end{align}
Since $\gl_r\neq0$ for $r\in\cR$, \eqref{lts0} follows.
\end{proof}

\begin{lemma}\label{Lanti}
  Let $f\in L^2(\cX\times\cX)$ be real and antisymmetric.
The the operator $T_f$ on $\LLC(\cX)$
is anti-self-adjoint and has purely imaginary eigenvalues.
Let $(\gll_q)_{q\in\cQp}$ be an enumeration of the positive real numbers
such that\/ 
$\ii\gll_q$ is an eigenvalue of $T_f$ (counted with multiplicities).
Then the multiset of nonzero eigenvalues $(\gl_r)_1^R$
of the self-adjoint operator $T_{\hf}$ on
$L^2(\cX\times\oi)$ (counted with multiplicities)
 equals
\begin{align}\label{lanti1}
\Bigset{\pm\frac{2}{(2k-1)\pi}\gll_q:q\in\cQp,k\in\bbN}\quad
\text{with each  pair $(q,k)$ counted twice.}
\end{align}
As a consequence, if $\zeta_r$ are \iid{} standard normal variables and
$\eta_q$ are independent random variables with the stochastic area
distribution \eqref{area}, then
\begin{align}\label{lanti}
  \sumrR \tfrac12\gl_r(\zeta_r^2-1) \eqd \sum_{q\in\cQp} \gll_q\eta_q.
\end{align}
Furthermore,
\begin{align}\label{lanti3}
  \sumrR\gl_r^2
=2\sumqQp(\gll_q)^2
= \int_{\cX^2}f(x,y)^2\dd\nu(x)\dd\nu(y)
<\infty
.\end{align}
\end{lemma}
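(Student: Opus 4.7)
My approach will exploit a tensor decomposition of $\hf$. Since $f$ is real and antisymmetric, a direct computation gives $T_f^* = -T_f$ on $\LLC(\cX)$, so $T_f$ is anti-self-adjoint; equivalently, $\ii T_f$ is a self-adjoint Hilbert--Schmidt operator, which has a complete orthonormal system of eigenfunctions with real eigenvalues, and therefore $T_f$ has purely imaginary eigenvalues. Because $f$ is real-valued, complex conjugation commutes with $T_f$, so the nonzero eigenvalues of $T_f$ occur in conjugate pairs $\pm\ii\gll_q$ with equal multiplicities, indexed by $\cQp$. Since $f$ is antisymmetric, the definition \eqref{hff} can be rewritten as
\begin{align}
  \hf\bigpar{(x,t),(y,u)} = f(x,y)\sgn(u-t),
\end{align}
so $\hf = f\tensor H$ with $H(t,u):=\sgn(u-t)$ on $\oi^2$. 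By \eqref{tensor} this yields $T_\hf = T_f\tensor T_H$, and the tensor product principle recalled after \eqref{tensor} (applied to orthonormal eigenbases of $T_f$ and $T_H$, which exist since both are normal compact operators) gives that the nonzero eigenvalues of $T_\hf$ are the products of nonzero eigenvalues of $T_f$ and $T_H$, with multiplicities multiplied.

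I next compute the nonzero eigenvalues of $T_H$ on $L^2(\oi)$. Since $H$ is real antisymmetric, $T_H$ is anti-self-adjoint with purely imaginary eigenvalues $\ii\rho$. If $T_H g = \ii\rho g$ with $\rho\ne0$, then
\begin{align}
  \ii\rho\, g(t) = \int_t^1 g(u)\dd u - \int_0^t g(u)\dd u,
\end{align}
from which $g$ is continuously differentiable and $\ii\rho\,g'(t)=-2g(t)$. Hence $g(t)=Ae^{2\ii t/\rho}$, and evaluating the integral equation at $t=0$ and $t=1$ forces $g(0)+g(1)=0$, i.e., $e^{2\ii/\rho}=-1$. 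Thus $\rho=\pm 2/((2k-1)\pi)$ for $k\in\bbN$, all simple. Combining with the conjugate pairs for $T_f$, the four sign choices in $(\pm\ii\gll_q)\bigpar{\pm\ii\cdot 2/((2k-1)\pi)}$ produce the value $+2\gll_q/((2k-1)\pi)$ twice and the value $-2\gll_q/((2k-1)\pi)$ twice, which is precisely the multiset \eqref{lanti1}.

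For the two consequences, I would group the terms of $\sumrR\tfrac12\gl_r(\zeta_r^2-1)$ by the pair $(q,k)$; the four centred squares attached to these four eigenvalues combine, with the constants $-1$ cancelling between the $+$ and $-$ eigenvalues, to give
\begin{align}
  \frac{\gll_q}{(2k-1)\pi}\bigpar{\zeta_{q,k,1}^2+\zeta_{q,k,2}^2
-\zeta_{q,k,3}^2-\zeta_{q,k,4}^2}.
\end{align}
Summing over $k\ge1$ and applying \eqref{larea} yields a copy of $\gll_q\eta_q$, and independence across $q$ gives \eqref{lanti}. For \eqref{lanti3}, I compute
\begin{align}
  \sumrR \gl_r^2 = 4\sumqQp\sumk \Bigparfrac{2\gll_q}{(2k-1)\pi}^2
= \frac{16}{\pi^2}\sumqQp\gll_q^2\cdot\sumk\frac{1}{(2k-1)^2}
= 2\sumqQp\gll_q^2,
\end{align}
using $\sumk(2k-1)\qww=\pi^2/8$, while the Hilbert--Schmidt identity for $T_f$ combined with the conjugate-pair structure gives $\int_{\cX^2}|f|^2 = 2\sumqQp\gll_q^2$ as well. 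The one point that requires care is the fourfold multiplicity bookkeeping in the tensor product together with the cancellation of the $-1$ terms; neither step is technically hard, but attention is needed to ensure that all four sign combinations in $(\pm\ii\gll_q)\bigpar{\pm\ii\cdot 2/((2k-1)\pi)}$ are correctly charged to the atoms of \eqref{lanti1} and to the independent copies of $\weta$ appearing in \eqref{larea}.
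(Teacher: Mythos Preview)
Your proof is correct and follows essentially the same approach as the paper: factor $\hf=f\tensor H$ with $H(t,u)=\sgn(u-t)$, use that $T_\hf=T_f\tensor T_H$ has as nonzero eigenvalues the products of the nonzero eigenvalues of the two factors, compute the eigenvalues $\pm 2\ii/((2k-1)\pi)$ of $T_H$ (the paper isolates this as a separate \refL{L3}, while you do it inline), and then count multiplicities and invoke \eqref{larea} and the Hilbert--Schmidt identity exactly as the paper does.
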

\begin{proof}
In the antisymmetric case, 
we  can write \eqref{hff} as
\begin{align}\label{er1}
  \hf\bigpar{(x,t),(y,u)} = f(x,y)\sgn(u-t)
=:f(x,y)h(t,u),
\end{align}
where $\sgn$ is the sign function \eqref{sgn}.
Thus, in tensor notation, see \eqref{tensor},
$\hf=f\tensor h$ and $T_\hf=T_f\tensor T_h$.

The functions $f(x,y)$ and $h(t,u)=\sgn(u-t)$ in \eqref{er1} 
are both real-valued and
antisymmetric, and thus the corresponding 
Hilbert--Schmidt
integral operators $T_f$ and $T_h$
(acting on $\LLC(\cX)$ and $\LLC\oi$, respectively)  are both anti-self-adjoint.
Hence, $-\ii T_f$ and $-\ii T_h$ are self-adjoint, and it follows from the
spectral theorem, as in the proof of \refL{LTsymm},
that $-\ii T_f$ and
$-\ii T_h$ have only real eigenvalues $\set{\gll_q:q\in\cQ\cup\cN}$ and
$\set{\rho_s:s\in\cS\cup\cN'}$, respectively,
with $\gll_q\neq0\iff q\in\cQ$
and $\rho_s\neq0\iff s\in\cS$
and that there are corresponding families
of eigenfunctions $\set{\gf_q:q\in\cQ\cup\cN}$ and
$\set{\psi_s:s\in\cS\cup\cN'}$ which are orthonormal bases in $\LLC(\cX)$
and $\LLC\oi$, respectively. (However, unlike in \refL{LTsymm},
these eigenfunctions are not real-valued.)
Hence, these functions are eigenfunctions for $T_f$ and $T_h$ too, with
eigenvalues $\ii\gll_q$ and $\ii\rho_s$, respectively.
(The eigenvalues and eigenfunctions for $T_h$ will be found explicitly in
\refL{L3}.)

It follows that the set of all functions
$\gf_q\tensor\psi_s(x,t):=\gf_q(x)\psi_s(t)$ is an 
orthonormal basis in $\LLC(\cX\times\oi)$.
Furthermore,  
as noted in \refS{Sprel},
the function $\gf_q\tensor\psi_s$ 
is an eigenfunction of $T_\hf=T_f\tensor T_h$
with eigenvalue $-\gll_q\rho_s$. Since these functions form a basis, it
follows that the set of eigenvalues of $T_\hf$, with multiplicities, is
$\set{-\gll_q\rho_s:q\in\cQ\cup\cN, s\in\cS\cup\cN'}$.
In particular, the nonzero eigenvalues $(\gl_r)_1^R$ are
\begin{align}\label{er3}
  \set{-\gll_q\rho_s:q\in\cQ, s\in\cS}.
\end{align}

Recall that the nonzero
eigenvalues of $T_f$ are $\set{\ii\gll_q}_{q\in\cQ}$, where $\gll_q\in\bbR$. 
Since $f$ is real, the complex conjugate $\overline{\gf_q}$ is also an
eigenfunction, with eigenvalue $\overline{\ii\gll_q}=-\ii\gll_q$.
It follows that if we let $\cQ_+:=\set{q:\gll_q>0}$
and $\cQ_-:=\set{q:\gll_q<0}$, then 
$\set{\gll_q:q\in\cQ_-}=\set{-\gll_q:q\in\cQ_+}$.
Consequently, we may rewrite \eqref{er3} as
\begin{align}\label{er3b}
  \set{\gl_r:r\le R} = 
  \set{\pm\gll_q\rho_s:q\in\cQp, s\in\cS}.
\end{align}

We now use \refL{L3}, which shows that the eigenvalues $\ii\rho_s$ are 
\begin{align}\label{er3c}
\Bigset{\pm\frac{2\ii}{(2k-1)\pi}: k\in\bbN}. 
\end{align}
Hence, \eqref{lanti1} follows
from \eqref{er3b}, noting that for each pair $(q,k)$, there are two choices
of signs in \eqref{er3b} and \eqref{er3c} that yield the same $\gl_r$.

Note that each pair $(q,k)$ thus yields 4 eigenvalues in \eqref{lanti1},
2 of each sign.
Hence, it follows from \eqref{lanti1} that, 
with $\zeta_{q,k,j}\in N(0,1)$ independent,
\begin{align}\label{lanti=}
  \sumrR \tfrac12\gl_r(\zeta_r^2-1) 
\eqd\sum_{q\in\cQp}\gll_q \sumk \frac{1}{(2k-1)\pi} 
\bigpar{\zeta^2_{q,k,1}+\zeta^2_{q,k,2}-\zeta^2_{q,k,3}-\zeta^2_{q,k,4}}.
\end{align}
Consequently, \eqref{lanti} follows from \refL{Larea}.
Finally, \eqref{lanti3} follows from \eqref{lanti1} (or from
\eqref{er3b}--\eqref{er3c}) which yields
\begin{align}
  \sumrR\gl_r^2
=
4\sum_{q\in\cQp}\sumk\Bigpar{\frac{2}{(2k-1)\pi}\gll_q}^2
=
\frac{16}{\pi^2}\sumk\frac{1}{(2k-1)^2}\sum_{q\in\cQp}(\gll_q)^2
=2\sum_{q\in\cQp}(\gll_q)^2
,\end{align}
together with \eqref{lts2} for $T_\hf$
(or the corresponding formula for the self-adjoint operator $T_{\ii f}$).
\end{proof}

\begin{lemma}\label{L3}
Let $h(t,u):=\sgn(t-u)$. Then the anti-self-adjoint operator 
$T_h$ acting on 
$\LLC\oi$ has
eigenvalues, all simple,
\begin{align}\label{l3}
{\pm \frac{2\ii}{(2k-1)\pi}, \qquad k=1,2,3,\dots}  
.\end{align}
\end{lemma}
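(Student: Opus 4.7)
The plan is to mimic the proofs of \refLs{LAs} and \ref{LAa} above, with the simplification that we now have a single unknown function instead of a pair. An eigenfunction $\gf$ of $T_h$ with eigenvalue $\gl$ satisfies
\begin{align}
\gl \gf(t) = \int_0^t \gf(u)\dd u - \int_t^1 \gf(u)\dd u.
\end{align}
Assume $\gl \neq 0$. The right-hand side is absolutely continuous in $t$, so $\gf$ agrees a.e.\ with a continuous function; bootstrapping, $\gf$ is continuously differentiable. Differentiation yields $\gl \gf'(t) = 2\gf(t)$, and setting $\go := 2/\gl$ gives the first-order ODE $\gf' = \go\gf$, whose solutions are $\gf(t) = A e^{\go t}$ for some constant $A$.

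Next I would extract the boundary condition. Setting $t=0$ and $t=1$ in the eigenvalue equation yields $\gl\gf(0) = -\intoi \gf(u)\dd u$ and $\gl\gf(1) = \intoi\gf(u)\dd u$, whence $\gf(0) + \gf(1) = 0$. For $\gf(t) = A e^{\go t}$ this forces $1 + e^{\go} = 0$, i.e., $e^{\go} = -1$, so $\go = \ii(2k-1)\pi$ for some $k\in\bbZ$. The corresponding eigenvalue is $\gl = 2/\go = -2\ii/((2k-1)\pi)$, and letting $k$ range over $\bbZ$ produces exactly the list \eqref{l3} (the $\pm$ arises because the values $k$ and $1-k$ give $\gl$ of opposite signs).

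Conversely, for each such $\go$ the function $e^{\go t}$ is a genuine eigenfunction (verifiable by direct substitution into the integral equation), and since the ODE $\gf' = \go\gf$ has a one-dimensional solution space, each eigenvalue is simple. To exclude $\gl = 0$, note that in that case the two integrals on the \rhs{} of the eigenvalue equation would be equal for a.e.\ $t$; differentiating in $t$ forces $\gf = 0$ a.e. There is no real obstacle beyond routine computation: the lemma is essentially a scalar analogue of the block matrix eigenvalue problems solved in \refLs{LAs} and \ref{LAa}, and the argument is shorter because no coupled system needs to be diagonalized.
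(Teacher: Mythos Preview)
Your proof is correct and follows essentially the same route as the paper's: write the eigenvalue equation as an integral equation, bootstrap to smoothness, differentiate to obtain a first-order ODE with exponential solutions, extract the boundary condition $\gf(0)+\gf(1)=0$ from the endpoints, and solve $e^{\go}=-1$. The only cosmetic difference is that you work with $h(t,u)=\sgn(t-u)$ as in the lemma statement, whereas the paper's own proof (apparently by a sign slip between statement and proof) computes with $\sgn(u-t)$; since the eigenvalue set \eqref{l3} is invariant under negation this is immaterial, and your parametrization $\go=2/\gl$ is if anything slightly cleaner than the paper's $\gf(t)=Ce^{\ii\go t}$ with $\go=2\ii/\gl$.
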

\begin{proof}
  Suppose that $\gf$ is an eigenfunction with eigenvalue $\gl$.
Then, for a.e.\ $t\in\oi$,
  \begin{align}\label{sw1}
    \gl \gf(t) = T_h\gf(t) = \intoi \sgn(u-t)\gf(u)\dd u
=-\int_0^t\gf(u)\dd u +\int_t^1\gf(u)\dd u
  \end{align}
Suppose first that $\gl\neq0$.
The \rhs{} of \eqref{sw1} is a continuous function of $t\in\oi$, and thus $\gf$
can be assumed to be continuous. 
Then \eqref{sw1} holds for every $t\in\oi$, and 
the \rhs{} of \eqref{sw1} is continuously differentiable in $(0,1)$;
thus $\gf$ is continuously differentiable on $\oi$.
Taking the derivative in \eqref{sw1} yields
\begin{align}
\gl\gf'(t)=-2\gf(t)
\end{align}
and thus (for some irrelevant $C\neq0$)
\begin{align}\label{sw3}
  \gf(t)=Ce^{i\go t}
\qquad\text{with}\qquad 
\go=2\ii/\gl.
\end{align}
Taking $t=0$ and $1$ in \eqref{sw1} yields
\begin{align}
  \gl\gf(0)=\intoi\gf(u)\dd u=-\gl\gf(1),
\end{align}
i.e., $\gf(1)=-\gf(0)$. This and \eqref{sw3} yield
$e^{\ii\go}=-1$, and thus
\begin{align}\label{sw5}
  \go=\pm(2k-1)\pi,
\qquad k=1,2,\dots.
\end{align}
Conversely, it is easily checked that each such $\go$ gives an eigenfunction
$\gf$ by \eqref{sw3}, satisfying \eqref{sw1} with eigenvalue
\begin{align}
  \gl=\frac{2\ii}{\go}=\pm\frac{2\ii}{(2k-1)\pi}.
\end{align}
These are thus the nonzero eigenvalues, and we see from \eqref{sw3} that
they are simple.

Finally, if \eqref{sw1}  holds with $\gl=0$, then $\int_0^t\gf(u)\dd u$ is
constant, and thus $\gf(t)=0$ a.e. Hence, $0$ is not an eigenvalue.
(Equivalently, the eigenfunctions $e^{\pm(2k-1)\pi\ii}$ form an orthonormal
basis on $\oi$, as is well-known from Fourier analysis.)
\end{proof}

\section{Cumulants}\label{Acumulants}
The limit distributions in our theorems are, apart form the normal
distribution, given by (possibly infinite) linear combinations of
independent copies of the variables $\zeta^2-1$, $\eta$, and $\xeta$ in
\refSS{SS3}. We give here some simple results on the cumulants of such sums;
we denote the cumulants of a random variable $Y$ by $\gk_m(Y)$, where
$m\ge1$.
We write
for convenience $\chi:=\zeta^2-1$.

The cumulants $\gk_m(\chi)$, $\gk_m(\eta)$, and $\gk_m(\xeta)$ 
are (by definition) obtained by
Taylor expansions of the logarithms of the
\chf{s} of $\chi$, $\eta$, and $\xeta$ given in \eqref{zz2}
\eqref{area}, and \eqref{xeta};
note that these \chf{s} and their logarithms are analytic functions of $t$ in a
neighbourhood of $0$. This yields
for $m\ge2$ the cumulants, using \cite[4.19.8, 4.28.9, and 24.2.2]{NIST}
for \eqref{kk2}--\eqref{kk3},
\begin{align}\label{kk1}
  \gk_m(\chi) &=2^{m-1}(m-1)!,
\\\label{kk2}
\gk_m(\eta)&=(-1)^{1+m/2}\frac{2^m(2^m-1)}{m}B_m
=\frac{2^m(2^m-1)}{m}|B_m|,
\\\label{kk3}
\gk_m(\xeta)&=\tfrac12\gk_m(\eta)=(-1)^{1+m/2}\frac{2^{m-1}(2^m-1)}{m}B_m
=\frac{2^{m-1}(2^m-1)}{m}|B_m|,
\end{align}
where $B_m$ denotes the Bernoulli numbers \cite[Chapter 24]{NIST}.
For $m=1$, we have $\gk(\chi)_1=\gk(\eta)_1=\gk(\xeta)_1=0$. (These are
just the means.) Note that thus $\gk_m(\eta)=\gk_m(\xeta)=0$ for all odd $m$,
which reflects the fact that $\eta$ and $\xeta$ have symmetric
distributions.

Sums of the type
$\sumrR\gl_r\chi_r$, $\sumrR\gl_r\eta_r$, and $\sumrR\gl_r\xeta_r$
(where $\chi_r$ are independent copies of $\chi$, and so on)
 appear frequently above; they
have cumulants that can be expressed in terms of the sums
$\sumrR\gl_r^m$,
since the cumulant of a sum of independent variables equals the sum of their
cumulants. 
Hence, for any finite or infinite sequence $(\gl_r)_1^R$ with
$\sumrR\gl_r^2<\infty$, 
\begin{align}
  \label{kk4}
\gk_m\Bigpar{\sumrR\gl_r\chi_r}
=\sumrR\gl_r^m\cdot\gk_m(\chi),
\end{align}
with $\gk_m(\chi)$ given by \eqref{kk1}, and similarly for 
$\sumrR\gl_r\eta_r$ and $\sumrR\gl_r\xeta_r$.

For example, since $\eta\eqd\xeta_1+\xeta_2$, we have
$\gk_m(\eta)=2\gk_m(\xeta)$ for all $m\ge1$, as is seen in
\eqref{kk2}--\eqref{kk3}.
Similarly, since \eqref{larea0} can be written 
\begin{align}
  \xeta\eqd
\sum_{k=-\infty}^\infty\frac{1}{(2k-1)\pi} \chi_k,
\end{align}
we have, for all even $m\ge2$, 
using the standard formula \cite[25.6.2]{NIST} for
$\zeta(m)$,
\begin{align}
\gk_m(\xeta)&=
\sum_{k=-\infty}^\infty\frac{1}{((2k-1)\pi)^m}\gk_m(\chi)
=\pi^{-m}2\bigpar{1-2^{-m}}\zeta(m)\gk_m(\chi)
\notag\\&
=\frac{2^{m}\bigpar{1-2^{-m}}}{m!}|B_{m}|\gk_m(\chi)
=\frac{2^{m}\bigpar{1-2^{-m}}}{m!}|B_{m}|2^{m-1}(m-1)!,
\end{align}
which agrees with (and thus gives a proof of) \eqref{kk3} and \eqref{kk2}.
(Recall that the odd cumulants of $\zeta$ and $\eta$ are 0.)

For a final example, the limit distribution $\sumk\frac{1}{\pi k}\eta_k$
in \eqref{ez8} has cumulants, using \eqref{kk2} and, again, \cite[25.6.2]{NIST},
\begin{align}
\gk_m\Bigpar{\sumk\frac{1}{\pi k}\eta_k}
&=
\sumk\frac{1}{(\pi k)^m}\gk_m(\eta)
=\pi^{-m}\zeta(m)\gk_m(\eta)
=\frac{2^{2m}(2^m-1)}{2m\cdot m!}B_{m}^2,
\end{align}
which agrees with the cumulants given (implicitly) in 
\cite[Corollary 2]{writhe}. (Note that the variable $W$ there is twice as
big, because of different normalizations.)


\section{A proof of \eqref{qj1}--\eqref{york}}\label{AYor}
We give here,
as another illustration of the theorems and methods in the paper, 
a proof of \eqref{york} and thus \eqref{yor} using \refT{TU}
and eigenvalue calculations; this is hardly new, but we do not know a
reference. We omit some details.

As said in \refR{RWP}, 
we leave it as an open problem to do similar calculations for
the operator \eqref{ta21} in \refT{TA}, which ought to lead to 
an explicit (more or less complicated) formula for the joint \chf{} of the
limits in distribution of $n\qw\Upm_n(\fs)$ and $n\qw\Upm_n(\fa)$, 

Let $\fs$ and $\fa$ be as in \eqref{ewis}--\eqref{ewia}, and let
$s,\tau\in\bbR$.
(We use $\tau$ here, since we want to use $t\in I$ as one of the
coordinates in $\cX\times\oi$.)
Take, suppressing the argument 
$\xpar{(x_1,x_2),(x_1',x_2')}$,
\begin{align}\label{wp1}
  f\st
:=s\cdot2\fs+\tau\cdot2\fa = ax_1x_2'+bx_2x_1',
\end{align}
where $a:=s+\tau$, $b:=s-\tau$.
Similarly as in \eqref{ewi5}--\eqref{ewi7}, we see that an eigenfunction of
$T_\hfst$ with nonzero eigenvalue $\gl$ has to be of the form
$x_1\psi_1(t)+x_2\psi_2(t)$, and the eigenvalue equation is equvalent to the
system
\begin{align}\label{wp21}
  \gl\psi_1(t)&=a\int_t^1\psi_2(u)\dd u  + b\int_0^t\psi_2(u)\dd u, 
\\\label{wp22}
  \gl\psi_2(t)&=b\int_t^1\psi_1(u)\dd u  + a\int_0^t\psi_1(u)\dd u. 
\end{align}
This is in turn equivalent to the system of differential equations
\begin{align}\label{wp31}
  \gl\psi'_1(t)&=(b-a)\psi_2(t),
\\\label{wp32}
  \gl\psi'_2(t)&=(a-b)\psi_1(t),
\end{align}
with the initial values 
\begin{align}\label{wp33}
  \gl\psi_1(0)=a\intoi\psi_2(u)\dd u,
\qquad
  \gl\psi_2(0)=b\intoi\psi_1(u)\dd u
.\end{align}
Assume $s\tau\neq0$, or equivalently $a\neq\pm b$. (This excludes the cases
$\fs$ and $\fa$ already studied, which are somewhat special.)
Let
\begin{align}\label{wp4}
\go= \frac{a-b}{\gl}. 
\end{align}
The general solution to \eqref{wp31}--\eqref{wp32} then is,
for some real (or complex)  $A$ and $B$,
\begin{align}\label{wp51}
  \psi_1(t)&=A\cos(\go x)+B\sin(\go x),
\\\label{wp52}
  \psi_2(t)&=A\sin(\go x)-B\cos(\go x),
\end{align}
and \eqref{wp33} yields, 
using \eqref{wp4} and \eqref{wp31}--\eqref{wp32}, the conditions
\begin{align}\label{wp61}
  (a-b)\psi_1(0)&=\go\gl\psi_1(0)=a\go\intoi\psi_2(u)\dd u
=a\psi_1(0)-a\psi_1(1),
\\\label{wp62}
  (a-b)\psi_2(0)&=\go\gl\psi_2(0)=b\go\intoi\psi_1(u)\dd u
=b\psi_2(1)-b\psi_2(0),
\end{align}
which simplify to
\begin{align}\label{wp7}
  b\psi_1(0)=a\psi_1(1),
\qquad
a\psi_2(0)=b\psi_2(1)
\end{align}
or
\begin{align}\label{wp81}
  bA&=a(A\cos\go+B\sin\go),
\\\label{wp82}
-aB&=b(A\sin\go-B\cos\go).
\end{align}
Regarding \eqref{wp81}--\eqref{wp82} as a system of linear equations in
$(A,B)$, it follows that there is a solution to \eqref{wp21}--\eqref{wp22},
and thus an eigenvalue $\gl$, if and only if the 
determinant of the system \eqref{wp81}--\eqref{wp82} is 0, i.e., if
\begin{align}\label{wp9}
0= (a\cos\go-b)(a-b\cos\go)-ab\sin^2\go=(a^2+b^2)\cos\go-2ab.
\end{align}
Furthermore, since we assume $a\neq\pm b$, it is easily seen that then
the system \eqref{wp81}--\eqref{wp82} has rank 1 and thus a one-dimensional
space of solutions $(A,B)$; hence, the eigenvalue $\gl$ is simple.
Let 
\begin{align}\label{goo}
  \goo:=\arccos\frac{2ab}{a^2+b^2}.
\end{align}
Then the complete set of solutions $\go$ to \eqref{wp9} is 
$\set{\pm\goo+2k\pi}$,
$k\in\bbZ$, and hence, by \eqref{wp4}, 
the nonzero eigenvalues of $T_\hfst$ are (all simple)
\begin{align}
  \label{wp10}
 \pm\frac{a-b}{\goo+2k\pi}
= \pm\frac{2\tau}{\goo+2k\pi}, 
\qquad k\in\bbZ.
\end{align}
Consequently, \eqref{tu2} and \eqref{zz2} yield
\begin{align}\label{wp91}
  n\qw U_n(s\cdot2\fs+\tau\cdot2\fa)
=n\qw U_n(f\st) \dto W\st
\end{align}
where,
using also the product expansion for cosine \cite[4.22.2]{NIST},
\begin{align}\label{wp11}
  \E[e^{\ii W\st}]&
=\prod_{\gl}e^{-\ii\gl/2}(1-\ii\gl)\qqw
=\prod_{k=-\infty}^\infty
\Bigpar{
\Bigpar{1-\frac{2\ii \tau}{\goo+2\pi k}}
\Bigpar{1+\frac{2\ii \tau}{\goo+2\pi k}}
}\qqw
\notag\\&
=\prod_{k=-\infty}^\infty
\Bigabs{1+\frac{2\ii \tau}{\goo+2\pi k}}
\qw
=\prod_{k=-\infty}^\infty
\lrabs{
\frac{1+\frac{\goo+\pi}{(2k-1)\pi}}
{1+\frac{\goo+\pi+2\ii\tau}{(2k-1)\pi}}}
\notag\\&
=\prod_{k=1}^\infty
\lrabs{
\frac{1-\frac{(\goo+\pi)^2}{(2k-1)^2\pi^2}}
{1-\frac{(\goo+\pi+2\ii\tau)^2}{(2k-1)^2\pi^2}}}
=\Bigabs{\frac{\cos(\goo/2+\pi/2)}{\cos(\goo/2+\pi/2+\ii\tau)}}
=\Bigabs{\frac{\sin(\goo/2)}{\sin(\goo/2+\ii\tau)}}
\notag\\&
=\Bigabs{\frac{\sin(\goo/2)}
  {\sin(\goo/2)\cosh(\tau)+\ii \cos(\goo/2)\sinh(\tau)}}
\notag\\&
=\bigpar{\cosh^2(\tau)+\cot^2(\goo/2)\sinh^2(\tau)}
\qqw
.\end{align}
Furthermore, by \eqref{goo},
\begin{align}\label{wp12}
  \cot^2(\goo/2)=\frac{1+\cos(\goo)}{1-\cos(\goo)}
=\frac{(a+b)^2}{(a-b)^2}=\frac{s^2}{\tau^2}
.\end{align}
It follows from \eqref{wp91}--\eqref{wp12} that \eqref{qj1} and \eqref{qj2}
hold jointly, with limits $\Ws$ and $\Wa$ having the joint \chf{}
\eqref{york}.
(The cases $s=0$ or $\tau=0$, implicit in \eqref{qj2} and \eqref{qj1}, 
follow by continuity.)

\newcommand\AAP{\emph{Adv. Appl. Probab.} }
\newcommand\JAP{\emph{J. Appl. Probab.} }
\newcommand\JAMS{\emph{J. \AMS} }
\newcommand\MAMS{\emph{Memoirs \AMS} }
\newcommand\PAMS{\emph{Proc. \AMS} }
\newcommand\TAMS{\emph{Trans. \AMS} }
\newcommand\AnnMS{\emph{Ann. Math. Statist.} }
\newcommand\AnnPr{\emph{Ann. Probab.} }
\newcommand\CPC{\emph{Combin. Probab. Comput.} }
\newcommand\JMAA{\emph{J. Math. Anal. Appl.} }
\newcommand\RSA{\emph{Random Structures Algorithms} }
\newcommand\DMTCS{\jour{Discr. Math. Theor. Comput. Sci.} }

\newcommand\AMS{Amer. Math. Soc.}
\newcommand\Springer{Springer-Verlag}
\newcommand\Wiley{Wiley}

\newcommand\vol{\textbf}
\newcommand\jour{\emph}
\newcommand\book{\emph}
\newcommand\inbook{\emph}
\def\no#1#2,{\unskip#2, no. #1,} 
\newcommand\toappear{\unskip, to appear}

\newcommand\arxiv[1]{\texttt{arXiv}:#1}
\newcommand\arXiv{\arxiv}

\newcommand\xand{and }
\renewcommand\xand{\& }

\def\nobibitem#1\par{}

\end{document}